\documentclass[11pt]{amsart} 
\usepackage{amsmath, amsfonts}
\usepackage{amsthm}
\usepackage{amssymb}
\usepackage[centering, vcentering, marginratio=1:1, vscale=0.8, hscale=0.7, letterpaper]{geometry}
\usepackage{url} 
\usepackage[colorlinks,  citecolor=black, linkcolor = black, urlcolor = black]{hyperref} 
\usepackage{mathtools}
\usepackage{tikz-cd}
\usepackage{stmaryrd}
\usepackage{mathrsfs}
\usepackage{derivative}
\usepackage{enumerate}


\theoremstyle{plain}
\newtheorem{thm}{Theorem}[section]
\newtheorem{prop}[thm]{Proposition}
\newtheorem{cor}[thm]{Corollary}
\newtheorem{lem}[thm]{Lemma}

\theoremstyle{definition}
\newtheorem{defn}[thm]{Definition}

\newtheorem{defn-prop}[thm]{Definition-Proposition}

\theoremstyle{remark}
\newtheorem{remark}[thm]{Remark}

\DeclareMathOperator{\Hom}{Hom}

\DeclareMathOperator{\Spec}{Spec}

\DeclareMathOperator{\Proj}{Proj}


\DeclareMathOperator{\an}{an}
\DeclareMathOperator{\ma}{MA}

\DeclareMathOperator{\vol}{vol}

\DeclareMathOperator{\Val}{Val}

\DeclareMathOperator{\triv}{triv}

\DeclareMathOperator{\qm}{qm}

\DeclareMathOperator{\fs}{\mathrm{FS}}
\DeclareMathOperator{\cpsh}{\mathrm{CPSH}}
\DeclareMathOperator{\dcpsh}{\mathrm{d-CPSH}}
\DeclareMathOperator{\psh}{\mathrm{PSH}}

\DeclareMathOperator{\I}{I}
\DeclareMathOperator{\J}{J}
\DeclareMathOperator{\E}{E}



\newcommand{\GG}{\mathbb{G}}

\newcommand{\NN}{\mathbb{N}}

\newcommand{\QQ}{\mathbb{Q}}
\newcommand{\RR}{\mathbb{R}}

\newcommand{\TT}{\mathbb{T}}

\newcommand{\ZZ}{\mathbb{Z}}

\newcommand{\kk}{\Bbbk}


\newcommand{\cE}{\mathcal{E}}
\newcommand{\cF}{\mathcal{F}}

\newcommand{\cH}{\mathcal{H}}

\newcommand{\cM}{\mathcal{M}}

\newcommand{\fm}{\mathfrak{m}}

\newcommand{\ft}{\mathfrak{t}}

\newcommand{\la}{\langle}
\newcommand{\ra}{\rangle}

\raggedbottom

\title{Some Non-Archimedean pluripotential theory on polarized affine cones}
\author{Yueqiao Wu}
\address{
        School of Mathematics\\
        Institute for Advanced Study\\
        Princeton, NJ 08540, USA
	}
\email{yueqiaow@ias.edu}

\begin{document}
\begin{abstract}
    We undertake a preliminary step towards studying non-Archimedean pluripotential theory on polarized affine cones over a trivially valued field. We study plurisubharmonic functions and the Monge--Amp\`ere operator defined on the finite energy class, partially generalizing a result of Boucksom--Jonsson on projective varieties. 
\end{abstract}
\maketitle
\setcounter{tocdepth}{1}
\tableofcontents

\section{Introduction}
Throughout the paper, we work over a trivially valued algebraically closed field of characteristic zero. The goal of this paper is to serve as a starting point to understand pluripotential theory on Berkovich analytifications of polarized affine cones initiated in~\cite{wu}, following the framework of Boucksom--Jonsson~\cite{BJ18}. While we do not generalize all of the global theory in \emph{loc.cit.} for now, we aim to present what can be carried out in our setting, describe the key obstacles, and provide sufficient background for an application to local K-stability.

Complex pluripotential theory has played an important role towards solving the complex Monge--Amp\`ere equation via a variational approach in~\cite{BBGZ}. Inspired by this, the variational approach to the Yau--Tian--Donaldson (YTD) conjecture for log Fano varieties (see~\cite{BBJ, Li19}) motivates a systematic study of global non-Archimedean pluripotential theory over trivially valued fields. As developed by Boucksom--Jonsson in~\cite{BJ18}, the global theory provides a variational approach to the solution of a non-Archimedean Monge--Amp\`ere equation over trivially valued fields. This is analogous to a sequence of celebrated results on Monge--Amp\`ere equations in different settings over the past few decades, dating back to Yau's theorem~\cite{Yau78} solving the Calabi conjecture; and more recently a non-Archimedean version of that over discretely valued fields proved in~\cite{BFJ15}.

In this paper, we focus on studying the non-Archimedean pluripotential theory on affine cones over projective varieties. We briefly explain the local setting that we are working in. Let $(X = \Spec R, \TT, \xi)$ be a polarized affine cone, i.e. $X$ is a normal affine variety, $\TT$ is a torus of automorphisms with a unique fixed point $o\in X$ with ideal $\fm$, and $\xi$ is a vector generating a subtorus of $\TT$ and acting with positive weights. Each $\xi$ gives a $\TT$-invariant quasi-monomial valuation $v_\xi$ on $X$ centered at $o$, and it is divisorial if and only if $\xi$ generates a $\GG_m$ action. In the case when $v_\xi$ is divisorial, one can realize $X$ as the affine cone over some polarized pair. As originally introduced in~\cite{CSIrregular}, building on the ideas of~\cite{MSY}, 
there is a local K-stability theory for polarized affine cones, which agrees with the one for polarized pairs when $v_\xi$ is divisorial. Moreover, it is a continuous extension of the global K-stability theory in the sense that the Futaki invariant in \emph{loc.cit.} depends continuously on $\xi$. Relevant YTD type results have been obtained for log Fano cone singularities, that is, $\QQ$-Gorenstein polarized affine cones with klt singularities, see~\cite{CS19, LX18, LWX, Li21}. This also motivates the study of NA pluripotential theory on polarized affine cones.

We now briefly explain the key ideas and difficulties in the local case.
In the global setting, the theory of Boucksom--Jonsson relies on the intersection theory and positivity results provided by the geometry of projective varieties. From this point of view, one might hope the existing local intersection theory, for example Hilbert--Samuel multiplicities, would lead to a local pluripotential theory in our setting. This is in fact the direction that has been extensively studied in~\cite{BLQ}, but will not be the point of view taken here. Philosophically, the local setting we are working in is different from the traditional one in algebraic geometry and commutative algebra, in the sense that the Reeb field provides a polarization in addition to the information on the singularity of the cone point. Nevertheless, the theory by Chambert--Loir and Ducros (see~\cite{CLD12}) we are relying on has a more analytic and local nature. Thus one would hope that a similar local non-Archimedean pluripotential theory can be developed, paralleling the global story. 

\subsection*{Main result}
Let $(X=\Spec R = \Spec \bigoplus_\alpha R_\alpha, \xi)$ be a polarized affine cone described before, and write $X^{\an}$ for the Berkovich analytification of $X$ with respect to the trivial valuation on $\kk$. This consists of all semivaluations on $X$ extending the trivial valuation on $\kk$, equipped with the topology of pointwise convergence. Motivated by Sasakian geometry, the \emph{NA link} is defined to be 
\[X_0\coloneqq \{v\in X^{\an}\mid v(\fm)=0\}.\] 
This is a compact analytic domain in $X^{\an}$, independent of the polarization $\xi$.  While we mostly work with punctured cone $X^{\an}\setminus\{o\}$ when defining different classes of functions, the cone structure on $X$ allows us to think of them as functions on $X_0$, via certain $\xi$-equivariance. More importantly, the Monge--Amp\`ere measures will always be supported on $X_0$, and it will be convenient to deal with convergence theory on $X_0$.

Analogous to the Fubini--Study metrics studied in~\cite{BJ18}, the class $\cH(\xi)$ of \emph{Fubini--Study functions} was introduced in~\cite{wu} as continuous functions $\varphi: X^{\an}\setminus\{o\}\to \RR$ of the form
\[
\varphi = \max_{\alpha_j}\{\frac{\log|f_{\alpha_j}|+\lambda_j}{\la\xi, \alpha_j\ra}\}
\]
for a finite set of $f_{\alpha_j}\in R_{\alpha_j}$ generating an $\fm$-primary ideal. In particular, we always have a canonical reference function $\varphi_\xi$, where all $\lambda_j=0$ and $f_{\alpha_j}$'s generate the maximal ideal $\fm$. One can alternatively identify $X_0$ with the set $\{\varphi_\xi=0\}$, analogous to the construction in Sasakian geometry, see e.g.~\cite{sasaki}. Unlike FS metrics, FS functions behave differently when $\xi$ is irrational. For example, adding two FS functions does not give a FS function in general. A more subtle difference is that the Monge--Amp\`ere measures will have support on quasi-monomial valuations 
instead of just divisorial valuations. To get around this issue, for a fixed polarization $\xi$, we are going to enlarge the space of FS functions first by passing to its closure in the space of $\xi$-equivariant continuous functions on $X^{\an}$, which we call the space of \emph{continuous psh functions} $\cpsh(\xi)$. Following the same pattern as in the complex analytic and global non-Archimedean theory, we define a psh function to be an upper-semicontinuous (usc) function that is the pointwise limit of a decreasing net of continuous psh functions,
the Monge--Amp\`ere energy defined in~\cite{wu} naturally extends to a functional $\E: \psh(\xi)\to \RR\cup \{-\infty\}$ on psh functions. The finite energy class, which in particular contains $\cpsh(\xi)$, is defined to be
\[\cE^1(\xi)\coloneqq \{\varphi\in \psh(\xi): \E(\varphi)>-\infty\}.\]
One can dually define the energy $\E^\vee: \cM(X_0)\to [0,+\infty]$ from the space of Radon probability measures on $X$ supported on $X_0$ via
\[\E^\vee(\mu; \xi) = \sup_{\varphi\in \cE^1(\xi)}\left(\E(\varphi)-\int \varphi \mu\right),\]
and the space of measures of finite energy by 
\[\cM^1(\xi)\coloneqq \{\mu\in \cM(X_0): \E^{\vee}(\mu)<+\infty\}.\]
There are also natural quasi-pseudometrics $\I$ on $\cE^1(\xi)$ and dually $\I^\vee$ on $\cM^1(\xi)$.
We note that by construction one has $\ma(\cH(\xi))\subset \cM^1(\xi)$. As a local analog of~\cite[Theorem 7.14]{BJ18}, our main result extends the Monge--Amp\`ere operator from $\cH(\xi)$ to $\cE^1(\xi)$:
\begin{thm}\label{main-thm-pluripotential}
There is a unique extension of the Monge--Amp\`ere operator $\ma: \cH(\xi)\to \cM^1(\xi)$ to 
$\ma: (\cE^1(\xi), \I)\to (\cM^1(\xi), \I^\vee)$, which is continuous along decreasing nets in $\cE^1(\xi)$, and bi-Lipschitz with respect to $\I$ and $\I^\vee$.
\end{thm}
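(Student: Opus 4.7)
The plan is to adapt the strategy of Boucksom--Jonsson \cite{BJ18} to our local setting via a completion-of-quasi-metric-spaces argument. The Monge--Amp\`ere operator is already defined as a map $\ma:\cH(\xi)\to \cM^1(\xi)$, so the task reduces to: (i) establishing bi-Lipschitz bounds on $\cH(\xi)$, (ii) showing $\cH(\xi)$ is $\I$-dense in $\cE^1(\xi)$, (iii) invoking the general extension lemma for uniformly continuous maps, and (iv) checking monotone continuity separately.

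First, I would establish the bi-Lipschitz estimates on FS functions. For $\varphi,\psi\in \cH(\xi)$, the standard energy pairing gymnastics (integration by parts, polarization identities for the $\E$-functional) should give an identity of the form
\[
\int(\varphi-\psi)(\ma\varphi-\ma\psi) \;=\; \text{expression comparable to } \I(\varphi,\psi),
\]
and combined with the variational definition $\E^\vee(\mu;\xi)=\sup_{\varphi}(\E(\varphi)-\int\varphi\,\mu)$, this yields the two-sided comparison $c\,\I(\varphi,\psi)\leq \I^\vee(\ma\varphi,\ma\psi)\leq C\,\I(\varphi,\psi)$ on $\cH(\xi)\times\cH(\xi)$. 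The upper bound relies on a Cauchy--Schwarz--type inequality for the local intersection pairing on the affine cone, which should be available from the Chambert-Loir--Ducros formalism adapted in \cite{wu}; the lower bound uses $\E^\vee\circ\ma=\E$ modulo normalization.

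Second, I would prove density of $\cH(\xi)$ in $(\cE^1(\xi),\I)$ by a two-step regularization. For $\varphi\in\cpsh(\xi)$, the definition already provides a decreasing net of FS functions $\varphi_i\searrow\varphi$; the energies $\E(\varphi_i)$ decrease to $\E(\varphi)$ by monotone continuity of $\E$ (a point that must be proved earlier but which parallels \cite{BJ18}), so the net is $\I$-Cauchy and $\I$-converges to $\varphi$. For general $\varphi\in\cE^1(\xi)$, one approximates by $\max(\varphi,\varphi_\xi-k)\in\cpsh(\xi)$, or more generally by the decreasing net defining $\varphi$ as a psh function, and invokes dominated convergence for $\E$ on $\cE^1(\xi)$. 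Together with step one, the abstract extension theorem for Lipschitz maps on dense subsets of (quasi-)metric spaces produces the unique bi-Lipschitz extension $\ma:(\cE^1(\xi),\I)\to(\cM^1(\xi),\I^\vee)$.

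Finally, to verify continuity along decreasing nets, I would show that if $\varphi_i\searrow\varphi$ in $\cE^1(\xi)$, then $\I(\varphi_i,\varphi)\to 0$, which by the already-proved bi-Lipschitz property forces $\I^\vee(\ma\varphi_i,\ma\varphi)\to 0$. The first convergence follows from the monotone convergence of $\E$ along decreasing nets in $\cE^1(\xi)$, reducing $\I$-convergence to energy convergence. The main obstacle I anticipate is the bi-Lipschitz estimate on $\cH(\xi)$: because $\xi$ may be irrational, $\cH(\xi)$ is not closed under addition and the usual quasi-equisingular perturbation tricks break down, so one has to enlarge to $\cpsh(\xi)$ before proving the comparison, or carry out all intersection-theoretic manipulations on the enlarged class where linear combinations are available. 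Once this technical point is handled, the rest of the argument is a faithful transcription of the global proof to the polarized affine cone setting.
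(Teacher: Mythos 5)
Your overall architecture (bi-Lipschitz bounds on $\cH(\xi)$, $\I$-density, abstract extension of a uniformly continuous map) is genuinely different from the paper's, and as written it has a gap at the extension step. First, there is a circularity: the quasi-pseudometric $\I$ on $\cE^1(\xi)$ is itself defined through the Monge--Amp\`ere measures of finite-energy functions, so you cannot speak of ``$\I$-density of $\cH(\xi)$ in $\cE^1(\xi)$'' before the operator has been extended. Second, even granting that, the abstract extension lemma needs a complete target: $(\cM^1(\xi),\I^\vee)$ is a quasi-pseudometric space not known to be complete, and the lemma would only hand you a point of an abstract completion, not an actual Radon probability measure on $X_0$. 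To produce a measure you must show that for an $\I$-Cauchy decreasing net $(\varphi_i)$ in $\cH(\xi)$ the integrals $\int u\,\ma(\varphi_i)$ converge for every $u$ in a dense subspace of $C^0(X_0)$; this is exactly the H\"older estimate of Proposition~\ref{prop-holder-cpsh} combined with the density of $\dcpsh_\xi(X)$ in $C^0(X_0)$, neither of which appears in your plan. Your step (iv) has the same problem: $\I(\varphi_i,\varphi)\to 0$ presupposes $\ma(\varphi)$ is already defined, and $\I^\vee$-convergence does not by itself identify the weak limit of the measures.

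The paper instead constructs the mixed measures $\ma(\varphi_1,\dots,\varphi_{n-1})$ with $p$ finite-energy entries directly, by induction on $p$ (Theorem~\ref{chap4-thm-mixed-ma}): the pairing $\int(\psi-\varphi_\xi)\,\ma(\cdots)$ is defined by an integration-by-parts identity whose right-hand side makes sense by the inductive hypothesis, finiteness on $\cE^1(\xi)$ comes from Lemma~\ref{chap4-lem-finite-on-E1}, and continuity along decreasing nets is proved inside the induction via Lemma~\ref{chap4-lem-cts-dec-nets}; uniqueness then follows because every $\varphi\in\cE^1(\xi)$ is a decreasing limit from $\cH(\xi)$ and weak limits of Radon measures are unique. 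The bi-Lipschitz property is derived \emph{a posteriori} in Section~\ref{section: FiniteEnergyMeasure} from the dual description $\I^\vee(\mu,\mu')=\inf_\psi(\J_\mu(\psi)+\J_{\mu'}(\psi))$ together with the strict concavity estimate of Proposition~\ref{prop:E-strict-concavity} applied to the midpoint $\tfrac{\varphi+\varphi'}{2}$; your proposed ingredient ``$\E^\vee\circ\ma=\E$ modulo normalization'' gives the identity $\E^\vee(\ma(\varphi))=\E(\varphi)-\int\varphi\,\ma(\varphi)$ but not the two-sided comparison $\I\approx\I^\vee\circ(\ma\times\ma)$. To salvage your plan you would need to construct the measures first (by the induction, or by weak compactness of $\cM(X_0)$ plus the H\"older estimate) and only afterwards discuss the metric properties.
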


\subsection*{Outlook and relation to other works}
A key missing ingredient that prevents us from generalizing Boucksom--Jonsson's result on studying the image of the Monge--Amp\`ere operator in $\cM^1$ in full generality is the uniform differentiability result, see~\cite[Theorem 8.5]{BJ18}. This ultimately boils down to understanding the relation between function classes with different polarizations. For example, it will be helpful to understand how $\psh(\xi)$ and $\psh(\xi')$ relate when $\xi, \xi'$ are not multiples of each other.
We remark that this is a triviality in the global case, due to the fact that one can add two FS functions with different polarizations. There is also an analogous statement in the complex analytic setting, see~\cite[Lemma 6.1]{HeLi}. 

The next step is to try to solve a local NAMA equation. We expect that the general framework of Boucksom--Jonsson works in this case as well, but one may need to deal with the subtlety that on a irrationally polarized affine cone, the Monge--Amp\`ere operator sends a divisorial norm not to a divisorial measure, but to a quasi-monomial measure, i.e. a measure supported at genuinely quasimonimial but not divisorial points. This is in fact one of the main difficulties in this paper already. 

It is also interesting to understand how much of the local theory would fit into the synthetic approach proposed in~\cite{BJNASyn}. While our setting is slightly different from theirs, and this is not the approach we are taking here, the same strategy can be used to define the space of measures of finite energy and form the duality without introducing functions of finite energy.

We also mention briefly an application of the pluripotential theory in our forthcoming preprint. Specifically, the Monge--Amp\`ere operator defined on the finite energy class gives an alternative characterization of the Monge--Amp\`ere energy using measures of finite energy, see Proposition~\ref{chap4-prop-E-duality}. Using the duality of energy of psh functions and measures, and following the ideas of~\cite{BBJ}, one can show that to test K-semistability of log Fano cone singularities, it suffices to test special test configurations. This is a local analog of~\cite{LX14}.

\subsection*{Structure of the paper} In Section~\ref{section: prelim}, we recall basic definitions and notation, and results in~\cite{wu} on the Monge--Amp\`ere operator defined on test configurations, or equivalently the space of FS functions. We extend the theory to continuous psh functions in Section~\ref{section: cpsh} and Section~\ref{section: EnergyOnPSH}. Section~\ref{section: generalPSH} is devoted to the theory of general psh functions. The first part of the main theorem is proved in Section~\ref{section: ExtendMA}. Finally in Section~\ref{section: FiniteEnergyMeasure}, we introduce the space of measures of finite energy and complete the proof of the main theorem.

\subsection*{Acknowledgement}
This paper grew out of a part of author's thesis. She would like to thank her advisor, Mattias Jonsson, for helpful discussions. This work is partially supported by NSF grants DMS-1900025, DMS-2154380 and DMS-1926686.

\subsection*{Notation and conventions}
For $x, y\in \RR_+$, we write $x\lesssim y$ when $x\leq C(n)y$ for some dimensional constant $C(n)$, and $x\approx y$ if $x\lesssim y$ and $y\lesssim x$. By a quasi-pseudometric on a set $Z$, we mean a pseudometric $d$ on $Z$ satisfying the quasi-triangle inequality $d(x, y)\leq C(d(x, z)+d(z, y))$ for some constant $C>0$.

\section{Preliminaries}\label{section: prelim}
\subsection{Polarized affine cones}
Let $X=\Spec R$ be a normal affine variety of dimension $n$ over a field $\kk$, and let $\TT = \GG_m^r$ be an algebraic torus acting on $X$. Such $X$ is called a \emph{$\TT$-variety} if the $\TT$-action on $X$ is \emph{effective}, i.e., no elements of the torus acts trivially on all of $X$ other than the identity. For our purposes, we will also assume that the $\TT$-action is \emph{good}, that is, it in addition has a unique fixed point, which is contained in the closures of all orbits. We refer to~\cite{AIPSV, PS, AH06, AHS08} for more a detailed study on $\TT$-varieties.

We write $M\coloneqq \Hom(\TT, \GG_m)$ for the weight lattice, and $N\coloneqq M^\vee = \Hom(\GG_m, \TT)$ the dual lattice. Then we have a weight decomposition $R=\bigoplus_{\alpha}R_\alpha$.
Put $N_\RR = N\otimes_\ZZ \RR$, and $\Lambda\coloneqq \{\alpha: R_\alpha \neq 0\}$. We will denote by $o$ the cone point, and $\fm$ the maximal ideal defining the cone point.
\begin{defn}
    The \emph{Reeb cone} is 
    \[\ft_\RR^+\coloneqq \{\xi\in N_\RR: \la \xi, \alpha\ra >0, \ \forall \alpha\in \Lambda\setminus\{0\}\}.\]
    A vector $\xi\in \ft_\RR^+$ is called a \emph{Reeb field}.
    A \emph{polarized affine cone} is a triple $(X, \TT; \xi)$ with $X$ a normal affine $\TT$-variety with good torus action and $\xi$ a Reeb field. We will often omit $\TT$ and write $(X; \xi)$ if the torus action is clear from the context.
\end{defn}

\begin{remark}
    We remark the choice of terminology here. Our definition agrees with the one for polarized affine varieties, as originally introduced in~\cite{CSIrregular}. If one imposes further conditions on $X$ to make it $\QQ$-Gorenstein and have klt singularities, then it is usually called log Fano cone singularities in the literature. The change of terminology is to avoid a similar, but different notion of a polarized affine variety considered in~\cite{Sun23}. 
\end{remark}

\begin{defn}
    A Reeb field $\xi$ is called \emph{quasi-regular} or \emph{rational} if $\xi\in \ft_\QQ^+\coloneqq \ft_\RR^+\cap N_\QQ$. It is called \emph{irregular} or \emph{irrational} otherwise. 
    For a quasi-regular Reeb field $\xi$, we define its \emph{primitive vector} $\hat \xi\coloneqq l\xi$ as the multiple of $\xi$ such that $l$ is smallest with $\la l\xi, \alpha\ra\in \ZZ, \forall \alpha\in \Lambda$. 
\end{defn}

When $\xi$ is rational, the torus $\TT(\xi)$ it generates is simply $\GG_m$, in which case we can recover $X$ as an affine cone over a polarized orbifold pair, i.e. a projective orbifold with an ample orbiline bundle. 

\begin{prop}\label{prelim-prop-quotient-log-fano}
    Let $(X; \xi)$ be a polarized affine cone with $\xi\in \ft_\QQ^+$. Then $X$ is the affine cone over a polarized pair $(V, B_V; L)$.
\end{prop}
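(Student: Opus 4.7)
The approach is the standard Demazure--Kollár correspondence between normal affine $\kk$-algebras that are positively graded with $R_0 = \kk$ and polarized projective pairs, packaged via the Seifert $\GG_m$-bundle structure on the Proj.

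First, I would replace $\xi$ by its primitive vector $\hat\xi \in N$, which generates a $\GG_m$-subtorus of $\TT$. Restricting the weight grading of $R$ along $\hat\xi$ yields a $\ZZ$-grading
\[
R = \bigoplus_{m \in \ZZ} S_m, \qquad S_m = \bigoplus_{\langle \hat\xi, \alpha\rangle = m} R_\alpha.
\]
The Reeb condition $\langle \hat\xi, \alpha\rangle > 0$ for $\alpha \in \Lambda \setminus \{0\}$ forces $S_m = 0$ for $m < 0$, while goodness of the $\TT$-action forces $S_0 = R_0 = \kk$ (otherwise $\Spec R_0 \subset X$ would be a positive-dimensional pointwise-fixed subvariety, contradicting the uniqueness of the fixed point $o$). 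Hence $R$ is a finitely generated, normal, positively graded $\kk$-algebra with $R_0 = \kk$.

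Next, I would set $V := \Proj R$ with respect to the $\hat\xi$-grading. Normality and finite generation of $R$ imply $V$ is a normal projective variety of dimension $n-1$. Choose $d$ divisible enough that the Veronese $R^{(d)} = \bigoplus_m S_{dm}$ is generated in degree one; then $\cO_V(d)$ is an honest ample line bundle and $L := \tfrac{1}{d}\cO_V(d)$ is a well-defined ample $\QQ$-line bundle on $V$, independent of $d$. The boundary $B_V$ is read off from the orbifold structure of the quotient $X \setminus \{o\} \to V$: for each prime divisor $D \subset V$ whose preimage carries a generic $\GG_m(\hat\xi)$-stabilizer of order $b_D$, assign the coefficient $1 - 1/b_D$, and set $B_V := \sum_D (1 - 1/b_D) D$ (a finite sum). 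Unpacking definitions, $S_m = H^0(V, \lfloor mL\rfloor)$ for every $m \geq 0$, so $X \cong \Spec \bigoplus_m H^0(V, \lfloor mL\rfloor)$ is the affine cone over $(V, B_V; L)$ in the Kollár sense.

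The construction itself is classical, so I do not anticipate a serious obstacle; the main work is bookkeeping, namely matching the torus-theoretic data $(R = \bigoplus R_\alpha, \hat\xi)$ with the polarized-pair data $(V, B_V, L)$ by reading the fractional coefficients of $B_V$ off the $\hat\xi$-weights of a homogeneous generating set and correlating them with the Seifert stabilizer orders. This reduces to checking that the local charts of the Seifert $\GG_m$-bundle $X\setminus\{o\} \to V$ realize exactly the round-down $\lfloor mL \rfloor$ on the projective side.
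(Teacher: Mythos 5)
Your proposal is correct and follows essentially the same route as the paper: pass to the primitive generator of the $\GG_m$-subtorus, view $X\setminus\{o\}$ as a Seifert $\GG_m$-bundle over $V=\Proj R$, identify $R$ with the section ring of an ample $\QQ$-line bundle $L$ via the Demazure--Koll\'ar correspondence, and read off $B_V=\sum_i(1-\tfrac{1}{b_i})D_i$ from the fractional part of $L$ (equivalently, the Seifert stabilizer orders). The paper simply cites Koll\'ar and Ross--Thomas for the steps you spell out explicitly, such as $S_0=\kk$, positivity of the grading, and the Veronese construction of $L$.
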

\begin{proof}
    Assume $\xi\in \frac 1l \NN$. Since $\TT(\xi)=\GG_m$, one can think of $X\setminus\{o\}$ as the complement of the zero section in the total space of an ample orbiline bundle over some orbifold (see~\cite{RossThomas}). In algebro-geometric terms, $X\setminus \{o\}$ is a Seifert $\GG_m$-bundle over a projective variety $V = \Proj R$. It is shown in~\cite{kollar} that the coordinate ring $R$ with the new grading induced by the $\xi$ action can be viewed as the section ring of some $\QQ$-line bundle $L$ on $V$:
    \[R = \bigoplus_\alpha R_\alpha = \bigoplus_k\left(\bigoplus_{\la \xi, \alpha\ra = \frac kl}R_\alpha\right) = \bigoplus_{k\in \ZZ} H^0(V, kL).\]
    If the fractional part of $L$ is given by $\sum_i \frac{a_i}{b_i}D_i$, we put $B_V\coloneqq \sum_i(1-\frac{1}{b_i})D_i$ as the orbifold boundary of $V$ to remember the orbifold structure.
\end{proof}

\subsection{Valuations and valuative invariants}
Let $X$ be a variety of dimension $n$. We first recall the space of valuations explored in~\cite{JM12}.
\begin{defn}
	A (real) \emph{semivaluation} $v$ on $X$ is a map $v: K(X)\to \RR \cup \{\infty\}$ such that 
	\begin{enumerate}
		\item $v(f+g)\ge \min\{v(f), v(g)\} ,  \forall f, g\in R$;
		\item $v(fg) = v(f)+v(g), \forall f, g\in R$;
		\item $v(0)=\infty;$
		\item $v|_{\kk^*} = 0$.
	\end{enumerate}
	A \emph{valuation} $v$ is a semivaluation such that $v(f)=\infty$ if and only if $f=0.$
\end{defn}

The Berkovich analytification functor associates to each variety a good $\kk$-analytic space, which can be thought of as a natural compactification of the space of valuations.
For us, we will only consider the case when $X=\Spec R$ is an affine variety over a trivially valued field $\kk$. Then as a set, 
\[X^{\an}  = \{\mathrm{multiplicative \ semivaluations \ on \ } R \mathrm{ \ that \ are \ trivial \ on \ } \kk\}.\]
The topology on $X^{\an}$ is the weakest one such that $v\mapsto v(f)$ is continuous for all $f\in R.$ As a topological space, $X^{\an}$ is locally compact and  Hausdorff.
The space $X^{\an}$ contains the set of valuations as a dense subset. 
For a polarized affine cone $(X; \xi)$, the torus action also induces a map
\[|\TT^{\an}|\times |X^{\an}|\to |X^{\an}|, (\xi, v)\mapsto \xi*v,\]
where $\xi$ is identified with a vector in $N_\RR\subset T^{\an}$, and for $f=\sum f_\alpha\in R$,
\[(\xi*v)(f) = \min_\alpha\{v(f_\alpha)+\la \xi, \alpha\ra\}.\]
Recall from~\cite[Lemma 2.8]{wu} that $v\in X^{\an}$ is $\TT$-invariant iff for any $f=\sum_\alpha f_\alpha$, $v(f)=\min_\alpha v(f_\alpha)$.
This leads us to the following definition.
\begin{defn}
    Let $\fm$ be the maximal ideal defining the unique fixed point $o$ by the torus action. The \emph{non-Archimedean (NA) link} of $X$ at $o$ is 
    \[X_0\coloneqq \{v\in X^{\an} : v(\fm)=0\}.\]
    We will denote by $X_0^{\TT}$ the set of $\TT$-invariant points in $X_0$.
\end{defn}
\begin{remark}
We remark that in the literature, the NA link usually refers to $\Val_{X, o}$ where $o$ is the cone point. Our choice of this name is related to the notion of a link in Sasaki geometry.
\end{remark}

Similar to~\cite{BlumJonsson}, any $\TT$-invariant valuation $v$ centered on $X$ defines a $\TT$-invariant filtration via
\[
\cF_v^\lambda R_\alpha\coloneqq \{f_\alpha\in R_\alpha\mid v(f_\alpha)>\lambda\}.
\]
\begin{defn-prop}\label{chap3-prop-T-formula}
    A valuation $v\in X_0$ is \emph{linearly bounded} if 
    \[T(v;\xi)\coloneqq \sup \{\frac{v(f_\alpha)}{\la \xi, \alpha\ra}: f_\alpha\in R_\alpha, \alpha\in \Lambda\setminus\{0\}\}<\infty.\]
    For any linearly bounded $v\in X_0$, $T(v; \xi)$ is continuous with respect to $\xi$, and homogeneous of degree $-1$. Further, $T(v;\xi)=T(\cF_v;\xi)$ as defined in~\cite{wu}.
\end{defn-prop}
\begin{proof}
    The same proof as in~\cite[Proposition 3.15]{wu} shows the first assertion. For the second assertion, note that the equality is true for $\xi$ primitive. In general, it follows from homogeneity and continuity in $\xi$.
\end{proof}

Recall the \emph{volume} of a linearly bounded filtration is defined in~\cite{wu}, and here we are going to look at only filtrations associated to linearly bounded valuations, see also~\cite{XuZhuang20}. In this context, we write
\[
S(v;\xi)\coloneqq S(\cF_v; \xi) = \lim_{m\to \infty}\frac{\sum a_{m, j} }{m\dim R_m},
\]
where $R_m=\bigoplus_{\alpha\colon \la \xi, \alpha\ra\leq m} R_\alpha$, and $\{a_{m, j}\}$ are the jumping numbers on $R_m$ associated to the filtration $\cF_v$.
This is continuous with respect to $\xi$, and homogeneous of degree $-1$.

\subsection{Fubini--Study functions and the Monge--Amp\`ere operator}
We now recall Fubini--Study functions and the Monge--Amp\`ere operator defined in~\cite{wu} along with some of their properties. 
\begin{defn}
	A Fubini--Study function on $X^{\an}$ with polarization $\xi$ is a function of the form
	
	\[\varphi = \max_{1\leq j\le N} \{\frac{\log|f_j|+\lambda_j}{\la \xi, \alpha_j\ra}:   f_j\in R_{\alpha_j}\}\]
	where $\bigcap_{j=1}^N \{f_j = 0\} = \{o\}$, $\log|f_j|(v)\coloneqq -v(f_j)$, and $\lambda_j\in \RR$.
We will denote by $\cH(\xi)$ the set of Fubini--Study functions on $X^{\an}$ with polarization $\xi$, and write $\varphi_\xi\coloneqq \max\{\frac{\log|f_\alpha|}{\la\xi,\alpha\ra}\}$.
\end{defn}

The following lemma follows directly from the definition. 
\begin{lem}\label{chap4-lem-fs-properties}
    With notation as above, we have
    \begin{enumerate}
        \item If $\varphi\in \cH(\xi)$, then $\varphi+c\in \cH(\xi), \forall c\in \RR$;
        \item If $\varphi_1, \varphi_2\in \cH(\xi)$, then $\max\{\varphi_1, \varphi_2\}\in \cH(\xi)$;
        \item If $\varphi\in \cH(\xi)$, then $a\varphi \in \cH(\frac 1a \xi)$ for $a>0$.
        \item If $\xi$ is rational, and $\varphi_1\in \cH(a\xi), \varphi_2\in \cH(b\xi)$, then $\varphi_1+\varphi_2\in \cH\left((\frac 1a+\frac 1b)^{-1}\xi\right)= \cH(\frac{ab}{a+b}\xi) $.
    \end{enumerate}
\end{lem}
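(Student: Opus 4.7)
Parts (1)--(3) are essentially bookkeeping in the defining expression. For (1), I would absorb the constant into the $\lambda_j$'s: replacing $\lambda_j$ by $\lambda_j + c\la\xi,\alpha_j\ra$ yields an FS representation of $\varphi+c$ with the same generators and polarization. For (2), the pointwise max of two FS functions with the same $\xi$ is the single max over the union of the two families of generators, and this union still cuts out $\{o\}$ set-theoretically because each family already does. For (3), pushing the factor $a>0$ into the denominator gives
\[
a\varphi = \max_j \frac{\log|f_j|+\lambda_j}{\la (1/a)\xi,\alpha_j\ra},
\]
exhibiting $a\varphi$ as an element of $\cH(\tfrac{1}{a}\xi)$.

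The content is in (4). Write $\varphi_1 = \max_j (\log|f_j|+\lambda_j)/(a\la\xi,\alpha_j\ra)$ with $f_j\in R_{\alpha_j}$ and $\varphi_2 = \max_k (\log|g_k|+\mu_k)/(b\la\xi,\beta_k\ra)$ with $g_k\in R_{\beta_k}$. The sum $\varphi_1+\varphi_2$ is a pointwise max over pairs $(j,k)$, so I would work one pair at a time. Setting $p_j=\la\xi,\alpha_j\ra$, $q_k=\la\xi,\beta_k\ra$ and $c=ab/(a+b)$, combining over the common denominator $ab p_j q_k$ gives
\[
\frac{\log|f_j|+\lambda_j}{ap_j} + \frac{\log|g_k|+\mu_k}{bq_k} = \frac{\log|f_j^{bq_k} g_k^{ap_j}|+\bigl(bq_k\lambda_j+ap_j\mu_k\bigr)}{\la c\xi,\, bq_k\alpha_j+ap_j\beta_k\ra},
\]
where the matching of denominators is exactly the identity $c(a+b)=ab$. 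So, modulo making sense of the exponents, the $(j,k)$-term has the FS shape with generator $h_{jk}=f_j^{bq_k}g_k^{ap_j}$ in weight $\gamma_{jk}=bq_k\alpha_j+ap_j\beta_k$.

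The step where I expect the only real obstacle, and where rationality of $\xi$ enters, is arranging that $bq_k$ and $ap_j$ are positive integers so that each $h_{jk}$ is a genuine element of $R$. Since $\xi\in\ft_\QQ^+$ (with $a,b\in\QQ_{>0}$ implicit, so that $a\xi,b\xi$ are themselves rational Reeb fields), all $p_j,q_k$ lie in $\QQ$; I would then replace each representation $(f_j,\alpha_j,\lambda_j)$ by $(f_j^N,N\alpha_j,N\lambda_j)$, and similarly for the $g_k$, with a single $N\in\ZZ_{>0}$ clearing all denominators. This leaves $\varphi_1,\varphi_2$ unchanged as functions while forcing all the exponents into $\ZZ_{>0}$. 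This clearing step is precisely what fails when $\xi$ is irrational, consistent with the remark in the introduction that sums of FS functions need not be FS in that case. Finally, the base-locus condition $\bigcap_{j,k}\{h_{jk}=0\}=\{o\}$ is a quick set-theoretic check using $\{h_{jk}=0\}=\{f_j=0\}\cup\{g_k=0\}$: if $x\neq o$ lay in the full intersection, then choosing some $j_0$ with $f_{j_0}(x)\neq 0$ (available since $\bigcap_j\{f_j=0\}=\{o\}$) would force $g_k(x)=0$ for every $k$, contradicting $\bigcap_k\{g_k=0\}=\{o\}$. This gives the required FS representation of $\varphi_1+\varphi_2$ with polarization $c\xi = \tfrac{ab}{a+b}\xi$.
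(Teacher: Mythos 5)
Your proof is correct and is exactly the direct verification from the definition that the paper asserts but does not write out (the lemma is given without proof). Your observation that part (4) really requires the ratios $bq_k/(ap_j)$ to be rational --- guaranteed once $\xi$ is rational and $a,b$ (or at least $a/b$) are rational, which is the situation in which the lemma is actually invoked --- is the one genuinely non-trivial point, and you handle it correctly with the $N$-clearing step.
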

Given a FS function $\varphi\in \cH(\xi)$, if $\xi_i\to \xi$, then we get a sequence of $\varphi^i\in \cH(\xi_i)$ by only changing the denominator from $\xi$ to $\xi_i$.
\begin{lem}\label{lem-UnifConv}
    With the notation above, we have that $\varphi^i$ converges to $\varphi$ locally uniformly on $X^{\an}\setminus\{0\}.$
\end{lem}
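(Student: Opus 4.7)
The plan is to prove uniform convergence on an arbitrary compact subset $K\subset X^{\an}\setminus\{o\}$ by a direct estimate. Write $\psi_j(v) := \log|f_j|(v)+\lambda_j = -v(f_j)+\lambda_j$ (taking values in $[-\infty,+\infty)$), $a_j := \la\xi,\alpha_j\ra$, $a_j^i := \la\xi_i,\alpha_j\ra$, and $b_j := 1/a_j$, $b_j^i := 1/a_j^i$. Since the index set $\{\alpha_j\}$ is finite and $\xi\in\ft_\RR^+$ gives $a_j>0$ for all $j$, the convergence $\xi_i\to\xi$ implies $a_j^i\to a_j$ and hence $b_j^i\to b_j$ uniformly in $j$; in particular all $b_j^i$ stay in a fixed compact subset of $(0,\infty)$ for $i$ large, and $\delta_i := \max_j|b_j^i-b_j|\to 0$.

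Next I would collect the basic boundedness facts on $K$. Each $\psi_j$ is upper semicontinuous, and $\max_j\psi_j$ is continuous and real-valued on $X^{\an}\setminus\{o\}$ since $\bigcap_j\{f_j=0\}=\{o\}$; by compactness of $K$ there is a uniform upper bound $\psi_j\le C_0$ on $K$, and continuity of $\varphi$ gives $M_0\le \varphi\le M_1$ on $K$. The key observation is that at any $v\in K$, if $j_*=j_*(v)$ realises $\varphi(v)=b_{j_*}\psi_{j_*}(v)$, then $\psi_{j_*}(v)=\varphi(v)/b_{j_*}$ lies in an interval depending only on $K$ and $\xi$ (using that $b_{j_*}$ is bounded away from $0$ and $\infty$), so $|\psi_{j_*}(v)|\le C_1$. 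This yields the lower bound
\[
\varphi^i(v)\ge b_{j_*}^i\psi_{j_*}(v) = b_{j_*}\psi_{j_*}(v)+(b_{j_*}^i-b_{j_*})\psi_{j_*}(v) \ge \varphi(v)-\delta_i C_1.
\]

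For the reverse inequality I would bootstrap from the lower bound. For $i$ large enough that $\delta_i C_1\le 1$, we get $\varphi^i(v)\ge M_0-1$ on $K$. If $j'_*=j'_*(v)$ realises $\varphi^i(v)=b_{j'_*}^i\psi_{j'_*}(v)$, then positivity and uniform boundedness of $b_{j'_*}^i$ force $\psi_{j'_*}(v)\ge C_2$, and combined with $\psi_{j'_*}(v)\le C_0$ this gives $|\psi_{j'_*}(v)|\le C_3$. A symmetric computation
\[
\varphi^i(v)=b_{j'_*}^i\psi_{j'_*}(v)\le b_{j'_*}\psi_{j'_*}(v)+\delta_i C_3 \le \varphi(v)+\delta_i C_3
\]
yields $\|\varphi^i-\varphi\|_{L^\infty(K)}\le \delta_i\max(C_1,C_3)\to 0$, giving uniform convergence on $K$.

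I do not expect any serious obstacle. The only mild subtlety is that individual $\psi_j(v)$ can equal $-\infty$ at zeros of $f_j$, but such $j$ contribute identically to $\varphi$ and $\varphi^i$ and, more importantly, never realise either maximum on $K$; finiteness of the maxima is exactly what permits the bounds on $|\psi_{j_*}(v)|$ and $|\psi_{j'_*}(v)|$ used above.
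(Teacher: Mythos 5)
Your proof is correct, and at its core it rests on the same elementary estimate as the paper's: the error in each term is controlled by $\bigl|\tfrac{1}{\la\xi_i,\alpha_j\ra}-\tfrac{1}{\la\xi,\alpha_j\ra}\bigr|\cdot|\log|f_j|+\lambda_j|$, which tends to $0$ uniformly once $|\log|f_j|+\lambda_j|$ is bounded. The one genuine difference is how you handle the locus where some $\log|f_j|$ is $-\infty$ or very negative: the paper decomposes $K$ into a piece where $\log|f_j|$ is bounded and a piece where the $j$-th term can be dropped from the max, and then proves termwise uniform convergence; you instead avoid any decomposition by estimating only at the indices realizing the two maxima, using the two-sided bound on $\varphi$ (and, after a bootstrap, on $\varphi^i$) to control $|\psi_{j_*}|$ and $|\psi_{j'_*}|$ there. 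Your variant is slightly cleaner in that it sidesteps the case analysis entirely and gives an explicit Lipschitz-type rate $\delta_i\max(C_1,C_3)$; the paper's termwise argument is marginally more reusable since it gives uniform convergence of each individual term, not just of the max. Both are complete proofs.
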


\begin{proof}
    Fix a compact subset $K\subset X^{\an}\setminus\{0\}$. Without loss of generality, we can assume all $\log |f_j|$ are finite valued on $K$. Indeed, if $\log|f_j| $ can take $-\infty$ on $K$ for some $j$, then we can write $K = K_1\cup K_2$ where $K_1= K\cap \{v(f_j)\geq -M\} $ and $K_2 = K\cap \{v(f_j)\leq -M\}$ for some $M\ll \min_K\varphi $. On $K_1$ all $\log|f_j|$ are finite valued, and on $K_2$ we can drop the term involving $\log|f_j|$ in our expression of $\varphi$. 

    Now it suffices to show that if $\log |f|$ is finite valued on $K$, then $\psi_i = \frac{\log|f|+\lambda}{\langle\xi_i,\alpha\rangle}$ converges to $\psi =\frac{\log|f|+\lambda}{\langle\xi,\alpha\rangle} $ uniformly on $K$. This is because 
    \[|\psi_i-\psi| \leq \sup_K(|\log|f|+\lambda|) |\frac{1}{\langle\xi_i,\alpha\rangle} - \frac{1}{\langle\xi,\alpha\rangle}|\to 0\]
    uniformly.
    \end{proof}

As observed in~\cite{wu}, the class of Fubini--Study functions are in particular psh-approachable in the sense of Chambert--Loir and Ducros, whose theory allows us to build the NA Monge--Amp\`ere measure:
\begin{thm}[{\cite[Corollary 5.7]{wu}}]
    For $\varphi_1, \cdots, \varphi_{n-1}\in \cH(\xi)$, the associated Monge--Amp\`ere measure
    \[\ma(\varphi_1, \cdots, \varphi_{n-1};\xi)\coloneqq \frac{1}{\vol(\xi)}d'd''\varphi_1\wedge \cdots \wedge d'd''\varphi_{n-1}\wedge d'd''\varphi_\xi^+,\]
    where $\varphi_\xi^+\coloneqq \max\{\varphi_\xi, 0\}$, 
    is a probability measure supported on $X_0^{\TT}$.
\end{thm}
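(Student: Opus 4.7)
The plan is to verify the three assertions (well-definedness, unit mass, and support on $X_0^{\TT}$) using the Chambert--Loir/Ducros (CLD) formalism for psh-approachable functions. For well-definedness, each $\varphi_i\in\cH(\xi)$ is a finite maximum of tropical model functions $(\log|f|+\lambda)/\langle\xi,\alpha\rangle$, hence continuous on $X^{\an}\setminus\{o\}$ and psh-approachable in the CLD sense, and likewise for the bounded truncation $\varphi_\xi^+=\max\{\varphi_\xi,0\}$. So the currents $d'd''\varphi_i$ and $d'd''\varphi_\xi^+$ are well-defined positive closed $(1,1)$-currents, and their wedge product is a positive Radon measure by the Bedford--Taylor--CLD construction for continuous potentials.

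For the support in $X_0$, the key point is $\xi$-equivariance: $\varphi_\xi(t*v)=\varphi_\xi(v)-t$ under the $\xi$-flow, so $X_0=\{\varphi_\xi=0\}$, and $\varphi_\xi^+$ vanishes identically on the open set $\{\varphi_\xi<0\}=\{v(\fm)>0\}$. On the complementary open set $\{\varphi_\xi>0\}=\{v(\fm)<0\}$, $\varphi_\xi^+$ coincides with the cone radial potential $\varphi_\xi$, which is pluriharmonic transversally to the $\xi$-flow (the non-Archimedean analog of $\log|z|$ being pluriharmonic on the punctured complex cone). Consequently $d'd''\varphi_\xi^+$ concentrates the \emph{kink} along $X_0$, parallel to the fact that $dd^c\max\{\log|z|,0\}$ is Haar measure on the unit circle in $\CC^*$. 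The wedge with the other $d'd''\varphi_i$ then inherits support in $X_0$.

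To refine the support to $X_0^{\TT}$, I would use the $\TT$-invariance of $\varphi_\xi$ (hence of $d'd''\varphi_\xi^+$) and average the wedge product over the compact torus $\TT_c\subset\TT$: since the individual $\varphi_i$ need not be $\TT$-invariant, one averages to produce a $\TT$-invariant positive Radon measure supported on $X_0$, and since $\TT_c$ acts on $X_0\setminus X_0^{\TT}$ with strictly positive-dimensional orbits, the support must lie in $X_0^{\TT}$. For total mass, CLD integration by parts plus multilinearity reduces the computation to $\frac{1}{\vol(\xi)}\int(d'd''\varphi_\xi)^{n-1}\wedge d'd''\varphi_\xi^+$, which equals $1$ by the very definition of $\vol(\xi)$ as the self-intersection of the polarization class.

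The main obstacle, I expect, is the support concentration in step two: formalizing in CLD the \emph{kink gives Haar measure} heuristic on a possibly singular non-Archimedean cone with a general Reeb torus action, and checking that this concentration is preserved under wedge with the other psh-approachable currents. This is especially delicate for irrational $\xi$, where the cone product structure $X^{\an}\setminus\{o\}\cong X_0\times\RR$ is only available after a continuity-in-$\xi$ argument from the rational case, in which $\xi$ generates a genuine $\GG_m$-action and $X\setminus\{o\}$ is a Seifert $\GG_m$-bundle over a projective base.
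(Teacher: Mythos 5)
The overall architecture you propose (the Chambert--Loir--Ducros formalism for psh-approachable functions, $\xi$-equivariance, mass normalization by $\vol(\xi)$) is the right one---the paper gives no proof here and defers entirely to \cite[Corollary 5.7]{wu}, which does run through CLD theory and reduction to rational $\xi$---but two of your support arguments have genuine gaps. First, the claim that $d'd''\varphi_\xi^+$ ``concentrates the kink along $X_0$'' is false for $n\ge 2$: on $\{\varphi_\xi>0\}$ the function $\varphi_\xi^+=\varphi_\xi=\max_\alpha\frac{\log|f_\alpha|}{\langle\xi,\alpha\rangle}$ is not pluriharmonic (the loci where the max switches branches persist into that region, e.g.\ at monomial valuations of $\AA^2$ with $v(x)=v(y)<0$), so $d'd''\varphi_\xi^+$ is a nontrivial positive $(1,1)$-current there. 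What vanishes on $\{\varphi_\xi\neq 0\}$ is the full $n$-fold wedge, and the reason is not the kink of a single factor but the degeneracy of \emph{all} the factors along the $\xi$-flow: each $\varphi_i-\varphi_\xi$ is $\xi$-invariant, so each $d'd''\varphi_i$ is, modulo $d'd''\varphi_\xi$, effectively pulled back from the $(n-1)$-dimensional transverse space, and a wedge of $n$ such currents vanishes by a dimension count (the homogeneous Monge--Amp\`ere phenomenon for log-homogeneous potentials on a cone). Your heuristic with $dd^c\max\{\log|z|,0\}$ on $\CC^*$ only captures the case $n=1$.

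Second, the averaging argument refining the support to $X_0^{\TT}$ does not work as stated. There is no compact torus $\TT_c$ acting in the non-Archimedean setting, and even granting an invariant measure, ``invariance plus positive-dimensional orbits forces support in the fixed locus'' is not a valid implication (Haar measure on a single orbit is a counterexample). The correct mechanism is that every potential in sight depends only on the values $v(f_\alpha)$ on homogeneous elements, hence factors through the retraction $v\mapsto \bar v$ given by $\bar v(\sum_\alpha f_\alpha)=\min_\alpha v(f_\alpha)$ onto $\TT$-invariant points, and the CLD measure built from potentials factoring through this retraction is supported on its image. Finally, the total-mass computation is not ``by the very definition of $\vol(\xi)$'': in this paper $\vol(\xi)$ is defined via jumping numbers of filtrations, and identifying it with $\int(d'd''\varphi_\xi)^{n-1}\wedge d'd''\varphi_\xi^+$ is itself a nontrivial intersection-theoretic statement, established in the cited reference by passing to rational $\xi$ (where $X\setminus\{o\}$ is a Seifert $\GG_m$-bundle over $\Proj R$), invoking the projective theory there, and then using continuity in $\xi$ as in Lemma~\ref{lem-UnifConv}.
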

When $\varphi=\varphi_1=\cdots =\varphi_{n-1}$, and $\xi$ is clear from context, we will simply write $\ma(\varphi)$ for simplicity. We remark that the above assignment is also symmetric and multilinear. 
    
\section{Continuous plurisubharmonic functions}\label{section: cpsh}
In this section we introduce a broader class of plurisubharmonic (psh) functions than FS functions. Unlike the global case, where the sum of two FS functions is still a FS function; more precisely, one can add two metrics on two line bundles $L_1, L_2$ respectively to get a new metric on $L_1+L_2$, FS functions are more rigid in the local case. First, it is not clear what it means to add two FS functions with non-proportional polarizations. While we believe there is some intepretation that can be made in this situation, this shall not be our focus here. Thus in the sequel, we will often fix a polarization $\xi$. 
Second, when $\xi$ is irrational, the sum of two FS functions with the same polarization may not be a FS function anymore. Thus we need to start with a larger class: the class of continuous psh functions. 
\begin{defn}
The space of \emph{continuous $\xi$-psh functions} $\cpsh(\xi)$ is the closure of $\cH(\xi)$ in the space $C^0(X^{\an}, \xi)$ of continuous $\xi$-equivariant functions with topology given by local uniform convergence.
\end{defn}

It follows directly from the definition that this class of functions are indeed continuous and psh-approachable in the sense of Chambert--Loir and Ducros. The following lemma gives a more concrete description of cpsh functions.
\begin{lem}
    Given any $\varphi\in \cpsh(\xi)$ and $\xi_i$ converging to $\xi$, there is a sequence $\varphi_i\in\cH(\xi_i)$ converging locally uniformly to $\varphi$.
\end{lem}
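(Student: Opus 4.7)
The plan is a diagonal argument. By definition of $\cpsh(\xi)$, I first fix a sequence $\psi_k\in \cH(\xi)$ converging locally uniformly to $\varphi$ on $X^{\an}\setminus\{o\}$. Writing each $\psi_k$ concretely as $\psi_k = \max_j\{(\log|f_{k,j}|+\lambda_{k,j})/\langle\xi,\alpha_{k,j}\rangle\}$, I set
\[
\psi_k^i := \max_j\{(\log|f_{k,j}|+\lambda_{k,j})/\langle\xi_i,\alpha_{k,j}\rangle\},
\]
using the same generators and constants but with the new denominator. Openness of the Reeb cone implies $\xi_i\in\ft_\RR^+$ for $i$ large, so $\psi_k^i\in\cH(\xi_i)$ eventually, and Lemma~\ref{lem-UnifConv} then gives $\psi_k^i\to\psi_k$ locally uniformly as $i\to\infty$ for each fixed $k$.

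To extract a single sequence $\varphi_i\in \cH(\xi_i)$ with $\varphi_i\to\varphi$ locally uniformly, I exhaust $X^{\an}\setminus\{o\}$ by an increasing family of compact sets $K_1\subset K_2\subset\cdots$; this is legitimate since affine Berkovich spaces over a trivially valued field are $\sigma$-compact. For each $m$, uniform convergence on $K_m$ provides an index $k_m$ with $\sup_{K_m}|\psi_{k_m}-\varphi|<1/m$, and then, by the previous step, a strictly increasing index $i_m$ such that $\sup_{K_m}|\psi_{k_m}^i-\psi_{k_m}|<1/m$ for all $i\ge i_m$. I define $\varphi_i:=\psi_{k_m}^i\in\cH(\xi_i)$ whenever $i_m\le i<i_{m+1}$, choosing $\varphi_i:=\varphi_{\xi_i}$ for the finitely many $i<i_1$. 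For any compact $K$, picking $M$ with $K\subset K_M$, the triangle inequality yields $\sup_K|\varphi_i-\varphi|<2/m\le 2/M$ whenever $i_m\le i<i_{m+1}$ and $m\ge M$, giving the required locally uniform convergence.

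The main (mild) obstacle is that the $\varphi_i$ are $\xi_i$-equivariant while $\varphi$ is $\xi$-equivariant, which blocks the familiar shortcut of reducing everything to uniform convergence on the compact link $X_0$ via equivariance. The exhaustion/diagonal argument sidesteps this by producing the sequence directly on $X^{\an}\setminus\{o\}$ using only $\sigma$-compactness together with Lemma~\ref{lem-UnifConv}, so no comparison between the two equivariances is needed.
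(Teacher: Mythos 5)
Your proof is correct, and the underlying construction is the same as the paper's: approximate $\varphi$ by Fubini--Study functions for the polarization $\xi$ and swap each denominator $\langle\xi,\alpha\rangle$ for $\langle\xi_i,\alpha\rangle$. The difference lies in how the double limit is handled. The paper synchronizes the two indices from the start: it takes $\varphi_i'\in\cH(\xi)$ with $\varphi_i'\to\varphi$, sets $\varphi_i$ to be the swapped version, and bounds $|\varphi_i'-\varphi_i|\le M\max_j\left|\langle\xi-\xi_i,\alpha_{ij}\rangle/\langle\xi,\alpha_{ij}\rangle\right|$; since the number of terms in $\varphi_i'$ grows with $i$, this requires the convergence $\langle\xi-\xi_i,\alpha\rangle/\langle\xi,\alpha\rangle\to0$ to be \emph{uniform over all weights} $\alpha\in\Lambda\setminus\{0\}$, which holds because $\xi$ lies in the open Reeb cone, so $\langle\xi,\alpha\rangle\gtrsim|\alpha|$ there. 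Your diagonal extraction over a compact exhaustion only ever invokes Lemma~\ref{lem-UnifConv} for a single fixed $\psi_k$, i.e.\ for finitely many weights at a time, so you never need that uniform-in-$\alpha$ estimate; the price is the appeal to $\sigma$-compactness. That appeal is justified, but you should say a word about why the \emph{punctured} space is $\sigma$-compact rather than just citing it for $X^{\an}$: writing $X^{\an}=\bigcup_{r>0}\{v(f_j)\ge -r \ \forall j\}$ for generators $f_j$ of $R$, and noting that $\{o\}=\{v(g_k)=\infty \ \forall k\}$ for finitely many generators $g_k$ of $\fm$, one gets $X^{\an}\setminus\{o\}$ as a countable union of compacta, and local compactness then yields an exhaustion $K_m\subset\operatorname{int}K_{m+1}$ so that every compact set sits inside some $K_M$. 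With that remark added, both arguments are complete and prove the same statement.
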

\begin{proof}
Let $\varphi\in \cpsh(\xi)$, and assume
\[\varphi_i' = \max_{1\leq j\leq N_i}\{\frac{\log|f_{ij}|+\lambda_{ij}}{\langle\xi, \alpha_{ij}\rangle}: f_{ij}\in R_{\alpha_{ij}}\}\in \cH(\xi)\]
converges locally uniformly to $\varphi$. Let $\xi_i\to \xi$ be a sequence of Reeb fields converging to $\xi$. Define
\[\varphi_i = \max_{1\leq j\leq N_i}\{\frac{\log|f_{ij}|+\lambda_{ij}}{\langle\xi_i, \alpha_{ij}\rangle}: f_{ij}\in R_{\alpha_{ij}}\}\in \cH(\xi_i).\]
We claim that $\varphi_i$ also converges to $\varphi$ locally uniformly. Observe that on a fixed compact set $K$, there is some $M>0$ such that $|\varphi_i|\leq M$, and as in Lemma~\ref{lem-UnifConv}, after assuming all $\log|f_{ij}|$'s are finite-valued, we have
\begin{align*}
    |\varphi_i'-\varphi_i|\leq \max\left\{\left|\frac{\log|f_{ij}|+\lambda_{ij}}{\langle\xi_i, \alpha_{ij}\rangle}\frac{\la \xi-\xi_i, \alpha_{ij}\rangle}{\langle \xi, \alpha_{ij}\rangle}\right|\right\}\leq M\max\left\{\left|\frac{\la \xi-\xi_i, \alpha_{ij}\rangle}{\langle \xi, \alpha_{ij}\rangle}\right|\right\}\to 0,
\end{align*}
where the last convergence is uniform in $\alpha$.
Hence $|\varphi_i-\varphi|\leq |\varphi_i-\varphi_i'|+|\varphi_i'-\varphi|\to 0$ locally uniformly as $i\to \infty$.
\end{proof}
In view of the lemma, we sometimes identify a continuous $\xi$-psh function with its restriction as a function on $X_0$.
With this larger class of functions, we have the following properties of continuous psh functions, similar to the that of global FS metrics (see e.g.~\cite[Lemma 2.4]{BJ18v1}, \cite[Proposition 3.6]{BJ18}).
\begin{prop}\label{chap4-prop-cpsh-vector-space}
Let $(X^{\mathrm{an}}; \xi)$ be a polarized affine variety. Then
\begin{enumerate}[(1)]
    \item If $\varphi\in \cpsh(\xi)$, then $\varphi+c\in \cpsh(\xi)$ for all $c\in \mathbb{R}$.
    \item If $\varphi\in \cpsh(a\xi), \psi\in \cpsh(b\xi)$ for some $a, b\in \mathbb{R}_{>0}$, then $\varphi+\psi \in \cpsh(\frac{ab}{a+b}\xi)$.
    \item If $\varphi, \psi \in \cpsh(\xi),$ then $\max\{\varphi, \psi\}\in \cpsh(\xi)$.
    \item If $\varphi \in \cpsh(\xi)$, then $a\varphi \in \cpsh(\frac 1a \xi)$ for all $a\in \mathbb{R}_{>0}$.
    \item $\cpsh(\xi)$ is a convex set, i.e. if $\theta_1, \theta_2\in [0,1]$ such that $\theta_1+\theta_2 = 1$, and $\varphi, \psi\in \cpsh(\xi)$, then $\theta_1\varphi+\theta_2\psi \in \cpsh(\xi)$.
\end{enumerate}
\end{prop}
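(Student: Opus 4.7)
The plan is to reduce each statement to the corresponding property of FS functions in Lemma~\ref{chap4-lem-fs-properties} by taking locally uniform limits, with an additional diagonal argument needed for (2) when $\xi$ is irrational.

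The easy parts (1), (3), (4) follow immediately: given $\varphi_k, \psi_k \in \cH(\xi)$ converging locally uniformly to $\varphi, \psi \in \cpsh(\xi)$, the operations $\varphi_k + c$, $\max\{\varphi_k, \psi_k\}$, and $a\varphi_k$ lie in $\cH(\xi)$, $\cH(\xi)$, and $\cH(\tfrac{1}{a}\xi)$ respectively by Lemma~\ref{chap4-lem-fs-properties}, and each converges locally uniformly to the corresponding limit. Part (5) is then a consequence of (2) and (4): for $\theta_1,\theta_2 > 0$ with $\theta_1 + \theta_2 = 1$, one has $\theta_i\varphi_i \in \cpsh(\tfrac{1}{\theta_i}\xi)$, and applying (2) with $a = 1/\theta_1$, $b = 1/\theta_2$ yields $\theta_1\varphi + \theta_2\psi \in \cpsh(\xi)$ since $\tfrac{ab}{a+b} = 1$.

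The main obstacle is (2). When $\xi$ is rational, one can directly apply Lemma~\ref{chap4-lem-fs-properties}(4) to approximants $\varphi_k \in \cH(a\xi)$ and $\psi_k \in \cH(b\xi)$ of $\varphi$ and $\psi$, obtaining $\varphi_k + \psi_k \in \cH(\tfrac{ab}{a+b}\xi)$ converging locally uniformly to $\varphi + \psi$. For irrational $\xi$, Lemma~\ref{chap4-lem-fs-properties}(4) does not directly apply. The plan is to pick rational Reeb fields $\xi_i \to \xi$, and for each approximant $\varphi_k \in \cH(a\xi)$ use Lemma~\ref{lem-UnifConv} to produce $\varphi_{k,i} \in \cH(a\xi_i)$ with $\varphi_{k,i} \to \varphi_k$ locally uniformly as $i \to \infty$, and similarly $\psi_{k,i} \in \cH(b\xi_i)$. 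Since $\xi_i$ is rational, Lemma~\ref{chap4-lem-fs-properties}(4) gives
\[
\varphi_{k,i} + \psi_{k,i} \in \cH\!\left(\tfrac{ab}{a+b}\xi_i\right).
\]
A second application of the estimate in the proof of Lemma~\ref{lem-UnifConv} produces $\tilde\chi_{k,i} \in \cH(\tfrac{ab}{a+b}\xi)$ obtained by swapping the denominator $\tfrac{ab}{a+b}\xi_i$ for $\tfrac{ab}{a+b}\xi$, so that on any fixed compact set $K \subset X^{\mathrm{an}}\setminus\{o\}$ we have $|\tilde\chi_{k,i} - (\varphi_{k,i} + \psi_{k,i})| \to 0$ as $i \to \infty$. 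Combining, $\tilde\chi_{k,i} \to \varphi_k + \psi_k$ locally uniformly as $i \to \infty$, and a diagonal extraction $k \mapsto i(k)$ yields $\tilde\chi_{k,i(k)} \in \cH(\tfrac{ab}{a+b}\xi)$ converging locally uniformly to $\varphi + \psi$, establishing $\varphi + \psi \in \cpsh(\tfrac{ab}{a+b}\xi)$.

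The technical subtlety to watch in (2) is the dependence on the weights $\alpha_j,\beta_l$ appearing in each fixed approximant: Lemma~\ref{lem-UnifConv} is applied separately at each level $k$ (with finitely many weights), so uniformity of the estimates is only needed for fixed $k$ as $i \to \infty$, which is exactly what the lemma provides. This is why the diagonal step is unavoidable rather than being a single limit.
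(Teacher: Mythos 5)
Your proof is correct and follows essentially the same route as the paper: reduce each part to Lemma~\ref{chap4-lem-fs-properties} by approximating $\xi$ with rational Reeb fields and swapping denominators via the estimate in Lemma~\ref{lem-UnifConv}. The paper packages your explicit double-index diagonal argument for (2) into the lemma immediately preceding the proposition (which produces approximants in $\cH(a\xi_i)$ and $\cH(b\xi_i)$ directly), but the underlying estimate and logic are identical.
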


\begin{proof}
(1) and (4) are immediate consequences of the corresponding properties of FS functions in Lemma~\ref{chap4-lem-fs-properties}. (5) follows from (2) and (4).

In view of the previous lemma, we can now assume there are sequences $\varphi_i\in\fs(a\xi_i)$, $\psi_i\in \fs(b\xi_i)$ approaching $\varphi,\psi$ with $\xi_i$ rational. Then by Lemma~\ref{chap4-lem-fs-properties}, $\varphi_i+\psi_i\in \fs(\frac{ab}{a+b}\xi_i)$, and $\varphi_i+\psi_i$ converges to $\varphi+\psi$ locally uniformly. This proves (2).

For (3), we pick $\varphi_i, \psi_i\in \fs(\xi_i)$ with $\xi_i$ rational and $\xi_i\to \xi$ approximating $\varphi$ and $\psi$ respectively. Then on any compact subset, one has
\begin{align*}
    \left|\max\{\varphi_i, \psi_i\} - \max\{\varphi,\psi\}\right|
    &=\frac 12 \left|\varphi_i+\psi_i-\varphi-\psi+|\varphi_i-\psi_i|-|\varphi-\psi|\right|\\
    &\leq \frac 12\left(|\varphi_i-\varphi|+|\psi_i-\psi|+|\varphi_i-\psi_i-\varphi+\psi|\right)\\
    &\to 0 \mathrm{, as \ } i\to \infty.
\end{align*}
\end{proof}

In what below, we define a local analogue of ``the difference of FS metrics" studied in~\cite{BJ18v1} (known as PL functions in~\cite{BJ18}). Again, we can either think of them as functions on $X^{\an}\setminus\{o\}$ or $X_0$ depending on their $\xi$-equivariance properties. These functions in particular form a dense subset of continuous functions on the NA link $X_0$.
\begin{defn}
We say a function $f$ on $X^{\an}$ is a \emph{difference of continuous psh functions with polarization $\xi$} if $f=\varphi-\psi$ for some $\varphi\in \cpsh(a\xi), \psi\in \cpsh(b\xi)$, where $a=\frac{b}{1+b}\in \mathbb{R}_{>0}$. The set of difference of bounded psh functions on $X^{\an}\setminus \{o\}$ with polarization $\xi$ is denoted by $\dcpsh(X,\xi)$. 
We denote by $\dcpsh_\xi(X)$ the set of functions of the form $\varphi-\psi$ where $\varphi,\psi \in \cpsh(a\xi)$ for some $a\in\mathbb{R}_{>0}$.
\end{defn}
We note that the former is a class of $\xi$-equivariant functions, and the latter class is $\xi$-invariant. It follows easily from the definition that if $\varphi\in \dcpsh(X, \xi)$, and $u\in \dcpsh_\xi(X)$, then $\varphi+u\in \dcpsh(X, \xi)$.
\begin{thm}
The space $\dcpsh_\xi(X)$ is an $\RR$-vector space and is a dense subset of $C^0(X_0)$.
\end{thm}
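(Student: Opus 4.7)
The natural approach is to invoke the lattice form of the Stone--Weierstrass theorem on the compact Hausdorff space $X_0$: a vector subspace of $C^0(X_0)$ that contains the constants, is closed under pointwise $\max$ and $\min$, and separates points is dense. So I would verify these four properties for $\dcpsh_\xi(X)$ in turn.

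The vector space and lattice structure follow from mechanical application of Proposition~\ref{chap4-prop-cpsh-vector-space}. For addition, writing $f_i = \varphi_i - \psi_i$ with $\varphi_i, \psi_i \in \cpsh(a_i\xi)$, part (2) gives $f_1 + f_2 = (\varphi_1 + \varphi_2) - (\psi_1 + \psi_2)$ with both terms in $\cpsh(c\xi)$ for $c = a_1 a_2/(a_1 + a_2)$. Scalar multiplication by $c > 0$ uses part (4); for $c < 0$ we flip the order of the summands; and $0 = \eta - \eta$ for any $\eta \in \cpsh(a\xi)$. Constants belong to $\dcpsh_\xi(X)$ because part (1) gives $c = (\varphi_\xi + c) - \varphi_\xi$. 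For closure under $\max$, given $f, g \in \dcpsh_\xi(X)$ I would first put them over a common polarization by rewriting $f = (\varphi_1 + \psi_2) - (\psi_1 + \psi_2)$ and $g = (\varphi_2 + \psi_1) - (\psi_1 + \psi_2)$, with all terms in $\cpsh(c\xi)$; then $\max(f,g) = \max(\varphi_1 + \psi_2,\, \varphi_2 + \psi_1) - (\psi_1 + \psi_2)$, and the numerator lies in $\cpsh(c\xi)$ by part (3). Closure under $\min$ follows from $\min(f,g) = -\max(-f,-g)$ via the already-established vector space property.

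The main obstacle is separating points of $X_0$. Given distinct $v_1, v_2 \in X_0$ I would choose a homogeneous element $f \in R_\alpha$ with $v_1(f) \neq v_2(f)$ and build the Fubini--Study function $\psi = \max\bigl\{\tfrac{\log|f| + \lambda}{\langle\xi,\alpha\rangle},\,\varphi_\xi\bigr\}$, which qualifies as FS since the generators appearing in $\varphi_\xi$ already cut out $\{o\}$. For $\lambda > \max(v_1(f), v_2(f))$ one has $\psi(v_i) = (\lambda - v_i(f))/\langle\xi,\alpha\rangle$ on $X_0 = \{\varphi_\xi = 0\}$, so $\psi \in \cH(\xi) \subset \cpsh(\xi) \subset \dcpsh_\xi(X)$ separates $v_1$ and $v_2$. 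The delicate step here is producing the homogeneous separator: since our FS functions only see values of semivaluations on homogeneous elements, the argument really requires distinct points of $X_0$ (or, for the applications to follow, of the $\TT$-invariant locus $X_0^\TT$ on which the Monge--Amp\`ere measures are supported) to be separated by homogeneous elements of $R$. Granting that, the lattice Stone--Weierstrass theorem yields density.
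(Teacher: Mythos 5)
Your overall strategy is the same as the paper's: lattice Stone--Weierstrass on the compact space $X_0$, with the vector-space and $\max$-closure properties obtained by rewriting $f$ and $g$ over a common polarization, exactly the computation in the paper's proof. The one slip is in the separation step: the inclusion $\cpsh(\xi)\subset\dcpsh_\xi(X)$ you invoke is false as stated, since a single continuous psh function is $\xi$-\emph{equivariant} rather than $\xi$-\emph{invariant}, hence does not descend to a function on $X_0$ and is not an element of the lattice to which you are applying Stone--Weierstrass. The fix is already implicit in your formula: take the genuine difference $u=\max\bigl\{\tfrac{\log|f|+\lambda}{\langle\xi,\alpha\rangle},\varphi_\xi\bigr\}-\varphi_\xi\in\dcpsh_\xi(X)$, which on $X_0=\{\varphi_\xi=0\}$ equals $\max\bigl\{\tfrac{\lambda-v(f)}{\langle\xi,\alpha\rangle},0\bigr\}$ and separates $v_1,v_2$ for $\lambda\gg0$ as soon as $v_1(f)\neq v_2(f)$. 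The paper's separator is slightly different: rather than using that $\varphi_\xi$ vanishes on $X_0$, it produces two homogeneous elements $f_\alpha,f_\beta$ whose normalized increments $\tfrac{v_2(f_\gamma)-v_1(f_\gamma)}{\langle\xi,\gamma\rangle}$ differ (possible because otherwise $v_2=(t\xi)*v_1$, which is incompatible with both points lying in $X_0$ unless $t=0$), and separates with a difference of the two normalized $\log|f|$'s; your version is a legitimate shortcut.

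The caveat you flag --- that distinct points of $X_0$ must already be distinguished by \emph{homogeneous} elements of $R$ --- is real, and it sits in the paper's proof in exactly the same place: the paper opens with ``we may assume $v_1(f_\alpha)<v_2(f_\alpha)$ for some $f_\alpha\in R_\alpha$'' without further justification. Since every function in $\dcpsh_\xi(X)$ is built from $\log|f|$ with $f$ homogeneous, two points of $X_0$ agreeing on all homogeneous elements cannot be separated by this class at all, so the hypothesis is genuinely needed (it is automatic on the $\TT$-invariant locus $X_0^{\TT}$, where the Monge--Amp\`ere measures are supported). You have correctly located the one thin point of the argument rather than introduced a new gap.
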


\begin{proof} 
We follow the proof as in~\cite[Theorem 2.7]{BJ18v1} and~\cite[Theorem 7.12]{Gubler}. By the lattice version of Stone--Weierstrass Theorem, it suffices to show that $\dcpsh_\xi(X)$ is a $\mathbb{Q}$-vector space which is stable under max and contains all constants, and that it also separates points in $X_0$. The same proof shows that $\dcpsh_\xi(X)$ is an $\RR$-vector space.

By construction, and Proposition~\ref{chap4-prop-cpsh-vector-space}, it is a $\mathbb{Q}$-vector space and contains all constants. Let $u=\varphi_1-\varphi_2$ and $v = \psi_1-\psi_2$ be two d-cpsh functions with $\varphi_1, \varphi_2\in \cpsh(a\xi)$, and $\psi_1, \psi_2\in \cpsh(b\xi)$. Then 
\begin{align*}
    \max\{u,v\} &= \frac 12\left(u+v+|u-v|\right)
    =\frac 12\left(\varphi_1+\psi_1-\varphi_2-\psi_2+|\varphi_1+\psi_2-\varphi_2-\psi_1|\right)\\
    &= \frac 12 (\varphi_1+\psi_1-\varphi_2-\psi_2)\\
    & \ \ \  +\max\{\varphi_1+\psi_2, \psi_1+\varphi_2\}-2(\varphi_1+\psi_2+\psi_1+\varphi_2)\\
    &\in \dcpsh_\xi(X).
\end{align*}
It remains to show that it separates points. Let $v_1, v_2$ be two different points in $X_0$. We may assume $v_1(f_\alpha)<v_2(f_\alpha)$ for some $f_\alpha\in R_\alpha$. Let $t$ be such that $v_2(f_\alpha)-v_1(f_\alpha) = t\langle \xi, \alpha\rangle$. Then there is some $f_\beta\in R_\beta, \beta\neq \alpha$ such that $v_2(f_\beta)-v_1(f_\beta)\neq t\langle \xi, \beta\rangle$, since otherwise $v_2=(t\xi)*v_1$, but this implies $v_2\notin X_0$, contradiction. Let $\psi=\max\{\frac{\log|f_\gamma|+\lambda_\gamma}{\la \xi, \gamma\ra}\}$ be a function in $\fs(\xi)$ with $f_\beta = f_\gamma, \lambda_\beta=0$ for some $\gamma$, and choose $\lambda_\gamma, \gamma\neq \beta$ small enough  such that $\psi(v_i)=\frac{\log|f_\beta|(v_i)}{\langle\xi, \beta\rangle}$ for $i=1, 2$.
Take $u=\max\{\frac{\log|f_\alpha|}{\langle\xi,\alpha\rangle}, \psi-m\}-\psi$. Then $u(v_1)\neq u(v_2)$ for $m\gg 0$. This completes the proof.
\end{proof}
 
\begin{thm}\label{density}
For any $\xi$, the space $\dcpsh(X, \xi)$ dense in $C^0(X^{\an}\setminus\{o\},\xi).$
\end{thm}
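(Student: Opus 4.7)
The plan is to reduce the statement to the density of $\dcpsh_\xi(X)$ in $C^0(X_0)$ already established, using the reference function $\varphi_\xi$ to trivialize the $\xi$-equivariance in one direction and restore it in the other.

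First I would observe that any $f\in C^0(X^{\an}\setminus\{o\},\xi)$ satisfies $f((s\xi)*v)=f(v)-s$, and the same holds for $\varphi_\xi\in\cH(\xi)$. Consequently $g\coloneqq f-\varphi_\xi$ is a continuous $\xi$-invariant function, hence determined by its restriction to the compact slice $X_0=\{\varphi_\xi=0\}$ via the continuous retraction $\pi:X^{\an}\setminus\{o\}\to X_0$, $v\mapsto(\varphi_\xi(v)\xi)*v$. For any compact $K\subset X^{\an}\setminus\{o\}$, the image $\pi(K)\subset X_0$ is compact, and for any $\xi$-invariant continuous $u$ one has $\sup_{K}|u-g|=\sup_{\pi(K)}|u|_{X_0}-g|_{X_0}|$, so uniform convergence on $X_0$ automatically upgrades to locally uniform convergence of the $\xi$-invariant lifts on $X^{\an}\setminus\{o\}$.

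By the preceding theorem, I choose $u_n\in\dcpsh_\xi(X)$ with $u_n|_{X_0}\to g|_{X_0}$ uniformly, and write $u_n=\varphi_n-\psi_n$ with $\varphi_n,\psi_n\in\cpsh(a_n\xi)$ for some $a_n>0$. Setting $\tilde u_n\coloneqq u_n+\varphi_\xi$, the remark above gives $\tilde u_n\to f$ locally uniformly on $X^{\an}\setminus\{o\}$. It remains to verify $\tilde u_n\in\dcpsh(X,\xi)$. By Proposition~\ref{chap4-prop-cpsh-vector-space}(2), $\varphi_n+\varphi_\xi\in\cpsh(c_n\xi)$ with $c_n=\frac{a_n}{a_n+1}$, while $\psi_n\in\cpsh(a_n\xi)$. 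Setting $b=a_n$ and $a=c_n$, one checks $a=\frac{b}{1+b}$, which is precisely the polarization relation in the definition of $\dcpsh(X,\xi)$; hence $\tilde u_n=(\varphi_n+\varphi_\xi)-\psi_n\in\dcpsh(X,\xi)$, and density follows.

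The argument is short and largely bookkeeping; there is no serious obstacle. The only point deserving attention is the polarization arithmetic: the identity $\tfrac1a-\tfrac1b=1$ built into the definition of $\dcpsh(X,\xi)$ is exactly what is needed for adding $\varphi_\xi\in\cpsh(\xi)$ to the positive part of a $\xi$-invariant d-cpsh function to land back in the $\xi$-equivariant d-cpsh class. Once this is arranged, the density statement is a direct transport of the previous one across the trivial $\varphi_\xi$-twist.
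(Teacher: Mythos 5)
Your proposal is correct and follows essentially the same route as the paper: subtract $\varphi_\xi$ to pass to a $\xi$-invariant function on $X_0$, apply the density of $\dcpsh_\xi(X)$ in $C^0(X_0)$, and add $\varphi_\xi$ back. Your explicit check that $(\varphi_n+\varphi_\xi)-\psi_n$ satisfies the polarization relation $a=\frac{b}{1+b}$ defining $\dcpsh(X,\xi)$ is a detail the paper leaves implicit, but it is the same argument.
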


\begin{proof}
 Let $\varphi$ be a continuous $\xi$-equivariant function on $X^{\an}\setminus\{o\}$. Then $\varphi-\varphi_\xi$ is a $\xi$-invariant function, and thus can be viewed as a function in $C^0(X_0).$ By previous theorem, there is a sequence of $u_i = \varphi_i-\psi_i\in \dcpsh_\xi(X)$ with $\varphi_i, \psi_i\in \cpsh(X, a_i\xi)$ such that $u_i$ converges to $\varphi-\varphi_\xi$ locally uniformly. Thus the sequence $(\varphi_i+\varphi_\xi-\psi_i)$ converges to $\varphi$ locally uniformly.
\end{proof}

Difference of continous psh functions are differences of psh-approachable functions in the sense of Chambert--Loir and Ducros. This allows us to generalize the mixed Monge--Amp\`ere measures.
\begin{defn-prop}\label{chap4-def-prop-MA-meas}
    Let $\varphi_1,  \varphi_2, \cdots, \varphi_{n-1}$ be functions in $\dcpsh(X, \xi)$ or $\dcpsh_\xi(X)$. Then the assignment
    \[(\varphi_1, \cdots, \varphi_{n-1})\mapsto d'd''\varphi_1\wedge \cdots \wedge d'd''\varphi_{n-1}\wedge d'd''\varphi_\xi^+\]
    is symmetric, multilinear with respect to convex combinations, and defines a Radon measure on $X^{\an}$ supported on $X_0$.
\end{defn-prop}

\begin{proof}
    First, when all the $\varphi_i$ are continuous psh functions, they are psh-approachable in the sense of Chambert--Loir and Ducros. The conclusion follows from~\cite[Corollaire 5.6.6]{CLD12}, since the set of continuous psh functions with a fixed polarization is closed under convex combination. In general, the $\varphi_i$ are differences of psh-approachable functions, and are again closed under convex combinations. Thus the result again follows from~\cite{CLD12}. That the support is in $X_0$ follows from Proposition~\cite[Proposition 5.5]{wu}.
\end{proof}

Similar to the global theory, the key properties for functions in $\dcpsh(X, \xi)$ and $\dcpsh_\xi(X)$ are the integration by parts formula and a local version of the Hodge index theorem. 

\begin{prop}[Integration by parts]\label{prop-int-by-parts}
Let $u, v\in \dcpsh_\xi(X)$, and $\varphi_1,\cdots, \varphi_{n-2}\in \dcpsh(X, \xi)$. Then
\[\int_{X^{\an}} u d'd''v\wedge d'd''\varphi_1\wedge \cdots \wedge d'd''\varphi_{n-2}\wedge d'd''\varphi_\xi^+ =\int_{X^{\an}} v d'd''u\wedge d'd''\varphi_1\wedge \cdots \wedge d'd''\varphi_{n-2}\wedge d'd''\varphi_\xi^+.  \]
\end{prop}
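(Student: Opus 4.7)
The plan is to reduce the formula to the case of Fubini--Study data and then invoke the integration-by-parts result from Chambert--Loir--Ducros theory, with the form $d'd''\varphi_\xi^+$ playing the role of a compactly supported cutoff that localizes everything to the compact set $X_0$.

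\emph{Step 1 (Bilinear reduction).} By the definition of $\dcpsh_\xi(X)$ and $\dcpsh(X,\xi)$ together with the multilinearity with respect to convex combinations in Definition-Proposition~\ref{chap4-def-prop-MA-meas}, both sides are $\RR$-bilinear in $(u,v)$ and multilinear in $(\varphi_1,\dots,\varphi_{n-2})$. Splitting each function as a difference, I may therefore assume $u,v\in\cpsh(a\xi)$ for a common $a>0$ and each $\varphi_j\in\cpsh(\xi)$.

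\emph{Step 2 (Reduction to FS, via rational Reeb fields).} Choose a sequence of rational Reeb fields $\xi_k\to\xi$. Using the approximation lemma for continuous psh functions established just before Proposition~\ref{chap4-prop-cpsh-vector-space}, I pick sequences $u_k,v_k\in\cH(a\xi_k)$ and $\varphi_j^k\in\cH(\xi_k)$ converging locally uniformly on $X^{\an}\setminus\{o\}$ to $u$, $v$, $\varphi_j$ respectively. Similarly $\varphi_{\xi_k}^+\to\varphi_\xi^+$ locally uniformly by Lemma~\ref{lem-UnifConv}.

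\emph{Step 3 (Integration by parts for FS data).} For each fixed $k$, all the arguments are Fubini--Study, hence in particular psh-approachable in the sense of Chambert--Loir--Ducros. The wedge $d'd''\varphi_{\xi_k}^+$ is supported on $X_0=\{\varphi_{\xi_k}=0\}$ (see~\cite[Proposition 5.5]{wu}), so the mixed current $d'd''\varphi_1^k\wedge\cdots\wedge d'd''\varphi_{n-2}^k\wedge d'd''\varphi_{\xi_k}^+$ defines a signed Radon measure of compact support on $X^{\an}$. The integration-by-parts formula of~\cite{CLD12} applied to the pair of psh-approachable functions $u_k,v_k$ against this compactly supported mixed current then yields the desired equality at the level of the sequence:
\[
\int u_k\, d'd''v_k\wedge T_k\;=\;\int v_k\, d'd''u_k\wedge T_k,
\]
where $T_k=d'd''\varphi_1^k\wedge\cdots\wedge d'd''\varphi_{n-2}^k\wedge d'd''\varphi_{\xi_k}^+$.

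\emph{Step 4 (Passing to the limit).} The mixed Monge--Amp\`ere-type measures $d'd''u_k\wedge T_k$ and $d'd''v_k\wedge T_k$ are probability measures (up to normalization) supported on the compact set $X_0$ and they converge weakly to the corresponding limiting mixed measures, by the continuity of the MA operator along locally uniform convergence of FS data established in~\cite{wu} and~\cite{CLD12}. Since $u_k\to u$ and $v_k\to v$ uniformly on a neighborhood of $X_0$, standard weak convergence gives the equality in the limit.

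The main obstacle is Step 3: one must check that Chambert--Loir--Ducros integration by parts applies to the non-compact space $X^{\an}$, and that no boundary contribution appears. This is where the factor $d'd''\varphi_\xi^+$ is essential; as soon as it localizes the current to the compact analytic domain $X_0$, all approximating currents $T_k$ are compactly supported on a common compact set, and the CLD integration by parts goes through with no boundary term. The minor technical point to verify is uniform support: one must confirm that $\supp T_k$ stays inside a fixed compact neighborhood of $X_0$ independent of $k$, which follows from the fact that $\varphi_{\xi_k}^+$ is supported in $\{\varphi_\xi\ge -\varepsilon\}$ for large $k$.
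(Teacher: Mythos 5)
Your overall architecture (approximate, integrate by parts at the level of the approximants, pass to the limit) matches the paper's, and Steps 1, 2 and 4 are fine. But Step 3 rests on a claim that is false, and it is exactly the claim that carries the whole weight of the argument: you assert that $T_k=d'd''\varphi_1^k\wedge\cdots\wedge d'd''\varphi_{n-2}^k\wedge d'd''\varphi_{\xi_k}^+$ is ``a signed Radon measure of compact support'' supported on $X_0$. First, $T_k$ is an $(n-1,n-1)$-current, not a measure; only after wedging with one more $d'd''u_k$ or $d'd''v_k$ do you get a top-degree current. Second, and more seriously, the localization to $X_0$ established in \cite[Proposition 5.5]{wu} is a statement about that \emph{top-degree} wedge: it uses the $\xi$-equivariance of the remaining factor to kill the mass living over the non-compact corner locus of $\varphi_\xi^+$. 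The intermediate current $T_k$ itself is not compactly supported --- already for $X=\AA^2$ the support of $d'd''\varphi_\xi^+$ contains the unbounded ray where the defining terms of $\varphi_\xi$ tie and $\varphi_\xi>0$. So the premise ``compactly supported closed current, hence no boundary term in Chambert--Loir--Ducros integration by parts'' is not available, and your own closing paragraph, which tries to patch this by claiming $\supp T_k$ lies in a fixed compact neighborhood of $X_0$, repeats the same error rather than fixing it.

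The paper avoids this by not treating $d'd''\varphi_\xi^+$ as a passive cutoff: after reducing to smooth psh approximants in the sense of \cite{CLD12}, it integrates by parts so as to move $\varphi_\xi^+$ into the function slot, showing that \emph{both} sides equal the manifestly $u\leftrightarrow v$-symmetric expression $\int_{\{\varphi_\xi\geq 0\}}\varphi_\xi\, d'd''u\wedge d'd''v\wedge d'd''\varphi_1\wedge\cdots\wedge d'd''\varphi_{n-2}$; the control of contributions at infinity happens in that step, via the equivariance structure, exactly as in \cite[Proposition 5.5]{wu}. To repair your argument you would need to either reproduce that computation or otherwise justify the absence of boundary terms on the non-compact space $X^{\an}\setminus\{o\}$; compact support of $T_k$ cannot be the reason.
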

\begin{proof}
As in the proof of~\cite[Proposition 5.5]{wu}, all functions above can be approximated by smooth psh functions in the sense of~\cite{CLD12}. If $u,v, \varphi_1, \cdots, \varphi_{n-2}$ are all smooth, then both sides are equal to 
\[\int_{\{\varphi_\xi\geq 0\}} \varphi_\xi d'd''u\wedge d'd''v\wedge d'd''\varphi_1\wedge \cdots \wedge d'd''\varphi_{n-2}.\]
In general, the equality follows from a similar argument as in~\cite{wu}.
\end{proof}
\begin{prop}[Hodge Index Theorem]
Let $u\in \dcpsh(X)$, and $\varphi_1, \cdots, \varphi_{n-2}\in \cpsh(X, \xi)$. Then
\[\int_{X^{\an}} u d'd''u\wedge d'd''\varphi_1\wedge \cdots \wedge d'd''\varphi_{n-2}\wedge d'd''\varphi_\xi^+\leq 0.\]
\end{prop}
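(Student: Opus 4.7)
The plan is to reduce the inequality to the global non-Archimedean Hodge index theorem on a projective variety via the cone-over-a-polarized-pair construction, combined with rational approximation of $\xi$.

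First I would apply density and continuity. By Theorem~\ref{density} and the continuity of the mixed Monge--Amp\`ere form under local uniform convergence (inherited from~\cite{CLD12} via Definition-Proposition~\ref{chap4-def-prop-MA-meas}), the symmetric bilinear form
\[(u, v) \mapsto \int_{X^{\an}} u\, d'd''v \wedge d'd''\varphi_1 \wedge \cdots \wedge d'd''\varphi_{n-2} \wedge d'd''\varphi_\xi^+\]
is jointly continuous in all entries along uniform limits. It therefore suffices to establish the inequality when $u = u_1 - u_2$ with $u_1, u_2 \in \cH(a\xi)$ for some $a>0$ and each $\varphi_i \in \cH(\xi)$. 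Integration by parts (Proposition~\ref{prop-int-by-parts}) also gives that the form is symmetric in $u$ and $v$, which will be used in the Hodge-theoretic step.

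Next, using Lemma~\ref{lem-UnifConv}, I would approximate $\xi$ by rational Reeb fields $\xi_k \to \xi$ and simultaneously approximate the FS data by FS functions of polarization proportional to $\xi_k$ (changing only the denominators). Passing to the limit, it suffices to prove the inequality for rational $\xi$. In that case, Proposition~\ref{prelim-prop-quotient-log-fano} identifies $X$ with the affine cone over a polarized pair $(V, B_V; L)$. Under this identification, $\xi$-equivariant FS functions on $X^{\an}$ correspond to FS metrics on $L$ over $V^{\an}$, while $\xi$-invariant functions in $\dcpsh_\xi(X)$ descend from continuous functions on $V^{\an}$; moreover, the local mixed Monge--Amp\`ere measure on $X^{\an}$ pushes forward (up to the normalization $\vol(\xi)$) to the global mixed Monge--Amp\`ere measure on $V^{\an}$, as extracted from the construction in~\cite{wu}. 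The desired inequality is then equivalent to the global non-Archimedean Hodge index inequality
\[\int_{V^{\an}} u\, dd^c u \wedge dd^c\varphi_1 \wedge \cdots \wedge dd^c\varphi_{n-2} \leq 0\]
on the projective variety $V$, which is the theorem of Boucksom--Jonsson~\cite{BJ18}, itself derived from the algebraic Hodge index theorem via approximation by test configurations.

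The main obstacle I anticipate is the last step: making the correspondence between local data on $X^{\an}$ and global data on $V^{\an}$ precise enough to transfer a Monge--Amp\`ere-type inequality, especially since rational approximations $\xi_k \to \xi$ produce larger and larger denominators, so the associated orbifold data $(V_k, B_{V_k}; L_k)$ varies with $k$ and one must keep the estimates uniform. An alternative route avoiding this reduction would be to mimic the smoothing argument in the proof of Proposition~\ref{prop-int-by-parts}: approximate all inputs by smooth forms in the sense of Chambert-Loir--Ducros, apply a pointwise Khovanskii--Teissier-type inequality on the smooth side, and pass to the limit. However, the requisite smoothing and positivity statements in this local trivially valued setting are themselves nontrivial ingredients, so I would view the Proj-reduction route as the more tractable plan.
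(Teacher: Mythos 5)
Your proposal matches the paper's proof essentially step for step: reduce by density and continuity to FS data, approximate $\xi$ by rational Reeb fields, and invoke the global non-Archimedean Hodge index theorem of Boucksom--Jonsson on the projective quotient for the rational case before passing to the limit. The correspondence between the local and global Monge--Amp\`ere measures that you flag as the main obstacle is exactly what the paper delegates to \cite[Proposition 5.13]{wu}, so no new argument is needed there.
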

\begin{proof}
After possibly scaling by some positive constant, assume $u=\varphi-\psi$ for some $\varphi, \psi\in \cpsh(\xi)$. Pick $\varphi_i, \psi_i\in\fs(\xi_i)$ approximating $\varphi, \psi$ with $\xi_i$ rational. It then follows from~\cite[Proposition 3.5]{BJ18v1} and~\cite[Proposition 5.13]{wu} that the above inequality is true for all $\varphi_i, \psi_i$. 
We are done by letting $i\to \infty$.
\end{proof}

This allows us to introduce seminorms on $\dcpsh_\xi(X)$:
\begin{defn}
    For $u\in \dcpsh_\xi(X)$, and $\varphi_1, \cdots, \varphi_{n-2}\in \cpsh(\xi)$, define
    \[\|u\|_{(\varphi_1, \cdots, \varphi_{n-2})}\coloneqq \left(-\int_{X^{\an}} u d'd''u \wedge d'd''\varphi_1\wedge \cdots \wedge d'd''\varphi_{n-2}\wedge d'd''\varphi_\xi^+\right)^{\frac 12}.\]
    We will write $\|u\|_{(\varphi^j, \varphi'^{n-2-j})}$ if there are $j$ copies of $\varphi$, and $n-2-j$ copies of $\varphi'$.
\end{defn}
\begin{cor}[Cauchy-Schwarz Inequality]\label{Cor-CSInequality}
    Let $u,v\in \dcpsh_\xi(X)$, and $\varphi_1, \cdots, \varphi_{n-2}\in \cpsh(\xi)$. Then 
    \begin{align*}
        &\left|\int_{X^{\an}} u d'd''v \wedge d'd''\varphi_1\wedge \cdots \wedge d'd''\varphi_{n-2}\wedge d'd''\varphi_\xi^+\right|\leq \|u\|_{(\varphi_1, \cdots, \varphi_{n-2})}\|v\|_{(\varphi_1, \cdots, \varphi_{n-2})}.
    \end{align*}
\end{cor}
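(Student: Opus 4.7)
The plan is to apply the standard polarization trick for a symmetric nonnegative bilinear form. I would first regard
\[
B(u,v) \coloneqq -\int_{X^{\an}} u \, d'd''v \wedge d'd''\varphi_1\wedge \cdots \wedge d'd''\varphi_{n-2}\wedge d'd''\varphi_\xi^+
\]
as a form on $\dcpsh_\xi(X)$ with the auxiliary functions $\varphi_1,\ldots,\varphi_{n-2}\in\cpsh(\xi)$ fixed. Two ingredients already established suffice. First, the integration-by-parts formula (Proposition~\ref{prop-int-by-parts}) shows $B(u,v) = B(v,u)$, so $B$ is symmetric. Second, the Hodge Index Theorem together with the definition of the seminorm gives $B(u,u) = \|u\|_{(\varphi_1,\ldots,\varphi_{n-2})}^2 \geq 0$.

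Next, I would use that $\dcpsh_\xi(X)$ is an $\RR$-vector space, so $u + tv \in \dcpsh_\xi(X)$ for every $t\in\RR$. By the multilinearity part of Definition-Proposition~\ref{chap4-def-prop-MA-meas} combined with symmetry of $B$, expansion gives
\[
0 \;\leq\; B(u + tv,\, u + tv) \;=\; B(u,u) + 2t\, B(u,v) + t^2 B(v,v)
\]
for all $t\in\RR$. When $B(v,v)>0$, the nonpositivity of the discriminant of this quadratic in $t$ yields $B(u,v)^2 \leq B(u,u)\,B(v,v)$, which is exactly the desired inequality. In the degenerate case $B(v,v)=0$, the right-hand side reduces to the affine function $2t\,B(u,v) + B(u,u)$, which can only be nonnegative for every $t\in\RR$ if $B(u,v)=0$, and then the inequality holds trivially.

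I do not expect any substantive obstacle here: all of the work has been done in setting up the mixed Monge--Amp\`ere measures on $\dcpsh_\xi(X)$, proving symmetry by integration by parts, and establishing positivity by the Hodge Index Theorem. The only point requiring a small verification is that the quantities appearing after expansion---in particular $B(u+tv,u+tv)$---are genuinely covered by those earlier statements, which is immediate since $u+tv\in\dcpsh_\xi(X)$ and the $\varphi_i$ remain in $\cpsh(\xi)$ throughout.
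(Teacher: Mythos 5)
Your proof is correct and takes essentially the same route as the paper: the paper simply observes that the form $(u,v)\mapsto -\int u\,d'd''v\wedge d'd''\varphi_1\wedge\cdots\wedge d'd''\varphi_{n-2}\wedge d'd''\varphi_\xi^+$ is a symmetric semipositive bilinear form on $\dcpsh_\xi(X)$ (via Proposition~\ref{prop-int-by-parts} and the Hodge Index Theorem) and concludes. You have merely written out the standard discriminant argument that the paper leaves implicit.
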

\begin{proof}
    By the previous proposition, we have that the symmetric bilinear form on $\dcpsh_\xi(X)$ given by 
    \[(u, v)\mapsto -\int_{X^{\an}} u d'd''v \wedge d'd''\varphi_1\wedge \cdots \wedge d'd''\varphi_{n-2}\wedge d'd''\varphi_\xi^+\]
    is semipositive definite. Hence the inequality follows.
\end{proof}

\section{Energy functionals on continuous plurisubharmonic functions}\label{section: EnergyOnPSH}
\subsection{Monge--Amp\`ere Energy}
We are now in the place to extend the Monge--Amp\`ere operator defined in~\cite{wu} to the class of continuous psh functions. 
\begin{defn}
The Monge--Amp\`ere energy of a d-cpsh function $\varphi\in\dcpsh(X, \xi)$ with respect to $\psi\in \dcpsh(X, \xi)$ is defined as 
\begin{align}\label{eqn:ma}
\E_\xi(\varphi, \psi) \coloneqq \frac{1}{n\vol(\xi)} \sum_{j=0}^{n-1}\int_{X^{\an}} (\varphi- \psi) (d'd''\varphi)^j\wedge (d'd''\psi)^{n-1-j}\wedge d'd''\varphi_{\xi}^+. 
\end{align}
We will omit $\xi$ if it is clear from context. We will simply write $\E(\varphi)$ if $\psi = \varphi_\xi$.
\end{defn}


We list below some properties of the Monge--Amp\`ere energy.
\begin{prop}\label{chap4-prop-ma-properties}
Let $\varphi,\psi, \varphi_1, \cdots, \varphi_{n-1}\in \cpsh(\xi)$, $c\in\mathbb{R}$, $u\in \dcpsh_\xi(X)$
\begin{enumerate}[(1)]
    \item The Monge--Amp\`ere measure
    $$\ma(\varphi_1, \cdots, \varphi_{n-1}) :=\frac{1}{\vol(\xi)} d'd''\varphi_1\wedge \cdots \wedge d'd''\varphi_{n-1}\wedge d'd''\varphi_{\xi}^+$$
    is a Radon probability measure supported on $X_0$.
    \item The terms in (\ref{eqn:ma}) are non-increasing in $j$.
    \item The first variation of $\E(\varphi)$ is
    $$ \qquad \odv{\E(\varphi+ tu)}{t}_{t=0}^{} = \frac{1}{\vol(\xi)}\int_{X^{\an}} u (d'd''\varphi)^{n-1}\wedge d'd''\varphi_{\xi}^+.$$
    \item $\E(\varphi)-\E(\psi) = \E(\varphi, \psi)$.
    \item If $\varphi\leq \psi$, then $\E(\varphi)\leq \E(\psi)$.
    \item $\E$ is concave on $\dcpsh(X, \xi)$.
    \item $\E(\varphi+c) = E(\varphi)+c$.
    \item $\E_{\frac 1a \xi}(a\varphi) = a \E_\xi(\varphi)$ for all positive real number $a$.
\end{enumerate}
\end{prop}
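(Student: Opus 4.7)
My approach is to dispatch the formal items (1), (5), (7), (8) first and reduce the real content to (2), (3), (4), (6). Item (1) follows from Definition-Proposition~\ref{chap4-def-prop-MA-meas} for the Radon measure statement, together with weak continuity under local uniform convergence, which extends the probability measure property from the FS case (\cite[Corollary 5.7]{wu}, recalled in Section~\ref{section: prelim}). For (7), I would use that each mixed Monge--Amp\`ere measure is a probability measure, so the $n$ summands in $\E(\varphi+c)-\E(\varphi)$ each contribute $c/n$. For (8), using $\varphi_{a^{-1}\xi}=a\varphi_\xi$ and extracting the overall factor $a^{n+1}$ from the integrand, the claim reduces to the homogeneity $\vol(a^{-1}\xi)=a^{n}\vol(\xi)$ of degree $-n$ established in~\cite{wu}. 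Item (5) falls out of (4): each term of $\E(\varphi,\psi)$ is the nonpositive function $\varphi-\psi$ integrated against a positive Radon measure.

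For (2), writing $a_j$ for the $j$-th summand in the definition of $\E(\varphi,\psi)$, the identity
\[(d'd''\varphi)^{j}\wedge(d'd''\psi)^{n-1-j}-(d'd''\varphi)^{j+1}\wedge(d'd''\psi)^{n-2-j}=-d'd''(\varphi-\psi)\wedge(d'd''\varphi)^{j}\wedge(d'd''\psi)^{n-2-j}\]
turns $a_j-a_{j+1}$ into minus the Hodge-index form in $u=\varphi-\psi\in\dcpsh_\xi(X)$ against the positive weight $(d'd''\varphi)^{j}\wedge(d'd''\psi)^{n-2-j}\wedge d'd''\varphi_\xi^+$, hence nonnegative. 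For (3), I would differentiate each summand in $t$ via multilinearity, apply integration by parts (Proposition~\ref{prop-int-by-parts}) to move $d'd''u$ onto the factor $\varphi-\psi$, and observe that the resulting sums telescope, leaving only $\vol(\xi)^{-1}\int u\,(d'd''\varphi)^{n-1}\wedge d'd''\varphi_\xi^+$. For (4), one either checks the cocycle identity $\E(\varphi,\chi)=\E(\varphi,\psi)+\E(\psi,\chi)$ by matching first variations via (3) together with coincidence at $\varphi=\psi$, or expands both sides and pairs terms using Proposition~\ref{prop-int-by-parts}; specializing $\chi=\varphi_\xi$ yields (4). For (6), once (3) is available, the second $t$-derivative of $\E(\varphi+tu)$ with $u=\psi-\varphi$ equals
\[(n-1)\vol(\xi)^{-1}\int_{X^{\an}} u\,d'd''u\wedge(d'd''(\varphi+tu))^{n-2}\wedge d'd''\varphi_\xi^+,\]
which is nonpositive by the Hodge index theorem, and the segment $\varphi+tu$ stays in $\cpsh(\xi)$ by Proposition~\ref{chap4-prop-cpsh-vector-space}(5).

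The main point to watch is that integration by parts and the Hodge index theorem were set up in Section~\ref{section: cpsh} only for inputs in $\dcpsh_\xi(X)$: at each invocation I would verify that the test direction is $\xi$-invariant and lies in that class. This is automatic for the differences $\varphi-\psi$ with $\varphi,\psi\in\cpsh(\xi)$ and for the tangent directions used in (3) and (6), so no new analytic input beyond Section~\ref{section: cpsh} is required. A secondary technical wrinkle is that (7) needs total mass $1$ for \emph{all} mixed Monge--Amp\`ere measures, not just the pure one; this follows from the multilinearity in Definition-Proposition~\ref{chap4-def-prop-MA-meas} combined with (1) applied entry by entry.
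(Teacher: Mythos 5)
Your proposal is correct and follows essentially the same route as the paper: (1), (7), (8) from Definition-Proposition~\ref{chap4-def-prop-MA-meas} and homogeneity of $\vol$, (2) and (6) from integration by parts plus the Hodge index theorem, (3) by telescoping after integration by parts, and (4) by matching first variations along the segment (the paper's $f'\equiv g'$ argument is your cocycle identity in disguise), with (5) following from (4). Your extra care about total mass $1$ for the mixed measures is a reasonable refinement of a point the paper leaves implicit.
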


\begin{proof} By Definition-Proposition~\ref{chap4-def-prop-MA-meas}, we have proved (1), and (7) follows immediately. 
To prove (2), we apply Proposition~\ref{prop-int-by-parts}:
\begin{align*}
    &\int(\varphi-\psi) (d'd''\varphi)^j\wedge (d'd''\psi)^{n-1-j}\wedge d'd''\varphi_\xi^+ - \int(\varphi-\psi) (d'd''\varphi)^{j+1}\wedge (d'd''\psi)^{n-j-2}\wedge d'd''\varphi_\xi^+\\
    &=-\int(\varphi-\psi) d'd''(\varphi-\psi)\wedge (d'd''\varphi)^j\wedge (d'd''\psi)^{n-j-2}\wedge d'd''\varphi_\xi^+\geq 0.
\end{align*}
The third property is a direct calculation.
\begin{align*}
    &n\vol(\xi)(\E(\varphi+ tu)-\E(\varphi))\\
    &= \sum_{j=0}^{n-1} t(\int_{X^{\an}} u (d'd''\varphi)^j\wedge (d'd''\varphi_\xi)^{n-j-1}\wedge d'd''\varphi_\xi^+ \\
    &\indent + j\int_{X^{\an}} (\varphi-\varphi_\xi)(d'd''u)\wedge (d'd''\varphi)^{j-1}\wedge (d'd''\varphi_\xi)^{n-j-1}\wedge d'd''\varphi_\xi^+ +O(t^2) )\\
    &= t \sum_{j=0}^{n-1}(\int_{X^{\an}} u (d'd''\varphi)^j\wedge (d'd''\varphi_\xi)^{n-j-1}\wedge d'd''\varphi_\xi^+ \\
    &\indent + j\int_{X^{\an}} u(d'd''(\varphi-\varphi_\xi))\wedge (d'd''\varphi)^{j-1}\wedge (d'd''\varphi_\xi)^{n-j-1}\wedge d'd''\varphi_\xi^+ +O(t^2) )\\
    &= tn\int_{X^{\an}} u (d'd''\varphi)^{n-1}\wedge  d'd''\varphi_{\xi}^+ + O(t^2),
\end{align*}
where the second to last equality follows from integration by parts. 

Similar calculation shows
\[\qquad \odv[order=2]{\E(\varphi+ tu)}{t} = \frac{1}{\vol(\xi)}\int_{X^{\an}} u (d'd''(\varphi+tu))^{n-1}\wedge d'd''\varphi_\xi^+.\]
Now let $f(t) = \E(\varphi+t(\psi-\varphi))$, and $g(t):= \E(\varphi+t(\psi-\varphi),\psi)$ for $t\in [0,1]$. Then $f(0) = \E(\varphi)$, $f(1) = \E(\psi)$ and $g(0) = \E(\varphi,\psi)$, $g(1) = 0$. Further, the above calculation shows that 
$f' \equiv g'$ on $t\in [0,1]$.
Hence
\[\E(\varphi)-\E(\psi) = f(0)-f(1) = g(0)-g(1) = \E(\varphi, \psi).\]
This proves (4), and (5) follows from (4).
To get (6), we use the Hodge index theorem:
\[\qquad \odv[order=2]{\E(\varphi+tu)}{t} = \frac{n-1}{\vol(\xi)}\int_{X^{\an}} u d'd''u\wedge (d'd''(\varphi+tu))^{n-2}\wedge d'd''\varphi_\xi^+\leq 0.\]
Finally (8) follows from direct calculation and the fact that $\vol(\xi)$ is homogeneous of degree $-n$.
\end{proof}
\subsection{The I and J functionals}
We next introduce two related functionals defined using Monge--Amp\`ere measures. 
\begin{defn}
    For $\varphi, \psi\in \cpsh(\xi)$, we define 
    \[\I(\varphi, \psi)\coloneqq \int_{X^{\an}} (\varphi-\psi)(\ma(\psi)-\ma(\varphi)),\]
    and 
    \[\J_{\psi}(\varphi)\coloneqq \int_{X^{\an}} (\varphi-\psi) \ma(\psi) - \E(\varphi, \psi).\]
    We will simply write $\J(\varphi)$ if $\psi = \varphi_\xi$.
\end{defn}

\begin{prop}[{\cite[Lemma 3.26]{BJ18}}]\label{prop-IJ-formula}
    For $\varphi, \psi\in \cpsh(\xi)$, we have
    \begin{enumerate}
        \item $\I(\varphi, \psi)=\vol(\xi)^{-1} \sum_{j=0}^{n-2} \|\varphi-\psi\|^2_{(\varphi^j, \psi^{n-2-j})};$
        \item $\J_\psi(\varphi) = \vol(\xi)^{-1} \sum_{j=0}^{n-2}\frac{j+1}{n} \|\varphi-\psi\|^2_{(\varphi^j, \psi^{n-2-j})};$
        \item $\frac{1}{n} \I(\varphi, \psi)\leq J_\psi(\varphi)\leq \frac{n-1}{n} \I(\varphi,\psi).$
        \item $\I, \J$ are convex functionals on $\cpsh(\xi)$.
    \end{enumerate}
\end{prop}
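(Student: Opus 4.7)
The strategy is to establish the squared-norm representations (1) and (2) by direct algebraic manipulation and integration by parts, from which (3) and (4) follow as quick corollaries. Both (1) and (2) ultimately rest on the same telescoping identity, so the bulk of the work is bookkeeping.

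For (1), I would start from $\I(\varphi,\psi) = \vol(\xi)^{-1}\int_{X^{\an}}(\varphi-\psi)\bigl[(d'd''\psi)^{n-1} - (d'd''\varphi)^{n-1}\bigr]\wedge d'd''\varphi_\xi^+$ and apply the telescoping
\[
(d'd''\psi)^{n-1} - (d'd''\varphi)^{n-1} = -\sum_{j=0}^{n-2} d'd''(\varphi-\psi)\wedge(d'd''\varphi)^j\wedge(d'd''\psi)^{n-2-j},
\]
obtained by writing the difference as $\sum_j (T_{j+1}-T_j)$ with $T_j = (d'd''\varphi)^j\wedge(d'd''\psi)^{n-1-j}$ and noting each consecutive difference factors through $d'd''(\varphi-\psi)$. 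Each resulting summand is, by definition, precisely $\|\varphi - \psi\|^2_{(\varphi^j,\psi^{n-2-j})}$, giving the formula.

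For (2), the plan is to use the same $T_j$'s: $\vol(\xi)\ma(\psi)$ corresponds to $T_0\wedge d'd''\varphi_\xi^+$ and the $n$ summands defining $n\vol(\xi)\E(\varphi,\psi)$ involve $T_j\wedge d'd''\varphi_\xi^+$, so
\[
\J_\psi(\varphi) = \frac{1}{n\vol(\xi)}\sum_{j=0}^{n-1}\int(\varphi-\psi)(T_0 - T_j)\wedge d'd''\varphi_\xi^+.
\]
Applying the telescoping $T_0 - T_j = -\sum_{k=0}^{j-1}d'd''(\varphi-\psi)\wedge(d'd''\varphi)^k\wedge(d'd''\psi)^{n-2-k}$ and then integration by parts (Prop.~\ref{prop-int-by-parts}) produces a double sum of squared norms; swapping the summation order and counting multiplicities yields the claimed weighted combination. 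For (3), the inequalities are immediate since each coefficient lies in $[1/n,(n-1)/n]$ and each squared norm is nonnegative by the Hodge index theorem.

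For (4), convexity of $\J_\psi(\cdot)$ in $\varphi$ for fixed $\psi$ is immediate from $\J_\psi(\varphi) = \int(\varphi-\psi)\ma(\psi) + \E(\psi) - \E(\varphi)$, which is affine in $\varphi$ plus the convex $-\E$ (concavity of $\E$ is Prop.~\ref{chap4-prop-ma-properties}(6)); convexity of $\I(\cdot,\psi)$ then follows from the symmetric decomposition $\I(\varphi,\psi) = \J_\psi(\varphi) + \J_\varphi(\psi)$, verified by a one-line expansion, combined with the analogous convexity of $\J_\varphi$ in its first argument. The main obstacle will be the combinatorial bookkeeping in (2)---signs and the re-indexing needed to reach the stated coefficient $(j+1)/n$ up to the symmetry $\|\varphi-\psi\|^2_{(\varphi^j,\psi^{n-2-j})}$ of the multiset. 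A secondary local-case subtlety is that each application of Prop.~\ref{prop-int-by-parts} requires the factors to lie in $\dcpsh_\xi(X)$ or $\dcpsh(X,\xi)$ with polarizations summing correctly under $\cpsh(a\xi)+\cpsh(b\xi)\subset\cpsh(\tfrac{ab}{a+b}\xi)$ from Prop.~\ref{chap4-prop-cpsh-vector-space}; a suitable rescaling of $\varphi-\psi$ handles this, and is essentially routine.
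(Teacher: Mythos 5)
Your overall route is the same as the paper's: the paper writes out exactly your telescoping-plus-integration-by-parts computation for (1), then dismisses (2) as ``similar,'' derives (3) from the coefficients, and gets (4) from concavity of $\E$. So the strategy is fine; two details in your write-up deserve attention.

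First, in (2): carrying out your double sum honestly, with $T_j=(d'd''\varphi)^j\wedge(d'd''\psi)^{n-1-j}$ one gets $T_0-T_j=-\sum_{k=0}^{j-1}d'd''(\varphi-\psi)\wedge(d'd''\varphi)^k\wedge(d'd''\psi)^{n-2-k}$, and summing over $j=0,\dots,n-1$ the term with subscript $(\varphi^k,\psi^{n-2-k})$ occurs $n-1-k$ times, so you land on $\J_\psi(\varphi)=\vol(\xi)^{-1}\sum_{k=0}^{n-2}\tfrac{n-1-k}{n}\|\varphi-\psi\|^2_{(\varphi^k,\psi^{n-2-k})}$. This is \emph{not} the displayed formula with weight $\tfrac{j+1}{n}$ on $\|\varphi-\psi\|^2_{(\varphi^j,\psi^{n-2-j})}$ --- reindexing $j=n-2-k$ turns the weight into $\tfrac{j+1}{n}$ but simultaneously swaps $\varphi$ and $\psi$ in the subscript, and the multiset $(\varphi^j,\psi^{n-2-j})$ has no symmetry under $j\mapsto n-2-j$, so the ``symmetry of the multiset'' you invoke does not exist. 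What your computation actually proves is that the displayed $(j+1)/n$ formula computes $\J_\varphi(\psi)$ (consistent with $\I=\J_\psi(\varphi)+\J_\varphi(\psi)$ and (1)); the discrepancy is in the statement's indexing, not in your algebra, and it is harmless for (3) and everything downstream since all weights lie in $[1/n,(n-1)/n]$ either way. Just don't paper over it with a nonexistent symmetry.

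Second, in (4): your affine-plus-convex decomposition correctly gives convexity of $\varphi\mapsto\J_\psi(\varphi)$ (which is the convexity actually used later, in the proof of the quasi-triangle inequality). But reducing convexity of $\I(\cdot,\psi)$ to ``the analogous convexity of $\J_\varphi$ in its first argument'' is not analogous at all: as a function of $\varphi$, $\J_\varphi(\psi)=-\int(\varphi-\psi)\ma(\varphi)+\E(\varphi)-\E(\psi)$ has a non-affine first term and a \emph{concave} second term, so the same argument does not apply and you would need a separate justification. The paper's own one-line proof of (4) is no more detailed, but if you want (4) in full you must either prove convexity of $\varphi\mapsto\J_\varphi(\psi)$ directly or argue convexity of $\I$ by another route.
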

\begin{proof}
    This is a direct computation as in Proposition~\ref{chap4-prop-ma-properties} (2):
    \begin{align*}
        &\vol(\xi)\I(\varphi, \psi) = \vol(\xi)\int (\varphi-\psi)(\ma(\psi)-\ma(\varphi))\\
        &= \sum_{j=1}^{n-2} \left(\int (\varphi-\psi) (d'd''\psi)^j\wedge (d'd''\varphi)^{n-1-j}\wedge d'd''\varphi_\xi^+ - \int (\varphi-\psi) (d'd''\psi)^{j-1}\wedge (d'd''\varphi)^{n-j}\wedge d'd''\varphi_\xi^+\right)\\
        &= \sum_{j=0}^{n-2} \int (\varphi-\psi) d'd''(\psi-\varphi)\wedge (d'd''\psi)^j\wedge (d'd''\varphi)^{n-2-j}\wedge d'd''\varphi_\xi^+
        = \sum_{j=0}^{n-2} \|\varphi-\psi\|_{(\varphi^j, \psi^{n-2-j})}.
    \end{align*}
    This proves (1). The proof of (2) is similar, and (3) follows from (1) and (2). Finally, (4) is a consequence of (3) and the fact that $E$ is concave.
\end{proof}

\begin{thm}[{\cite[Theorem 3.31]{BJ18}, \cite[Theorem 1.8]{BBEGZ}}, Quasi-triangle inequality]\label{thm-quasi-triangle-inequality}
    For $\varphi_1, \varphi_2, \varphi_3\in \cpsh(\xi)$,
    we have
    \[\I(\varphi_1, \varphi_2)\lesssim \I(\varphi_1, \varphi_3)+\I(\varphi_3,\varphi_1).\]
\end{thm}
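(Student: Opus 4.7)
The strategy follows the pattern of the global case in~\cite[Theorem 3.31]{BJ18}, using the sum-of-squares representation of $\I$ together with iterated applications of Cauchy--Schwarz and integration by parts. By Proposition~\ref{prop-IJ-formula}(1),
\[
\vol(\xi)\I(\varphi_1,\varphi_2) = \sum_{j=0}^{n-2} \|\varphi_1-\varphi_2\|^2_{(\varphi_1^j,\varphi_2^{n-2-j})}.
\]
Writing $\varphi_1-\varphi_2 = (\varphi_1-\varphi_3) - (\varphi_2-\varphi_3)$ and applying the elementary inequality $(a+b)^2 \leq 2a^2 + 2b^2$ to each seminorm (valid since each $\|\cdot\|_{(\cdots)}$ arises from a semipositive definite bilinear form, by the Hodge index theorem), I reduce the problem to bounding mixed-reference quantities of the form $\|\varphi_1-\varphi_3\|^2_{(\varphi_1^j,\varphi_2^{n-2-j})}$ and $\|\varphi_2-\varphi_3\|^2_{(\varphi_1^j,\varphi_2^{n-2-j})}$ by a dimensional constant times $\vol(\xi)\bigl(\I(\varphi_1,\varphi_3) + \I(\varphi_3,\varphi_2)\bigr)$.

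The main step is a reference-swap argument that replaces the $\varphi_2$-factors appearing in the reference current by $\varphi_3$-factors (and symmetrically for the second family). To swap one slot, I expand
\[
d'd''\varphi_2 = d'd''\varphi_3 + d'd''(\varphi_2-\varphi_3)
\]
inside the wedge product $(d'd''\varphi_2)^{n-2-j}$ via multilinearity of the mixed Monge--Amp\`ere measure (Definition-Proposition~\ref{chap4-def-prop-MA-meas}). Every term containing a factor of $d'd''(\varphi_2-\varphi_3)$ in the reference is treated by integration by parts (Proposition~\ref{prop-int-by-parts}), transferring the $d'd''$ onto the outer $\varphi_1-\varphi_3$ (or onto itself), and then by the Cauchy--Schwarz inequality of Corollary~\ref{Cor-CSInequality}, which factors the resulting integral into a product of two seminorms: one of $\varphi_1-\varphi_3$ with a reference purely involving $\{\varphi_1,\varphi_3\}$, and one of $\varphi_2-\varphi_3$ with a reference purely involving $\{\varphi_2,\varphi_3\}$. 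An AM--GM step then turns the product into a sum, which is absorbed into $\I(\varphi_1,\varphi_3) + \I(\varphi_3,\varphi_2)$.

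The main obstacle is the combinatorial bookkeeping: each swap step generates cross-terms with mixed $(\varphi_1,\varphi_2,\varphi_3)$ references, and the auxiliary Cauchy--Schwarz norms must themselves be controlled by pure-reference seminorms. I expect this to be handled by an induction on the number of ``non-pure'' slots in the reference current: at each stage, one application of multilinearity plus Cauchy--Schwarz reduces the number of $\varphi_2$-slots (in the first family) or $\varphi_1$-slots (in the second family) by one, at the cost of a dimensional constant. After at most $n-2$ iterations, every resulting seminorm has a pure reference in $\{\varphi_1,\varphi_3\}$ or $\{\varphi_2,\varphi_3\}$, and the sum-of-squares expressions for $\vol(\xi)\I(\varphi_1,\varphi_3)$ and $\vol(\xi)\I(\varphi_3,\varphi_2)$ provided by Proposition~\ref{prop-IJ-formula}(1) deliver the claimed bound.
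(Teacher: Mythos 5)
Your raw ingredients (split $\varphi_1-\varphi_2=(\varphi_1-\varphi_3)-(\varphi_2-\varphi_3)$, swap reference slots one at a time via multilinearity and integration by parts, control cross terms by Cauchy--Schwarz) are indeed the mechanism the paper uses, but the key step as you describe it does not go through. Corollary~\ref{Cor-CSInequality} bounds $\left|\int u\, d'd''v\wedge T\wedge d'd''\varphi_\xi^+\right|$ by $\|u\|_T\|v\|_T$, where \emph{both} factors carry the \emph{same} reference current $T$, and $T$ must consist of $d'd''$ of genuinely cpsh functions for the bilinear form to be semipositive. So one swap produces a product $\|\varphi_1-\varphi_3\|_{T'}\|\varphi_2-\varphi_3\|_{T'}$ with a shared, still-mixed reference $T'$; you cannot arrange for one factor to have a reference purely in $\{\varphi_1,\varphi_3\}$ and the other purely in $\{\varphi_2,\varphi_3\}$, and terms with two or more copies of $d'd''(\varphi_2-\varphi_3)$ in the reference are not even amenable to Cauchy--Schwarz. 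After AM--GM you are left with quantities such as $\|\varphi_2-\varphi_3\|^2$ measured against references containing $\varphi_1$, whose only available control involves $\J_{\varphi_1}(\varphi_2)\approx\I(\varphi_1,\varphi_2)$ --- the very quantity being estimated. Your claim that everything is ``absorbed into $\I(\varphi_1,\varphi_3)+\I(\varphi_3,\varphi_2)$'' silently drops this self-referential term, and that is the gap.

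This is precisely why the paper does not attempt a direct linear estimate. It first reduces to the single midpoint reference $\psi=\tfrac12(\varphi_1+\varphi_2)$ (using $d'd''\varphi_i\leq 2\,d'd''\psi$), and then proves the H\"older-type bound of Lemma~\ref{lem-norm-holder}: $\|\varphi-\varphi'\|^2_{(\psi^{n-2})}\lesssim\vol(\xi)\,\I(\varphi,\varphi')^{\beta_n}\max\{\J_\varphi(\psi),\J_{\varphi'}(\psi)\}^{1-\beta_n}$ with $\beta_n=2^{-(n-2)}$. The recursion $b_{p+1}\leq b_p+C\sqrt{Bb_p}$ coming from one Cauchy--Schwarz per swap is what halves the exponent at each step. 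In the final assembly the $\max\J$ term is bounded by $\max\{\I(\varphi_1,\varphi_2),\I(\varphi_1,\varphi_3),\I(\varphi_2,\varphi_3)\}$, so $\I(\varphi_1,\varphi_2)$ genuinely reappears on the right --- but only to the power $1-\beta_n<1$, which is what allows it to be divided out. To repair your argument you would either have to reproduce this H\"older structure together with the final absorption, or carry an explicit small parameter $\epsilon$ through every weighted AM--GM step so that the total coefficient of $\I(\varphi_1,\varphi_2)$ on the right-hand side is strictly less than $1$; neither appears in your sketch.
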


\begin{lem}[{\cite[Lemma 3.33]{BJ18}}]\label{lem-norm-holder}
    For $\varphi, \varphi', \psi\in \cpsh(\xi)$, we have
    \[\|\varphi-\varphi'\|^2_{(\psi^{n-2})}\lesssim \vol(\xi)\I(\varphi, \varphi')^{\beta_n}\max\{\J_\varphi(\psi), \J_{\varphi'}(\psi)\}^{1-\beta_n},\]
    for $\beta_n = \frac{1}{2^{n-2}}$.
\end{lem}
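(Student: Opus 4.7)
The plan follows the strategy of~\cite[Lemma 3.33]{BJ18}, which relies on iterated Cauchy--Schwarz. Write $u := \varphi-\varphi'$, and put $A := \vol(\xi)\I(\varphi,\varphi')$ and $B := \vol(\xi)\max\{\J_\varphi(\psi),\J_{\varphi'}(\psi)\}$, so the claim becomes $\|u\|^2_{(\psi^{n-2})} \lesssim A^{\beta_n} B^{1-\beta_n}$.

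Two coarse bounds are immediate. By Proposition~\ref{prop-IJ-formula}(1), $\|u\|^2_{(\varphi^a,\varphi'^{n-2-a})} \leq A$ for every $a$; in particular this already settles the $n=2$ case, where $\beta_2 = 1$. Combined with the triangle inequality $\|u\|_{(\psi^{n-2})} \leq \|\varphi-\psi\|_{(\psi^{n-2})} + \|\varphi'-\psi\|_{(\psi^{n-2})}$ and Proposition~\ref{prop-IJ-formula}(2) applied to $\J_\varphi(\psi)$ and $\J_{\varphi'}(\psi)$, one also obtains $\|u\|^2_{(\psi^{n-2})} \lesssim B$. Since $B \leq A^{\beta_n}B^{1-\beta_n}$ whenever $A \geq B$, I may assume $A \leq B$ and try to improve the latter bound.

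The key step is to trade a $d'd''\psi$ factor in the weight for $d'd''\varphi$ or $d'd''\varphi'$. Writing $d'd''\psi = d'd''\varphi + d'd''(\psi-\varphi)$ in one slot of $(d'd''\psi)^k$ gives
\[\|u\|^2_{(\psi^k,\vec\tau)} = \|u\|^2_{(\varphi,\psi^{k-1},\vec\tau)} + T,\]
with $T = -\int u\, d'd''u \wedge d'd''(\psi-\varphi) \wedge (d'd''\psi)^{k-1} \wedge \omega_{\vec\tau} \wedge d'd''\varphi_\xi^+$ the residual integral. Applying Proposition~\ref{prop-int-by-parts} twice, and then splitting $d'd''u = d'd''\varphi - d'd''\varphi'$ on the remaining $d'd''u$ factor, Corollary~\ref{Cor-CSInequality} yields
\[|T| \leq \sum_{\sigma\in\{\varphi,\varphi'\}}\|u\|_{(\sigma,\psi^{k-1},\vec\tau)}\cdot \|\psi-\varphi\|_{(\sigma,\psi^{k-1},\vec\tau)}.\]
On pure weights the $\|\psi-\varphi\|$ factor is bounded by $\sqrt{nB}$ via Proposition~\ref{prop-IJ-formula}(2); mixed weights are handled by the triangle inequality $\|\psi-\varphi\|_{(\cdot)} \leq \|\psi-\varphi'\|_{(\cdot)} + \|u\|_{(\cdot)}$ together with the iterated substitution $d'd''\varphi' = d'd''\varphi - d'd''u$, which reduces any $\|\psi-\varphi'\|^2_{(\varphi^a,\varphi'^b,\psi^c)}$ to a sum of pure-weight terms plus $d'd''u$-weighted errors that can be reabsorbed into the recursion. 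Setting $M_k := \max_{\vec\tau}\|u\|^2_{(\psi^k,\vec\tau)}$ over $\vec\tau \in \{\varphi,\varphi'\}^{n-2-k}$, this produces the recursion
\[M_k \lesssim M_{k-1} + \sqrt{M_{k-1}\,B}.\]

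Iterating in the regime $A \leq B$, one verifies inductively that $M_k \leq C_k A^{1/2^k}B^{1-1/2^k}$: since this hypothesis gives $M_{k-1} \leq C_{k-1}B$, the additive $M_{k-1}$ term is dominated by $\sqrt{C_{k-1}}\sqrt{M_{k-1}B}$, and the ``multiplicative'' half of the recursion halves the $A$-exponent at each step. Taking $k = n-2$ gives $\|u\|^2_{(\psi^{n-2})} \leq M_{n-2} \lesssim A^{\beta_n}B^{1-\beta_n}$. The main obstacle is precisely the control of the mixed-weight norms $\|\psi-\varphi\|^2_{(\varphi^a,\varphi'^b,\psi^c)}$ with $b \geq 1$ (and the analogous ones for $\psi-\varphi'$), which do not appear directly in Proposition~\ref{prop-IJ-formula}(2) and must be reduced to the pure-weight setting in parallel with the main induction; this bookkeeping, while unilluminating, is what makes the argument go through.
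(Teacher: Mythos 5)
Your overall strategy is the right one and matches the paper's (and \cite[Lemma 3.33]{BJ18}'s): reduce to $A\le B$, run an induction that swaps one $\psi$-slot at a time, bound the error by Cauchy--Schwarz, and halve the $A$-exponent at each step. The base case $M_0\le A$ and the bound $\|u\|^2_{(\psi^{n-2})}\lesssim B$ are both correct, and the arithmetic of the recursion $M_k\lesssim M_{k-1}+\sqrt{M_{k-1}B}$ in the regime $A\le B$ is fine. The problem is exactly the one you flag in your last sentence, and it is not mere bookkeeping: after Cauchy--Schwarz you must bound $\|\psi-\varphi\|_{(\sigma,\psi^{k-1},\vec\tau)}$ for tuples $\vec\tau$ that mix $\varphi$ and $\varphi'$, and these norms are not among the terms that Proposition~\ref{prop-IJ-formula} controls. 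Your proposed fix --- substituting $d'd''\varphi'=d'd''\varphi-d'd''u$ in a weight slot --- places the non-positive current $d'd''u$ in a weight position, which takes you outside the scope of Corollary~\ref{Cor-CSInequality} (whose weights must be cpsh); handling the resulting terms requires a further integration by parts and a secondary induction whose termination and interaction with the main recursion you do not set up. As written, the essential step is asserted rather than proved.

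The paper avoids the issue entirely with one device you are missing: it takes the \emph{midpoint} $\phi=\tfrac12(\varphi+\varphi')$ as the auxiliary weight and sets $b_p=\|u\|^2_{(\psi^p,\phi^{n-2-p})}$, so only the single family $\{b_p\}$ is tracked rather than all tuples in $\{\varphi,\varphi'\}^{n-2-k}$. Since $d'd''\varphi\le 2\,d'd''\phi$ and $d'd''\varphi'\le 2\,d'd''\phi$ as positive currents, the stray $\varphi$- or $\varphi'$-slot produced by splitting $d'd''u=d'd''\varphi-d'd''\varphi'$ is immediately absorbed into a $\phi$-slot (up to a factor of $2$), giving $\|u\|_{(\sigma,\psi^p,\phi^{n-3-p})}\lesssim\sqrt{b_p}$; and the companion factor is $\|\psi-\phi\|_{(\sigma,\psi^p,\phi^{n-3-p})}\lesssim\sqrt{\vol(\xi)\,\I(\psi,\phi)}\lesssim\sqrt{B}$ directly from Proposition~\ref{prop-IJ-formula}(1) and convexity, with no mixed-weight norms ever appearing. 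If you want to keep your formulation, the cleanest repair is to dominate every $\varphi$- and $\varphi'$-slot in the problematic norms by $2\,d'd''\phi$ and then split $\psi-\varphi=(\psi-\phi)+\tfrac12 u$; but at that point you have reconstructed the paper's argument.
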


\begin{proof}
    Put $\phi = \frac 12(\varphi+\varphi')$, $u = \varphi-\varphi'$, $A = \vol(\xi)\I(\varphi, \varphi')$ and $B = \vol(\xi) \max\{\J_\varphi(\psi), \J_{\varphi'}(\psi)\}$. For $0\leq p\leq n-2$, set
    \[b_p\coloneqq \|u\|_{(\psi^p, \phi^{n-2-p})}^2.\]
    It is clear that $b_0\leq A$. Writing $u=(\varphi-\psi)+(\psi-\varphi')$ yields
    \[b_{n-2}\leq \left(\|\varphi-\psi\|_{(\psi^{n-2})} +\|\varphi'-\psi\|_{(\psi^{n-2})}\right)^2\lesssim B,\]
    where the last inequality follows from Proposition~\ref{prop-IJ-formula}. If $A\geq B$, then $b_{n-2}\lesssim B\leq A^{\beta_n}B^{1-\beta_n}$, which is what we want. From now on, assume $A\leq B$, and we proceed by induction on $p$ to show that for each $0\leq p\leq n-2$, one has
    \[b_p\lesssim A^{\frac{1}{2^p}} B^{1-\frac{1}{2^p}}.\]
    Note that the base case $p=0$ is true, and it suffices to do the inductive step. For $0\leq p \leq n-3$, we have
    \begin{align*}
        b_{p+1}-b_p &= -\int u d'd''u (d'd''\psi)^p\wedge (d'd''\phi)^{n-3-p}\wedge d'd''(\psi-\phi)\wedge d'd''\varphi_\xi^+\\
        &= -\int u d'd''(\psi-\phi)\wedge d'd''\varphi\wedge (d'd''\psi)^p\wedge (d'd''\phi)^{n-3-p}\wedge d'd''\varphi_\xi^+\\
        &+ \int u d'd''(\psi-\phi)\wedge d'd''\varphi'\wedge (d'd''\psi)^p\wedge (d'd''\phi)^{n-3-p}\wedge d'd''\varphi_\xi^+\\
        &=: I+II.
    \end{align*}
    By Cauchy--Schwarz, and the fact that $d'd''\varphi\leq 2d'd''\phi$ as currents, we get
    \[|I|\lesssim \|u\|_{(\varphi, \psi^p, \phi^{n-3-p})}\|\psi-\phi\|_{(\varphi, \psi^p, \phi^{n-3-p})} \lesssim \sqrt{b_p} \sqrt{I(\psi, \phi)}\lesssim \sqrt{Bb_p}.\]
    For the same reason, we have the same bound for $|II|$ as well, and so 
    \[b_{p+1}-b_p\lesssim \sqrt{Bb_p}.\]
    Now by induction hypothesis, we have
    \begin{align*}
        b_{p+1}\lesssim b_p+\sqrt{Bb_p}&\lesssim A^{\frac{1}{2^p}} B^{1-\frac{1}{2^p}} + A^{\frac{1}{2^{p+1}}} B^{1-\frac{1}{2^{p+1}}} \\
        &=A^{\frac{1}{2^{p+1}}} B^{1-\frac{1}{2^{p+1}}}\left(\left(\frac{A}{B}\right)^{\frac{1}{2^{p+1}}}+1 \right)\lesssim A^{\frac{1}{2^{p+1}}} B^{1-\frac{1}{2^{p+1}}},
    \end{align*}
    where we have used $A\leq B$ in the last inequality. Setting $p=n-2$, we are done.
\end{proof}

\begin{proof}[Proof of Theorem~\ref{thm-quasi-triangle-inequality}]
    Let $\psi = \frac{\varphi_1+\varphi_2}{2}$. Direct calculation shows
    \begin{align*}
        \vol(\xi)\I(\varphi_1, \varphi_2)&\lesssim \|\varphi_1-\varphi_2\|_{(\psi^{n-2})}\lesssim \max\{\|\varphi_1-\varphi_3\|_{(\psi^{n-2})},\|\varphi_3-\varphi_2\|_{(\psi^{n-2})}\}.
    \end{align*}
    By Lemma~\ref{lem-norm-holder} and convexity, we have for $i=1,2$,
    \begin{align*}
    \|\varphi_i-\varphi_3\|_{(\psi^{n-2})} &\lesssim \vol(\xi)\I(\varphi_i, \varphi_3)^{\beta_n}\max\{\J_{\varphi_i}(\psi), \J_{\varphi_3}(\psi)\}^{1-\beta_n}\\
    &\lesssim \vol(\xi)\I(\varphi_i, \varphi_3)^{\beta_n}\max\{\I(\varphi_1, \varphi_2), \I(\varphi_3, \varphi_1), \I(\varphi_3, \varphi_2)\}^{1-\beta_n}.
    \end{align*}
    Putting these together, we have
    \[\I(\varphi_1, \varphi_2)\lesssim \max\{I(\varphi_1, \varphi_2), \I(\varphi_3, \varphi_1), \I(\varphi_3, \varphi_2)\}^{1-\beta_n}\max\{I(\varphi_1, \varphi_3), \I(\varphi_2, \varphi_3)\}^{\beta_n}.\]
    Hence the result follows.
    
\end{proof}

\begin{cor}[{\cite[Corollary 3.34]{BJ18}}]\label{cor-norm-holder}
    Let $\varphi, \varphi', \psi_1, \cdots, \psi_{n-2}\in \cpsh(\xi)$. Then
    \[\|\varphi-\varphi'\|^2_{(\psi_1, \cdots, \psi_{n-2})}\lesssim \vol(\xi) \I(\varphi, \varphi')^{\beta_n}\max\{\J(\varphi), \J(\varphi'), \J(\psi_1), \cdots, \J(\psi_{n-2})\}^{1-\beta_n},\]
    for $\beta_n = \frac{1}{2^{n-2}}$.
\end{cor}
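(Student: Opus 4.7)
The plan is to reduce to the diagonal case of Lemma~\ref{lem-norm-holder} by an averaging argument, and then to control the $\J_\psi$-terms by the asserted maximum using the quasi-triangle inequality and convexity of $\J$. Set $u \coloneqq \varphi-\varphi'$ and $\psi \coloneqq \tfrac{1}{n-2}\sum_{i=1}^{n-2}\psi_i$; by convexity of $\cpsh(\xi)$ (Proposition~\ref{chap4-prop-cpsh-vector-space}(5)) this lies in $\cpsh(\xi)$.

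The first and main step I would establish is
$$\|u\|^2_{(\psi_1, \ldots, \psi_{n-2})} \lesssim \|u\|^2_{(\psi^{n-2})}. \qquad (\star)$$
Expanding $(d'd''\psi)^{n-2} = (n-2)^{-(n-2)}\bigl(\sum_i d'd''\psi_i\bigr)^{n-2}$ via the multinomial formula, every summand is a wedge product of positive currents and hence positive; isolating the term with multi-index $(1,\ldots,1)$ produces the inequality of positive currents
$$d'd''\psi_1 \wedge \cdots \wedge d'd''\psi_{n-2} \leq \tfrac{(n-2)^{n-2}}{(n-2)!}(d'd''\psi)^{n-2}.$$
Wedging with the positive current $d'd''\varphi_\xi^+$ and recognizing the norm squared as a pairing of a positive form built from $u$ against this mixed current (via integration by parts, Proposition~\ref{prop-int-by-parts}) gives $(\star)$.

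With $(\star)$ in hand, Lemma~\ref{lem-norm-holder} applied to $\psi$ gives
$$\|u\|^2_{(\psi^{n-2})} \lesssim \vol(\xi)\,\I(\varphi,\varphi')^{\beta_n}\,\max\{\J_\varphi(\psi),\J_{\varphi'}(\psi)\}^{1-\beta_n}.$$
To replace $\psi$ by the $\psi_i$ in the maximum, I would invoke Proposition~\ref{prop-IJ-formula}(3) together with Theorem~\ref{thm-quasi-triangle-inequality} (with $\varphi_\xi$ as the intermediate point):
$$\J_\varphi(\psi) \lesssim \I(\varphi,\psi) \lesssim \I(\varphi,\varphi_\xi) + \I(\varphi_\xi,\psi) \lesssim \J(\varphi) + \J(\psi),$$
and by convexity of $\J$ (Proposition~\ref{prop-IJ-formula}(4)) one has $\J(\psi) \leq \tfrac{1}{n-2}\sum_i \J(\psi_i) \leq \max_i \J(\psi_i)$. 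The analogous bound holds for $\J_{\varphi'}(\psi)$, and assembling the pieces yields the claimed estimate.

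The main obstacle is $(\star)$: the pointwise inequality of mixed currents needs to interact correctly with the definition of $\|u\|^2_{(\cdot)}$ as $-\int u\, d'd''u\,\wedge\cdots$, in which $u$ is sign-changing. Morally this should follow by rewriting the norm as the pairing of the positive form $d'u \wedge d''u$ with the mixed positive current, then approximating by smooth psh functions as in the proof of Proposition~\ref{prop-int-by-parts}; working this out in the Chambert-Loir--Ducros formalism is where care is needed. Should $(\star)$ prove too delicate directly, a fall-back is to iterate the Cauchy--Schwarz inequality of Corollary~\ref{Cor-CSInequality} in the $\psi_i$-slots, swapping out the mixed entries one at a time at the cost of a slightly more elaborate combinatorial bookkeeping.
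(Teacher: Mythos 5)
Your proposal is correct and follows essentially the same route as the paper: average the $\psi_i$, reduce to the diagonal case of Lemma~\ref{lem-norm-holder} via the multinomial positivity inequality, and then pass from $\J_\varphi(\psi),\J_{\varphi'}(\psi)$ to the asserted maximum using the quasi-triangle inequality and Proposition~\ref{prop-IJ-formula}(3). The step $(\star)$ you flag as delicate is handled exactly as you suggest (rewrite $\|u\|_T^2$ as $\int d'u\wedge d''u\wedge T\wedge d'd''\varphi_\xi^+$, which is monotone in the positive current $T$), and is in fact taken for granted in the paper's own one-line proof.
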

\begin{proof}
    Put $\psi\coloneqq \frac{1}{n-2}\sum_{i=1}^{n-2}\psi_i\in \cpsh(\xi)$. Then the previous lemma gives
    \[\|\varphi-\varphi'\|_{(\psi_1, \cdots, \psi_{n-2})}^2\lesssim \|\varphi-\varphi'\|^2_{(\psi^{n-2})}\lesssim \vol(\xi)\I(\varphi, \varphi')^{\beta_n}\max\{\J_\varphi(\psi), \J_{\varphi'}(\psi)\}^{1-\beta_n}.  \]
    Now we conclude by the quasi-triangle inequality and Proposition~\ref{prop-IJ-formula} (3).
\end{proof}

\begin{prop}[{\cite[Lemma 7.30]{BJ18}}]\label{prop-holder-cpsh}
    For all $\varphi, \varphi', \psi, \psi'\in \cpsh(\xi)$, we have
    \[\left|\int (\varphi-\varphi')(\ma(\psi)-\ma(\psi'))\right|\lesssim \I(\varphi, \varphi')^{\alpha_n}\I(\psi, \psi')^{\frac 12}\max\{\J(\varphi), \J(\varphi'), \J(\psi), \J(\psi')\}^{\frac 12-\alpha_n},\]
    for $\alpha_n = \frac{1}{2^{n-1}}$.
\end{prop}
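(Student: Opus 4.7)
The plan is a standard telescoping argument that reduces everything to the interpolation bound of Corollary~\ref{cor-norm-holder}. First, expand
\[
\vol(\xi)(\ma(\psi)-\ma(\psi')) = \sum_{k=0}^{n-2} d'd''(\psi-\psi')\wedge (d'd''\psi)^k\wedge (d'd''\psi')^{n-2-k}\wedge d'd''\varphi_\xi^+,
\]
pair each summand with $\varphi-\varphi'$, and apply integration by parts (Proposition~\ref{prop-int-by-parts}) together with Cauchy--Schwarz (Corollary~\ref{Cor-CSInequality}) to bound the $k$-th term by $\|\varphi-\varphi'\|_{(\psi^k,\psi'^{n-2-k})}\,\|\psi-\psi'\|_{(\psi^k,\psi'^{n-2-k})}$.

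Next I would estimate the two seminorms asymmetrically. For the $\psi-\psi'$ factor, Proposition~\ref{prop-IJ-formula}(1) expresses $\vol(\xi)\I(\psi,\psi')$ as the sum of $\|\psi-\psi'\|^2_{(\psi^j,\psi'^{n-2-j})}$ over $j=0,\ldots,n-2$, so each individual such seminorm is at most $\sqrt{\vol(\xi)\I(\psi,\psi')}$; this produces the full $\I(\psi,\psi')^{1/2}$ factor. For the $\varphi-\varphi'$ factor, Corollary~\ref{cor-norm-holder} applied with $k$ copies of $\psi$ and $n-2-k$ copies of $\psi'$ as the auxiliary functions yields
\[
\|\varphi-\varphi'\|^2_{(\psi^k,\psi'^{n-2-k})}\lesssim \vol(\xi)\,\I(\varphi,\varphi')^{\beta_n}\,M^{1-\beta_n},
\]
where $M = \max\{\J(\varphi),\J(\varphi'),\J(\psi),\J(\psi')\}$ and $\beta_n = 2^{-(n-2)}$.

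Combining these two estimates, summing the $n-1$ contributions (the implicit constant absorbing the number of summands), and dividing by $\vol(\xi)$ from the definition of $\ma$ gives
\[
\left|\int(\varphi-\varphi')(\ma(\psi)-\ma(\psi'))\right|\lesssim \I(\varphi,\varphi')^{\beta_n/2}\,\I(\psi,\psi')^{1/2}\,M^{(1-\beta_n)/2}.
\]
Since $\beta_n/2 = \alpha_n$ and $(1-\beta_n)/2 = 1/2-\alpha_n$, this is precisely the claimed inequality. All of the real analytic work sits in Corollary~\ref{cor-norm-holder}, so the present proposition is essentially bookkeeping; the only subtlety is choosing the asymmetric split (full $\I^{1/2}$ on the $\psi-\psi'$ side, interpolated bound on the $\varphi-\varphi'$ side) so that the exponents on $\I(\varphi,\varphi')$ and on $M$ both halve correctly.
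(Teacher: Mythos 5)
Your argument is correct and is essentially the same as the paper's: telescope $\ma(\psi)-\ma(\psi')$, apply Cauchy--Schwarz termwise, bound the $\psi-\psi'$ seminorm by $\vol(\xi)^{1/2}\I(\psi,\psi')^{1/2}$ via Proposition~\ref{prop-IJ-formula}(1), and bound the $\varphi-\varphi'$ seminorm by Corollary~\ref{cor-norm-holder}, with the exponent bookkeeping $\beta_n/2=\alpha_n$ matching the paper exactly. The only cosmetic difference is that the paper takes a $\max_j$ over the summands where you sum and absorb the count into the implicit constant.
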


\begin{proof}
    Direct computation as in the proof of Proposition~\ref{chap4-prop-ma-properties} (2) and Proposition~\ref{prop-IJ-formula} shows
    \begin{align*}
        &\vol(\xi)\int (\varphi-\varphi')(\ma(\psi)-\ma(\psi'))\\
        &= \sum_{j=0}^{n-2} \int (\varphi-\varphi') d'd''(\psi-\psi')\wedge (d'd''\psi)^j\wedge (d'd''\psi')^{n-2-j}\wedge d'd''\varphi_\xi^+.
    \end{align*}
    By the Cauchy--Schwarz Inequality (Corollary~\ref{Cor-CSInequality}), we infer
    \begin{align*}
        \left|\int (\varphi-\varphi')(\ma(\psi)-\ma(\psi'))\right|
        \leq \vol(\xi)^{-1}\max_j \left\{\|\varphi-\varphi'\|_{(\psi^j, \psi'^{n-2-j})}\|\psi-\psi'\|_{(\psi^j, \psi'^{n-2-j})}\right\}.
    \end{align*}
    Now using Proposition~\ref{prop-IJ-formula} and Corollary~\ref{cor-norm-holder}, we have
    \begin{align*}
        &\max_j \left\{\|\varphi-\varphi'\|_{(\psi^j, \psi'^{n-2-j})}\|\psi-\psi'\|_{(\psi^j, \psi'^{n-2-j})}\right\}
        \lesssim \vol(\xi)^{\frac 12}\max_j \left\{\|\varphi-\varphi'\|_{(\psi^j, \psi'^{n-2-j})}\right\}\J(\psi, \psi')^{\frac 12}\\
        &\lesssim \vol(\xi) \I(\varphi, \varphi')^{\alpha_n} \J(\psi, \psi')^{\frac 12} \max\{\J(\varphi), \J(\varphi'), \J(\psi), \J(\psi')\}^{\frac 12-\alpha_n}.
    \end{align*}

\end{proof}

\section{General plurisubharmonic functions}\label{section: generalPSH}
In this section, we extend our class of psh functions even further to allow only upper-semicontinuous (usc) functions. We also extend the Monge-Amp\`ere operator to this class. The main goal is to show that this extended Monge-Amp\`ere energy still has nice continuity properties. 

\begin{defn}
We call $\varphi: X^{\an}\to \mathbb{R}\cup \{-\infty\}$ a \emph{$\xi$-plurisubharmonic ($\xi$-psh) function} on $(X^{\an}; \xi)$ if $\varphi$ is a pointwise limit of a decreasing net $(\varphi_i)$ in $\cpsh(\xi)$, and $\varphi \not\equiv -\infty$. We denote by $\psh(\xi)$ the set of $\xi$-psh functions on $X^{\an}$.
\end{defn}

\begin{prop}\label{prop-psh-fs}
Fix $\xi$ on $X$.
\begin{enumerate}
    \item $\psh(a\xi) = \frac 1a \psh(\xi)$ for all $t\in \mathbb{R_+}$.
    \item $\psh(\xi)$ is convex and $C^0(X^{\an}\setminus\{0\},\xi)\bigcap \psh(\xi) =\cpsh(\xi)$.
    \item For any decreasing net $(\varphi_i)$ in $\psh(\xi)$ with pointwise limit $\varphi$, either $\varphi\in\psh(\xi)$ or $\varphi \equiv -\infty$.
    \item Every function in $\psh(\xi)$ is the pointwise limit of a decreasing net in $\cH(\xi)$.
\end{enumerate}
\end{prop}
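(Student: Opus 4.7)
The plan is to dispatch (1), (2), and (4) quickly from Proposition~\ref{chap4-prop-cpsh-vector-space} together with standard net constructions, and to treat (3) via a net-reorganization argument that is the main technical step. For (1), the map $\varphi\mapsto a\varphi$ is by Proposition~\ref{chap4-prop-cpsh-vector-space}(4) a bijection $\cpsh(\xi)\to\cpsh(\tfrac{1}{a}\xi)$ which preserves pointwise limits and monotonicity, so it descends to a bijection $\psh(\xi)\to\psh(\tfrac{1}{a}\xi)$. For (2), if $\varphi_i\searrow\varphi$ and $\psi_j\searrow\psi$ are the defining decreasing nets in $\cpsh(\xi)$, then the product-indexed net $\theta_1\varphi_i+\theta_2\psi_j$ lies in $\cpsh(\xi)$ by Proposition~\ref{chap4-prop-cpsh-vector-space}(5), is decreasing, and has pointwise limit $\theta_1\varphi+\theta_2\psi$, giving convexity. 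The inclusion $\cpsh(\xi)\subseteq C^0(X^{\an}\setminus\{o\},\xi)\cap\psh(\xi)$ is immediate from the constant net; conversely, if $\varphi\in\psh(\xi)$ is continuous and $(\varphi_i)\subset\cpsh(\xi)$ decreases pointwise to $\varphi$, then via $\xi$-equivariance these restrict to continuous functions on the compact link $X_0$, so Dini's theorem promotes the convergence to uniform on $X_0$ and hence locally uniform on $X^{\an}\setminus\{o\}$, placing $\varphi$ in $\cpsh(\xi)$.

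For (3), I would write each $\varphi_i=\inf_{\alpha\in A_i}\psi_{i,\alpha}$ for a decreasing net $(\psi_{i,\alpha})_\alpha\subset\cpsh(\xi)$, so that $\varphi=\inf_{(i,\alpha)}\psi_{i,\alpha}$, and then reorganize this doubly-indexed infimum into a single decreasing net in $\cpsh(\xi)$. First I would pass to the max-closure of the total family (still inside $\cpsh(\xi)$ by Proposition~\ref{chap4-prop-cpsh-vector-space}(3)), and then take as index set the collection $D$ of pairs $(i,\alpha)$ with the order $(i,\alpha)\preceq(i',\alpha')$ iff $\psi_{i,\alpha}\geq\psi_{i',\alpha'}$ pointwise, so that the assignment $\chi_{(i,\alpha)}\coloneqq\psi_{i,\alpha}$ is tautologically decreasing. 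Upward directedness of $D$ should follow by combining the directedness of the outer index set $I$ (choosing $i_3\geq i_1,i_2$, so that $\varphi_{i_3}\leq\psi_{i_1,\alpha_1},\psi_{i_2,\alpha_2}$) with the fact that the inner net $(\psi_{i_3,\gamma})_\gamma$ decreases to $\varphi_{i_3}$ and may be enlarged by finite maxima inside $\cpsh(\xi)$. The hypothesis $\varphi\not\equiv-\infty$ then guarantees that the pointwise limit of the reorganized net is a genuine $\xi$-psh function. Part (4) runs along parallel lines: starting from decreasing $(\varphi_i)\subset\cpsh(\xi)$ with limit $\varphi$, approximate each $\varphi_i$ uniformly on $X_0$ by $\chi_{i,j}\in\cH(\xi)$ (using that locally uniform convergence of $\xi$-equivariant functions on $X^{\an}\setminus\{o\}$ is uniform convergence on the compact $X_0$), and add small constants $\varepsilon_{i,j}\searrow 0$ with $\chi_{i,j}+\varepsilon_{i,j}\geq\varphi_i\geq\varphi$; these remain in $\cH(\xi)$ by Lemma~\ref{chap4-lem-fs-properties}(1), and the closure of $\cH(\xi)$ under maxima given by Lemma~\ref{chap4-lem-fs-properties}(2) permits the same reorganization as in (3) to produce a decreasing net in $\cH(\xi)$ with pointwise limit $\varphi$.

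The main obstacle I anticipate is verifying upward directedness of $D$ in (3). The subtlety is that $\cpsh(\xi)$ is closed under finite maxima but not minima, so given $(i_1,\alpha_1)$ and $(i_2,\alpha_2)$ one cannot simply produce a common upper bound in $\preceq$ by taking a minimum of $\psi_{i_1,\alpha_1}$ and $\psi_{i_2,\alpha_2}$; instead one must descend to a common $i_3\geq i_1,i_2$ in $I$ and extract a sufficiently deep index $\gamma$ inside the inner decreasing net at $i_3$. This diagonal maneuver requires care because for merely usc (non-continuous) $\varphi_{i_3}$ the infimum $\inf_\gamma\psi_{i_3,\gamma}=\varphi_{i_3}$ need not be attained uniformly, so the max-closure step is essential to ensure that a single cofinal element of $D$ lies pointwise below both $\psi_{i_1,\alpha_1}$ and $\psi_{i_2,\alpha_2}$.
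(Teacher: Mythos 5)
Your treatment of (1) and (2) is fine and matches the paper's: (1) and convexity are transported from Proposition~\ref{chap4-prop-cpsh-vector-space}, and the identification $C^0(X^{\an}\setminus\{o\},\xi)\cap\psh(\xi)=\cpsh(\xi)$ is Dini's lemma on the compact link, exactly as in the paper. For (3) and (4), the paper simply invokes \cite[Lemma 4.6]{BJ18} with $K=X_0$, $\widetilde{\mathcal{F}}=\cpsh(\xi)$, $\mathcal{F}=\cH(\xi)$, whereas you attempt to reprove that lemma directly; that is a legitimate route, but your argument has a genuine gap at precisely the step you flag as the main obstacle.

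The gap: upward directedness of your index set $D$ does not follow from max-closure. In the order $(i,\alpha)\preceq(i',\alpha')$ iff $\psi_{i,\alpha}\geq\psi_{i',\alpha'}$, a common upper bound of $(i_1,\alpha_1)$ and $(i_2,\alpha_2)$ must be an element of the family lying pointwise \emph{below} both $\psi_{i_1,\alpha_1}$ and $\psi_{i_2,\alpha_2}$. Closing the family under finite maxima only adds \emph{larger} functions, so it can never supply such an element; and the inner net $(\psi_{i_3,\gamma})_\gamma$ at a common refinement $i_3$ need not contain one either: from $\inf_\gamma\psi_{i_3,\gamma}=\varphi_{i_3}\leq\min(\psi_{i_1,\alpha_1},\psi_{i_2,\alpha_2})$, Dini on the compact $X_0$ yields for each $\varepsilon>0$ some $\gamma$ with $\psi_{i_3,\gamma}<\min(\psi_{i_1,\alpha_1},\psi_{i_2,\alpha_2})+\varepsilon$, but in general no $\gamma$ works with $\varepsilon=0$ (e.g.\ an inner net strictly decreasing to its infimum). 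The standard repair --- and the actual content of \cite[Lemma 4.6]{BJ18} --- is to exploit stability of $\cpsh(\xi)$ (resp.\ $\cH(\xi)$) under addition of constants rather than under max: index the net by pairs $(\psi,\varepsilon)$ with $\psi$ in the family, $\psi\geq\varphi$ and $\varepsilon>0$, ordered by $(\psi,\varepsilon)\leq(\psi',\varepsilon')$ iff $\psi'+\varepsilon'\leq\psi+\varepsilon$, and take the net $(\psi,\varepsilon)\mapsto\psi+\varepsilon$. Directedness then follows from the $\varepsilon$-approximate version of your minimum argument, and the limit is still $\varphi$ since $\varepsilon$ may be sent to $0$. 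The same correction is needed where you invoke the "same reorganization" in your proof of (4).
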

\begin{proof}
(1) and first assertion of (2) follow from analogous result on continous psh functions. It is clear that $\cpsh(\xi)\subseteq C^0(X^{\an}\setminus \{o\}, \xi)\cap \psh(\xi)$. Let $\varphi\in \psh(\xi)$ be a function that is also continuous on $X^{\an}\setminus \{o\}$. Then by definition, there is a decreasing sequence $\varphi_i\in \cpsh(\xi)$ converging pointwise to $\varphi$. Then on each compact subset $K\subset X^{\an}\setminus\{o\}$, the convergence is uniform by Dini's lemma. Hence $C^0(X^{\an}\setminus \{o\}, \xi)\cap \psh(\xi)=\cpsh(\xi)$.
(3) and (4) are consequences of~\cite[Lemma 4.6]{BJ18} with $K = X_0, \widetilde{\mathcal{F}} = \cpsh(\xi), \mathcal{F} = \cH(\xi)$.
\end{proof}

\begin{cor}\label{chap4-cor-qm-linear-growth}
    For any $\varphi\in \psh(\xi)$ we have $\varphi>-\infty$ on the set of quasi-monomial valuations in $X_0$.
\end{cor}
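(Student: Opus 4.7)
The plan is to invoke Proposition~\ref{prop-psh-fs}(4) to reduce to approximating $\varphi$ by a decreasing net of Fubini--Study functions, and then exploit the explicit max formula defining an FS function to bound its value at a quasi-monomial valuation in terms of its supremum and $T(v;\xi)$.

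First, I would write a chosen FS approximant as
\[
\varphi_i = \max_{j}\frac{\log|f_{ij}|+\lambda_{ij}}{\langle \xi, \alpha_{ij}\rangle},\quad f_{ij}\in R_{\alpha_{ij}},
\]
where the $f_{ij}$ generate an $\fm$-primary ideal. Any $\fm$-primary ideal is contained in $\fm$, so each $f_{ij}\in\fm$, and hence $v(f_{ij})\geq 0$ for every $v\in X_0$. Evaluating $\varphi_i$ at the trivial valuation $v_{\triv}\in X_0$ (where $v_{\triv}(f_{ij})=0$) and comparing with the general bound $\varphi_i(v)\leq\max_j\lambda_{ij}/\langle\xi,\alpha_{ij}\rangle$ on $X_0$ identifies
\[
M_i:=\sup_{X_0}\varphi_i=\max_j\lambda_{ij}/\langle\xi,\alpha_{ij}\rangle.
\]
Picking the index $j^{\ast}=j^{\ast}(i)$ realizing $M_i$ and using Definition-Proposition~\ref{chap3-prop-T-formula}, for any linearly bounded quasi-monomial $v\in X_0$ one gets
\[
\varphi_i(v)\geq\frac{\lambda_{ij^{\ast}}-v(f_{ij^{\ast}})}{\langle\xi,\alpha_{ij^{\ast}}\rangle}\geq M_i-T(v;\xi).
\]

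To conclude, I would combine this uniform estimate with the standard fact that for a decreasing net of continuous functions on the compact space $X_0$ with usc pointwise limit, the suprema converge to the supremum of the limit: extracting a convergent subnet of maximizers $v_i\in X_0$ for $\varphi_i$ and using $\varphi_i(v_i)\leq \varphi_j(v_i)$ for $i\geq j$ yields $\sup_{X_0}\varphi=\lim_i M_i$. Since $\varphi\not\equiv-\infty$ on $X^{\an}$ and $\varphi$ is $\xi$-equivariant (so values on each $\xi$-orbit differ from those on $X_0$ by an additive constant), we have $\sup_{X_0}\varphi>-\infty$, hence $\lim_i M_i>-\infty$. Letting $i\to\infty$ in the displayed inequality gives
\[
\varphi(v)\geq\sup_{X_0}\varphi-T(v;\xi)>-\infty.
\]

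The main obstacle, though really a technicality, is the tacit requirement that every quasi-monomial valuation in $X_0$ is linearly bounded, i.e.\ that $T(v;\xi)<\infty$. This is not verified directly in the excerpt but follows from an Izumi-type bound on the normal affine $\TT$-variety $X$: one compares $v(f_\alpha)$ to $\mathrm{ord}_\fm(f_\alpha)$ and then uses the compatibility between the weight grading and the $\fm$-adic filtration to estimate $\mathrm{ord}_\fm(f_\alpha)\lesssim\langle\xi,\alpha\rangle$ uniformly in $\alpha\in\Lambda\setminus\{0\}$. Once this is granted, the rest of the argument is a direct manipulation of the Fubini--Study max formula together with the standard semicontinuity argument on the compact link.
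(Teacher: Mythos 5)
Your main estimate is exactly the one the paper uses: for a Fubini--Study function the max formula gives $\varphi_i(v)\geq \varphi_i(v_{\triv})-T(v;\xi)$ via Definition-Proposition~\ref{chap3-prop-T-formula}, and the inequality survives taking finite maxima and decreasing limits. Your identification of $\sup_{X_0}\varphi_i$ with the value at $v_{\triv}$ and the Dini-type argument for $\lim_i M_i=\sup_{X_0}\varphi$ are correct, and in fact more than is needed: $M_i\geq \sup_{X_0}\varphi>-\infty$ already suffices to pass to the limit. Your justification that $\sup_{X_0}\varphi>-\infty$ (via $\xi$-equivariance) is a point the paper's proof leaves implicit, so this part is fine and matches the intended argument.

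The substance of the corollary, however, is the finiteness of $T(v;\xi)$ for quasi-monomial $v\in X_0$, and this is exactly the step you leave unverified. The paper does not argue via an Izumi bound on $X$: it observes that finiteness of $T(v;\xi)$ is independent of $\xi$, passes to a rational polarization, compactifies $X$ to the corresponding projective (orbifold) cone $Y$, and invokes the known linear-growth property of quasi-monomial valuations on a polarized projective variety. Your Izumi sketch has a genuine wrinkle beyond being a sketch: points of $X_0$ satisfy $v(\fm)=0$, so they are \emph{not} centered at the cone point $o$, and the comparison $v(f)\leq C\,\ord_{\fm}(f)$ is not a direct application of Izumi's theorem at $o$. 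One would first need to check that $o$ lies in the closure of the center of $v$ (true for $\TT$-invariant $v$, since $\{v>0\}$ is then a homogeneous proper prime and hence contained in $\fm$, but requiring a separate argument for general quasi-monomial points of $X_0$) before a refined Izumi-type estimate can be applied; only then does the elementary bound $\ord_{\fm}(f_\alpha)\lesssim \langle\xi,\alpha\rangle$ finish the job. This route can likely be completed, but as written it is the one genuinely incomplete step, and the compactification argument is the cleaner way to close it.
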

\begin{proof}
    First if $\varphi = \frac{\log|f_\alpha|+\lambda_\alpha}{\la \xi, \alpha\ra}$, then by Proposition~\ref{chap3-prop-T-formula} we have
    \[\varphi(v)\geq \varphi(v_{\triv})-T(v;\xi).\]
    We claim that the same inequality holds for $\varphi\in \psh(\xi)$.
    If $\varphi\in \fs(\xi)$, then the above inequality still holds, since taking finite maxima does not affect it. In general, it follows from Definition-Proposition~\ref{prop-psh-fs}. Thus it remains to show that $T(v;\xi)<\infty$. To this end, note that finiteness does not depend on $\xi$, we can choose a compactification $Y\supset X$ where $Y$ is a normal projective variety with grading induced by some rational $\xi$ (e.g. one can take $Y$ to be the corresponding orbifold cone for some rational $\xi$). Then by~\cite[Proposition 2.12]{vallinear}, $v$ has linear growth, that is, $T(v;\xi)<\infty$.
\end{proof}

We in fact have an alternative characterization of $T(v)$ using general psh functions.
\begin{cor}\label{chap4-cor-T-psh-duality}
    For any $v\in X_0$, we have 
    \[T(v, \xi) = \sup_{\varphi\in \psh(\xi)} \left(\sup_{X_0}\varphi - \varphi(v).\right)\]
\end{cor}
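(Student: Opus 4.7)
The plan is to prove the two inequalities separately, starting from the key observation that on $X_0$ the supremum of any $\xi$-psh function is attained at the trivial valuation $v_\triv$. To establish this, I would note that for any single-term function $\psi = (\log|f_\alpha|+\lambda)/\la\xi,\alpha\ra$ with $\alpha\in\Lambda\setminus\{0\}$ and any $v\in X_0$, one has $v(f_\alpha)\ge 0 = v_\triv(f_\alpha)$ because $f_\alpha\in\fm$, so $\psi(v)\le \psi(v_\triv)$. Taking finite maxima handles the case $\psi\in\cH(\xi)$, and passing to decreasing nets via Proposition~\ref{prop-psh-fs}(4) upgrades this to all $\varphi\in\psh(\xi)$, giving $\sup_{X_0}\varphi = \varphi(v_\triv)$.

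The inequality ``$\le$'' is then immediate: the proof of Corollary~\ref{chap4-cor-qm-linear-growth} yields $\varphi(v)\ge \varphi(v_\triv)-T(v;\xi)$ for every $\varphi\in\psh(\xi)$, and combining this with the observation above gives $\sup_{X_0}\varphi-\varphi(v)\le T(v;\xi)$; this works uniformly regardless of whether $T(v;\xi)$ is finite.

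For the reverse inequality, I would exhibit, for each $\alpha\in\Lambda\setminus\{0\}$ and each nonzero $f_\alpha\in R_\alpha$, the function $\varphi_{f_\alpha}\coloneqq \log|f_\alpha|/\la\xi,\alpha\ra$ as a member of $\psh(\xi)$ by realizing it as the pointwise decreasing limit, as $M\to\infty$, of
\[
\max\left\{\frac{\log|f_\alpha|}{\la\xi,\alpha\ra},\;\varphi_\xi-M\right\}.
\]
These are genuine FS functions because $\{f_\alpha\}$ together with a choice of homogeneous generators of $\fm$ cuts out $\{o\}$ set-theoretically, and the pointwise limit is not identically $-\infty$ since it vanishes at $v_\triv$. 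Then $\sup_{X_0}\varphi_{f_\alpha}-\varphi_{f_\alpha}(v)= v(f_\alpha)/\la\xi,\alpha\ra$, and taking the supremum over all such $f_\alpha$ recovers $T(v;\xi)$ by Definition-Proposition~\ref{chap3-prop-T-formula}. The one point requiring care is this approximation scheme: the role of the $\varphi_\xi-M$ term is precisely to supply the missing ``$\fm$-primary'' companion generators so that the maximum qualifies as a Fubini--Study function in $\cH(\xi)$, and, once $M$ is large, to be dominated by the $\log|f_\alpha|$ term except near $\{f_\alpha=0\}$, producing the genuinely decreasing net.
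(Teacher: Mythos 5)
Your proposal is correct and follows essentially the same route as the paper: both directions rest on the inequality $\varphi(v)\ge\varphi(v_{\triv})-T(v;\xi)$ from Corollary~\ref{chap4-cor-qm-linear-growth} together with testing against the functions $\log|f_\alpha|/\la\xi,\alpha\ra$ viewed as elements of $\psh(\xi)$. You merely make explicit two points the paper leaves implicit --- that $\sup_{X_0}\varphi=\varphi(v_{\triv})$ for every $\xi$-psh function, and that $\log|f_\alpha|/\la\xi,\alpha\ra$ lies in $\psh(\xi)$ via the decreasing approximation $\max\{\log|f_\alpha|/\la\xi,\alpha\ra,\varphi_\xi-M\}$ --- both of which are handled correctly.
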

\begin{proof}
    Denote by $T$ the right hand side of the equality in the corollary. 
    Let $\varphi = \frac{\log|f_\alpha|}{\la\xi,\alpha\ra}\in \psh(\xi)$. Then
    $\varphi(v)\geq \varphi(v_{\triv})-T(v)$
    implies $T(v, \xi)\leq T$ by Definition-Proposition~\ref{chap3-prop-T-formula}.
    On the other hand, after adding constants and taking max, the same inequality holds for $\varphi\in \fs(\xi)$, and hence also for $\varphi\in \psh(\xi)$ by Proposition~\ref{prop-psh-fs}. This proves $T(v; \xi)\geq T$, and we are done. 
\end{proof}

We equip $\psh(\xi)$ with the topology of pointwise convergence on quasi-monomial valuations. The following is an analog of~\cite[Lemma 4.26]{BJ18}.
\begin{lem}\label{lem-fs-finite-qm}
Let $\psi\in \cH(\xi)$.
Then there is a finite set $S\subset X^{\an}$ of quasi-monomial valuations such that for any $\varphi\in \cH(\xi)$,
\[\sup_{X^{\an}\setminus\{o\}}  (\varphi-\psi) = \sup_S (\varphi-\psi).\]
\end{lem}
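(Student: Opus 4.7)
The plan is to mimic the proof of the analogous global fact~\cite[Lemma 4.26]{BJ18} by building, from $\psi$ alone, a finite-dimensional polyhedral complex inside $X^{\an}$ whose vertices will serve as $S$. Write $\psi=\max_i \frac{\log|g_i|+\mu_i}{\la\xi,\beta_i\ra}$ with $g_i\in R_{\beta_i}$ generating an $\fm$-primary ideal $\fa_\psi$. Using characteristic zero, I take a projective log-resolution $\pi\colon Y\to X$ of the pair $(X,\fa_\psi\cdot\fm)$, so that $\fa_\psi\cdot\cO_Y=\cO_Y(-D)$ and $\fm\cdot\cO_Y=\cO_Y(-F)$ with $D+F$ supported on an SNC divisor $E$ on $Y$. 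Let $\Delta=\Delta(Y)\subset X^{\an}$ be the dual complex of the components of $E$ meeting $\pi^{-1}(o)$, a compact simplicial complex whose points are quasi-monomial valuations, with a canonical retraction $r\colon X^{\an}\setminus\{o\}\to \Delta$.

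On each simplex of $\Delta$ parametrized by weights $w\in\RR_{\geq 0}^k$, the function $v_w\mapsto -v_w(f)$ is convex and piecewise-linear for any $f\in R$, being the max of linear forms taken over the monomial expansion; hence $\psi$ restricts to a convex PL function on $\Delta$. The key consequence of the log-resolution is that $\psi$ is moreover \emph{affine} on each cell of a finite polyhedral subdivision $\Sigma_\psi$ of $\Delta$: locally on $Y$, $g_i=z^D\cdot h_i$ with the $h_i$'s generating the unit ideal, so each $\log|g_i|(v_w)$ becomes linear on suitable subcells, and $\Sigma_\psi$ is then cut out by the loci where a given index $i$ realizes the max in $\psi$. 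I define $S$ to be the vertex set of $\Sigma_\psi$; this is a finite set of quasi-monomial valuations depending only on $\psi$. On each cell $C$ of $\Sigma_\psi$, $\varphi$ is convex PL and $\psi$ is affine, so $\varphi-\psi$ is convex on $C$ and attains its maximum at a vertex; iterating over cells gives $\sup_\Delta(\varphi-\psi)=\sup_S(\varphi-\psi)$.

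To finish, I need $\sup_{X^{\an}\setminus\{o\}}(\varphi-\psi)=\sup_\Delta(\varphi-\psi)$. The natural strategy is to show $(\varphi-\psi)(v)\leq(\varphi-\psi)(r(v))$: since $v(f)\geq r(v)(f)$ for all $f\in R$, one gets $\varphi(v)\leq \varphi(r(v))$ immediately, and the missing identity is $\psi(v)=\psi(r(v))$. I expect this to be the main obstacle. Unlike the global setting of~\cite{BJ18}, where $\psi$ is a standard model function associated to a single ideal and the equality is automatic, the weighted denominators $\la\xi,\beta_i\ra$ here vary with $i$, so the argmax in $\psi(r(v))$ need not be achieved at an index $i^{*}$ for which $h_{i^{*}}$ is a unit at the center of $v$; in general one only obtains the inequality $\psi(v)\leq \psi(r(v))$. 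Resolving this will likely require either choosing the log-resolution to handle a carefully weighted combination of the $g_i$'s (a subtle construction when $\xi$ is irrational and the weights are not rationally commensurable), or enlarging $S$ by additional quasi-monomial vertices dictated by the interaction between the retraction and the weighted max, so that the supremum is still forced onto $S$. Once this point is settled, continuity in $\xi$ ensures that the irrational case requires no further argument, since the combinatorial data defining $S$ depends only on the fixed generators $g_i$ and shifts $\mu_i$.
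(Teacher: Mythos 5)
There is a genuine gap, and you have flagged it yourself: the reduction $\sup_{X^{\an}\setminus\{o\}}(\varphi-\psi)=\sup_\Delta(\varphi-\psi)$ is left unproven, and without it the lemma is not established. The paper avoids the retraction comparison $\psi(v)=\psi(r(v))$ altogether by two moves you are missing. First, it enlarges the defining data of $\psi$ (adding generators $f_j$ with very negative $\lambda_j$, which does not change $\psi$) so that $(f_1,\dots,f_N)$ generate $\fm$, and takes a log resolution on which \emph{each individual} $f_i$ pulls back to a unit times a monomial $u_i\prod_j z_j^{a_{ij}}$ --- not merely a principalization of the product ideal, which is all your resolution of $\fa_\psi\cdot\fm$ gives and which is exactly why your $h_i$'s fail to be units and your weighted max misbehaves. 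With each $f_i$ monomial, $v(f_i)$ depends only on the tuple $(v(z_j))_j$ for any $v$ centered in the chart, so $\pi^*\psi$ literally factors through the tropicalization; no retraction identity is needed. Second --- and this is the key idea absent from your proposal --- the paper reduces $\varphi$ to a single term $\frac{\log|g|}{\la\xi,\beta\ra}+\mu$ and then expands $\pi^*g$ as a sum of monomials in the $f_i$'s; after covering by smaller opens where one monomial dominates, $\pi^*\varphi$ becomes a convex combination $\sum_i c_i\ell_i+\mu$ of the \emph{same} linear forms $\ell_i$ that define $\pi^*\psi$ (here one uses $\beta=\sum b_i\alpha_i$ to see $\sum c_i=1$). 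This is what makes the finite set $S$ depend only on $\psi$: on the locus where $\ell_1$ realizes the max in $\pi^*\psi$, the difference is $\sum_{i\ge2}c_i(\ell_i-\ell_1)+\text{const}$ with $c_i\ge0$, $\sum c_i\le1$, a family of functions controlled purely by the combinatorics of $\psi$.

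A second, smaller issue: your claim that $\varphi-\psi$ ``attains its maximum at a vertex'' of each cell presupposes compact cells, but the relevant polyhedra (the loci $\{x_j\le -1,\ \ell_1+\lambda_1/\la\xi,\alpha_1\ra\ge\ell_i+\lambda_i/\la\xi,\alpha_i\ra\}$ in $\RR^r$) are unbounded. The paper devotes a separate combinatorial lemma to this, showing the supremum over infinite rays is controlled by finitely many points using the $\boldsymbol{v}$-equivariance $\ell_i(\bx-t\boldsymbol{v})=\ell_i(\bx)-t$ coming from the $\xi$-action. Your compact dual complex $\Delta$ does not make this disappear: the standard retraction over a trivially valued field lands in the \emph{cone} over the dual complex, so the non-compactness resurfaces there.
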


\begin{proof}
Assume $\psi = \max\limits_{i=1, \cdots, N} \{\frac{\log|f_i|+\lambda_i}{\langle \xi, \alpha_i\rangle}\}$, and by possibly adding more functions $f_j$ with sufficiently small $\lambda_j$, we may assume $(f_1, \cdots, f_N)$ generate the maximal ideal $\fm$ at $o$.
Let $Y$ be a  $\mathbb{T}$-equivariant log resolution of the ideal $\fm = (f_1, \cdots, f_N)$, and denote the map by $\pi: Y\to X$. Then $\pi^{-1}(0)$ is covered by finitely many open subsets $\{U_k\}_{k=1}^s$ such that on each $U_k$ one can choose local coordinates $z_1, \cdots, z_r$ with $r\leq n$ such that $\pi^*(f_i) = u_i\prod_{j=1}^r z_j^{a_{ij}}$, where the $u_i$ are units. Let $W: = \bigcup_{k=1}^s \mathrm{red}^{-1}(U_k)\subset Y^{\an}$. Now by $\xi$-invariance of $\varphi-\psi$, $\sup_{X^{\an}} (\varphi -\psi)$ is achieved on the subset of points centered at $o\in X$. Hence
\[\sup_{X^{\an}} (\varphi -\psi) = \sup_{W} \pi^*(\varphi - \psi).\] 
It remains to show there is some finite subset $S_k\subset \mathrm{red}^{-1}(U_k)$ only depending on $\psi$ such that 
\[\sup_{\mathrm{red}^{-1}(U_k)} \pi^*(\varphi - \psi) = \sup_{S_k} \pi^*(\varphi - \psi).\]
To this end, to simplify the notation, we assume that we are on some $U = U_k$, and choose $z_1, \cdots, z_r$ as before. Let $x_j = \log|z_j|\in [-\infty, 0), 1\leq j\le r$, $\boldsymbol{x} = (x_1, \cdots, x_r)$. Write $\ell_i(\boldsymbol{x}) = \frac{1}{\langle \xi, \alpha_i\rangle}\sum_{j=1}^r a_{ij}x_j$. Then 
\[\pi^*\psi = \max_{1\leq i\leq N} \{\ell_i(\boldsymbol{x}) +\frac{\lambda_i}{\langle\xi, \alpha_i\rangle}\}\]
Without loss of generality, we can assume $\varphi$ takes the form $\varphi = \frac{\log|g|}{\langle\xi, \beta\rangle}+\mu$, where $\pi^*g$ is a monomial in $f_1, \cdots, f_N$. Indeed, say $\pi^*g = \sum_{i=1}^M w_i f_1^{b_{i1}}\cdots f_N^{b_{iN}}$. Then 
\[\log|\pi^*g| \leq \max_{1\leq i\leq M} \log|f_1^{b_{i1}}\cdots f_N^{b_{iN}}|. \]
Thus we can cover $\mathrm{red}^{-1}(U_k)$ by finitely many samller opens on which $\log|\pi^*g| = \log|f_1^{b_1}\cdots f_N^{b_N}|$ for some monomial appearing in the above. Hence we may assume $\pi^*g = f_1^{b_1}\cdots f_N^{b_N}$. Then
\[\pi^*\varphi = \frac{\sum_{i=1}^N b_i\log|f_i|}{\langle \xi, \beta\rangle}+\mu = \sum_{i=1}^N\frac{b_i \langle\xi, \alpha_i\rangle \ell_i(\boldsymbol{x})}{\sum_{j=1}^N b_j\langle\xi, \alpha_j\rangle}+\mu.\]
Let $c_i = \frac{b_i \langle\xi, \alpha_i\rangle }{\sum_{j=1}^N b_j\langle\xi, \alpha_j\rangle}, 1\leq i\leq N.$ Note that $c_i\geq 0, \forall i$ and $\sum_{i=1}^N c_i = 1.$
Now on the subset $$V_1:= \{\boldsymbol{x} = (x_1, \cdots, x_r)\in \mathbb{R}^r: \pi^*\psi = \ell_1(\boldsymbol{x})+ \frac{\lambda_1}{\langle \xi, \alpha_1\rangle}, x_i\leq -1, \  \forall i=1, \cdots, r\},$$ we can write 
\[\pi^*\varphi - \pi^*\psi = \sum_{i=1}^N c_i\ell_i(\boldsymbol{x}) - \ell_1(\boldsymbol{x}) +\mu-\frac{\lambda_1}{\langle \xi, \alpha_1\rangle} = \sum_{i=2}^N c_i (\ell_i(\boldsymbol{x}) - \ell_1(\boldsymbol{x})) +\mu-\frac{\lambda_1}{\langle \xi, \alpha_1\rangle}.\]
We have now reduced the problem to showing there is some finite subset $S_{k1}\subset V_1$ depending only on $\psi$ such that
\[\sup_{V_1}(\pi^*\varphi - \pi^*\psi) = \sup_{S_{k1}} (\pi^*\varphi - \pi^*\psi),\]
for any $\pi^*\varphi$ of the above form, since any point in $V_1$ corresponds to a quasi-monomial valuation by~\cite{JM12}.
This is done by the following lemma.
\end{proof}

\begin{lem}
Let $\ell_1(\boldsymbol{x}), \cdots, \ell_N(\boldsymbol{x})$ be linear functions on $\mathbb{R}^r$ with non-negative coefficients, and $\boldsymbol{v}$-equivariant in the sense that 
\[\ell_i(\boldsymbol{x}-t\boldsymbol{v}) = \ell_i(\boldsymbol{x}) -t,  1\leq i\leq N,\]
for some nonzero $\boldsymbol{v}\in \mathbb{R}^r_{\geq 0}$. Let $\lambda_1, \cdots, \lambda_N$ be real numbers. Set
\[\Omega = \{\boldsymbol{x} = (x_1, \cdots, x_r)\in \mathbb{R}^r: \ell_1(\boldsymbol{x})+\lambda_1\geq \ell_i(\boldsymbol{x})+\lambda_i, \  2\leq i \leq N, \ x_j\leq -1,  \  1\leq j\leq r\}\]
Assume $\ell_1(\boldsymbol{x})+\lambda_1>-\infty$ on $\Omega$.
Then there is a finite subset $S\subset \Omega$ such that for any function $f$ of the form
\[f(\boldsymbol{x}) = \sum_{i=2}^{N}c_i(\ell_i(\boldsymbol{x})-\ell_1(\boldsymbol{x}))\]
where $c_2, \cdots, c_N$ are nonnegative real numbers with $\sum_{i=2}^N c_i\leq 1$, one has
\[\sup_{\Omega} f = \sup_S f.\]
\end{lem}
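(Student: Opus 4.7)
The plan is to take $S$ to be the set of vertices (extreme points) of the closed convex polyhedron $\Omega \subset \mathbb{R}^r$, and to deduce the claim from basic polyhedral/linear-programming facts. The $\boldsymbol{v}$-equivariance and the ``$\ell_1+\lambda_1 > -\infty$'' hypothesis do not actually enter this argument: they are built into the setup of the previous lemma and ensure compatibility with the valuation-theoretic interpretation, but the combinatorial statement is purely about a linear objective on a polyhedron.

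First, I would observe that $\Omega$ is cut out by finitely many closed linear inequalities, hence it is a closed convex polyhedron. The box constraints $x_j \leq -1$ force any affine line contained in $\Omega$ to have direction vector $\boldsymbol{w}$ with $w_j = 0$ for every $j$, i.e.\ $\boldsymbol{w}=0$; so $\Omega$ is line-free. By Minkowski--Weyl, a line-free polyhedron has only finitely many vertices, and I take $S$ to be this vertex set (if $\Omega=\emptyset$ then $S=\emptyset$ and the claim is vacuous).

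Second, I would verify that each admissible $f$ is bounded above on $\Omega$ by a constant depending only on the data $\lambda_i$. The defining inequalities of $\Omega$ read $\ell_i(\boldsymbol{x})-\ell_1(\boldsymbol{x}) \leq \lambda_1-\lambda_i$ for $i\geq 2$ and $\boldsymbol{x}\in\Omega$, so
\[
f(\boldsymbol{x}) = \sum_{i=2}^N c_i\bigl(\ell_i(\boldsymbol{x})-\ell_1(\boldsymbol{x})\bigr) \leq \sum_{i=2}^N c_i(\lambda_1-\lambda_i) \leq \sum_{i=2}^N |\lambda_1-\lambda_i|,
\]
uniformly in $\boldsymbol{x}\in\Omega$ and in the admissible coefficients $c_i\geq 0$ with $\sum c_i \leq 1$.

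Finally, $f$ is linear, bounded above on a line-free polyhedron, so by a textbook LP result the supremum is attained, and it is attained at some vertex. Hence $\sup_\Omega f = \max_{\boldsymbol{x}\in S} f(\boldsymbol{x})$, and the finite set $S$, which depends only on $\Omega$, works uniformly in $f$. The only ``obstacle'' is really just selecting the correct classical fact (boundedness $+$ line-freeness $\Rightarrow$ attainment at a vertex); there is no analytic content. If in the ambient context one wanted to allow boundary points with some $x_j = -\infty$, one would use the hypothesis $\ell_1+\lambda_1 > -\infty$ on $\Omega$ to compactify and conclude by a routine limit argument, but the combinatorial core is unchanged.
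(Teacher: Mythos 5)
Your proof is correct, and it takes a genuinely different and more streamlined route than the paper. The paper proceeds by induction on the dimension of $\Omega$, reducing the supremum to vertices and unbounded one-dimensional rays of $\partial\Omega$, and then disposes of the rays by a second induction on $r$, splitting into the case where some coordinate of the ray direction vanishes (restrict to a coordinate hyperplane and recurse) and the case where none does (use the $\boldsymbol{v}$-equivariance to argue $f$ is constant along the ray). You instead observe that the box constraints $x_j\leq -1$ make $\Omega$ line-free, that the defining inequalities $\ell_i-\ell_1\leq \lambda_1-\lambda_i$ give a bound on $f$ over $\Omega$ that is uniform in the admissible coefficients $c_i$, and then invoke the Minkowski--Weyl decomposition $\Omega=\mathrm{conv}(V)+\mathrm{cone}(R)$: boundedness above forces $f\leq 0$ on every recession direction, so $\sup_\Omega f=\max_V f$ and $S=V$ works uniformly in $f$. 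This buys a shorter, cleaner argument with a canonical choice of $S$, and it correctly identifies that the $\boldsymbol{v}$-equivariance and the hypothesis $\ell_1+\lambda_1>-\infty$ are not needed for the combinatorial statement as posed over $\mathbb{R}^r$ (they matter only for the compactified picture in the parent lemma, as you note). The one point worth making explicit if you write this up is the standard LP fact you are using --- a linear functional bounded above on a nonempty pointed polyhedron attains its supremum at a vertex --- together with the one-line verification via the conic part of the decomposition; with that spelled out, the proof is complete.
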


\begin{proof}
By assumption, $\sup_\Omega f\leq 0$.
If $\Omega$ is empty, then we are done. Assume that $\Omega$ is non-empty. Note that $\Omega$ is a (possibly unbounded) convex polyhedron, so any point $\boldsymbol{x}\in\mathrm{int} \  \Omega$ is a convex combination of two points in $\partial \Omega$. Thus $\sup_\Omega f = \sup_{\partial \Omega} f$. An induction on the dimension of $\Omega$ shows that 
$\sup_\Omega f $ is achieved on vertices or points in infinite one dimensional rays in $\partial \Omega.$ We set
$$S=\{\mathrm{vertices \ in \ }\partial \Omega\}\bigcup \bigcup_{\boldsymbol{r}: \mathrm{\ infinite \ ray}}S_{\boldsymbol{r}},$$ 
with $S_{\boldsymbol{r}}$ to be chosen.
Let $\boldsymbol{r} = \boldsymbol{x_0}+ t\boldsymbol{w} = (x_{01}+tw_1, \cdots, x_{0r}+tw_r)$ be an infinite ray. Assume $\sup f$ is achieved on this ray. The claim is that there is some finite set $S_{\boldsymbol{r}}$ on which the supremum is attained. We proceed by induction on $r$. If $r=1$, then $f$ is constant, and we can choose $S = \{x=-1\}$. For $r>1$, there are two possibilities. 
\begin{enumerate}
    \item If $w_i=0$ for some $i$, say $w_1 = 0$, then $\ell_i$ restricted to $\mathbb{R}^{r-1}\cong \{\boldsymbol{x}\in \mathbb{R}^r: x_1 = x_{01}$ is again a linear function that is $\boldsymbol{v'}$-equivariant for some vector $\boldsymbol{v'}\in \mathbb{R}^{r-1}$, and for all $1\leq i\leq N$. By the induction hypothesis, we can choose a finite set $S_{\boldsymbol{r}}$ on which any function $f$ of the above form attains its max.
    \item Now assume $w_i\neq 0, 1\leq i \leq r$. Then $\boldsymbol{w} = c\boldsymbol{v}$ for some nonzero constant $c$, and $f$ takes the same value along $\boldsymbol{r}$ due to $\boldsymbol{v}$-invariance. Thus we can take $S_{\boldsymbol{r}} = \{\boldsymbol{x_0}\}$;
\end{enumerate}

Therefore, we have shown that $S$ chosen in the above way is finite. 
\end{proof}
\begin{cor}\label{usc_lem}
For any $\psi\in \cpsh(\xi)$, $\varphi\mapsto \sup_{X^{\an}} (\varphi-\psi)$ is continuous on $\psh(\xi)$.
\end{cor}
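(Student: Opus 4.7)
The plan is to reduce to the case $\psi\in\cH(\xi)$, where Lemma~\ref{lem-fs-finite-qm} replaces a sup over $X^{\an}$ by a sup over a finite set of quasi-monomial valuations, and then pass from $\cH(\xi)$ to $\psh(\xi)$ in the first argument via the decreasing-net approximation in Proposition~\ref{prop-psh-fs}(4).

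First, I would observe a basic uniform estimate: for any $\xi$-equivariant functions $\psi, \psi'$ with $\sup_{X_0}|\psi-\psi'|\leq\varepsilon$ and any $\varphi\in\psh(\xi)$, one has $|\sup_{X^{\an}}(\varphi-\psi)-\sup_{X^{\an}}(\varphi-\psi')|\leq\varepsilon$, using that $\varphi-\psi$ is $\xi$-invariant so the sup is taken over the compact set $X_0$ (and is finite, by Corollary~\ref{chap4-cor-qm-linear-growth} together with $\psi$ being bounded on $X_0$). Since $\cpsh(\xi)$ is the closure of $\cH(\xi)$ in $C^0(X^{\an},\xi)$ in the locally uniform topology, the continuity statement for general $\psi\in\cpsh(\xi)$ is a uniform limit of the corresponding statements for $\psi\in\cH(\xi)$, and so it suffices to treat $\psi\in\cH(\xi)$.

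Fix $\psi\in\cH(\xi)$ and let $S\subset X^{\an}$ be the finite set of quasi-monomial valuations provided by Lemma~\ref{lem-fs-finite-qm}, so that
\[
\sup_{X^{\an}}(\varphi-\psi)=\sup_{S}(\varphi-\psi)\qquad\text{for every }\varphi\in\cH(\xi).
\]
The key step is to promote this identity to all $\varphi\in\psh(\xi)$. Given such $\varphi$, Proposition~\ref{prop-psh-fs}(4) supplies a decreasing net $(\varphi_i)\subset\cH(\xi)$ with pointwise limit $\varphi$. On one hand, since $\varphi=\inf_i\varphi_i$, sup-attainment of the usc function $\varphi-\psi$ on the compact set $X_0$ gives $\sup_{X^{\an}}(\varphi-\psi)=\inf_i\sup_{X^{\an}}(\varphi_i-\psi)$. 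On the other hand, for the finite set $S$, a standard directedness argument shows that $\inf_i\sup_S(\varphi_i-\psi)=\sup_S\inf_i(\varphi_i-\psi)=\sup_S(\varphi-\psi)$: given $\varepsilon>0$, for each $s\in S$ pick $i_s$ with $\varphi_{i_s}(s)\le\varphi(s)+\varepsilon$, and use the directedness of the net together with finiteness of $S$ to obtain a common dominating index. Combining this with the identity for each $\varphi_i$ gives
\[
\sup_{X^{\an}}(\varphi-\psi)=\sup_{S}(\varphi-\psi)\qquad\text{for every }\varphi\in\psh(\xi).
\]

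Finally, continuity follows immediately: the topology on $\psh(\xi)$ is pointwise convergence on quasi-monomial valuations, and $S$ consists of finitely many such valuations, so the map $\varphi\mapsto\sup_S(\varphi-\psi)$ is a maximum of finitely many continuous evaluation functionals (minus the constants $\psi(s)$), hence continuous. Combined with the uniform reduction from $\cpsh(\xi)$ to $\cH(\xi)$ above, this proves the corollary. The only mildly delicate point is the exchange of $\inf_i$ and $\sup_S$ in the middle step, but this is where the finiteness of $S$—a consequence of the toric/combinatorial argument in Lemma~\ref{lem-fs-finite-qm}—does the essential work.
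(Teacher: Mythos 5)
Your proof is correct and follows essentially the same route as the paper: reduce to $\psi\in\cH(\xi)$ by uniform approximation on $X_0$, invoke Lemma~\ref{lem-fs-finite-qm} to replace the sup over $X^{\an}$ by a sup over a finite set $S$ of quasi-monomial valuations, and conclude from continuity of evaluation at points of $S$. The only difference is that you spell out the promotion of the identity $\sup_{X^{\an}}(\varphi-\psi)=\sup_S(\varphi-\psi)$ from $\varphi\in\cH(\xi)$ to $\varphi\in\psh(\xi)$ via the $\inf$--$\sup$ exchange over the finite set $S$, a step the paper uses implicitly when it applies the lemma to a net $(\varphi_j)$ in $\psh(\xi)$.
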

\begin{proof}
First assume $\psi\in\fs(\xi)$. Then this is a direct consequence of Lemma~\ref{lem-fs-finite-qm}. In general, let $\psi_i$ be a sequence of FS functions converging locally uniformly to $\psi$. Let $\varphi_j$ be a net of psh functions converging pointwise on quasi-monomial valuations to $\varphi$. Fix $\varepsilon>0$. Then for $i\gg 0$, $\sup_{X^{\an}}(\psi_i-\psi) = \sup_{X_0}(\psi_i-\psi)<\varepsilon$. Fix a such $i$. Then there is a finite set $S_i$ of quasi-monomial valuations such that
\[\sup_{X^{\an}}(\varphi_j-\psi)\leq \sup_{X^{\an}}(\varphi_j-\psi_i)+\sup_{X^{\an}}(\psi_i-\psi)  < \sup_{S_i}(\varphi_j-\psi_i)+\varepsilon.\]
Thus for $j\gg 0$ such that $\sup_{S_i}(\varphi_j-\varphi)<\varepsilon$, one has
\[\sup_{X^{\an}}(\varphi_j-\psi)< \sup_{S_i}(\varphi_j-\varphi)+\sup_{S_i}(\varphi-\psi_i)+\varepsilon< 2\varepsilon+\sup_{X^{\an}}(\varphi - \psi)+\sup_{X^{\an}}(\psi - \psi_i)< 3\varepsilon + \sup_{X^{\an}}(\varphi-\psi).\]
By symmetry, we can conclude.
\end{proof}

\begin{thm}
Given $\varphi, \psi\in \psh(\xi)$, $\varphi\leq \psi$ on $X^{\an}$ if and only if $\varphi\leq \psi$ on $X^{\qm}$. In particular, the topology of $\psh(\xi)$ is Hausdorff.
\end{thm}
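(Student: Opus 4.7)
The plan is as follows. The forward direction is trivial since $X^{\qm} \subset X^{\an}$, so I focus on the converse: assuming $\varphi \leq \psi$ on $X^{\qm}$, I want to show $\varphi \leq \psi$ on all of $X^{\an}$. The strategy is a two-step reduction to the combinatorial content of Lemma~\ref{lem-fs-finite-qm}.

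\emph{Step 1: reduce to the case $\psi \in \cH(\xi)$.} By Proposition~\ref{prop-psh-fs}(4), pick a decreasing net $\psi_k \in \cH(\xi)$ with $\psi_k \searrow \psi$ pointwise on $X^{\an}$. Since $\psi_k \geq \psi$, the hypothesis $\varphi \leq \psi$ on $X^{\qm}$ gives $\varphi \leq \psi_k$ on $X^{\qm}$. Granted the theorem when the upper function is FS, we would obtain $\varphi \leq \psi_k$ on $X^{\an}$, and passing to the limit in $k$ yields $\varphi \leq \psi$ on $X^{\an}$.

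\emph{Step 2: treat $\psi \in \cH(\xi)$.} Apply Lemma~\ref{lem-fs-finite-qm} to obtain a finite subset $S \subset X^{\qm}$ such that $\sup_{X^{\an}}(\varphi' - \psi) = \sup_S(\varphi' - \psi)$ for every $\varphi' \in \cH(\xi)$. Approximate $\varphi$ by a decreasing net $\varphi_j \in \cH(\xi)$ with $\varphi_j \searrow \varphi$, again via Proposition~\ref{prop-psh-fs}(4). For each $j$, one has $\sup_{X^{\an}}(\varphi_j - \psi) = \sup_S(\varphi_j - \psi)$. Since $S$ is finite and $\varphi_j(v) \searrow \varphi(v)$ for each $v \in S$, the right-hand side decreases to $\sup_S(\varphi - \psi) \leq 0$. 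Combining with the monotonicity bound $\sup_{X^{\an}}(\varphi - \psi) \leq \sup_{X^{\an}}(\varphi_j - \psi)$ (which holds because $\varphi \leq \varphi_j$) yields $\sup_{X^{\an}}(\varphi - \psi) \leq 0$, as desired.

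For the Hausdorffness statement, the equivalence just proved — applied to both $\varphi \leq \psi$ and $\psi \leq \varphi$ — shows that elements of $\psh(\xi)$ are determined by their restrictions to $X^{\qm}$. Hence any two distinct $\varphi, \psi \in \psh(\xi)$ differ at some quasi-monomial valuation $v$, and the evaluation-at-$v$ map separates them via disjoint open neighborhoods in the topology of pointwise convergence on $X^{\qm}$.

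I do not anticipate a serious obstacle here: the argument is a clean combination of the finite-set lemma and the existence of decreasing FS approximations. The only mildly delicate point is that Lemma~\ref{lem-fs-finite-qm} applies only when $\varphi'$ is itself FS, which forces the preliminary approximation of $\varphi$; but because the commutation of the supremum over a finite set with monotone pointwise limits is immediate, this presents no real difficulty.
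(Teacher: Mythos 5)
Your proof is correct and follows essentially the same route as the paper: reduce to the case where the upper function is Fubini--Study via a decreasing FS net, then invoke the finite set $S$ of quasi-monomial valuations from Lemma~\ref{lem-fs-finite-qm} together with a decreasing FS approximation of the lower function. The only (harmless) difference is that you pass directly from $\cH(\xi)$ to $\psh(\xi)$ using Proposition~\ref{prop-psh-fs}(4), whereas the paper inserts an intermediate step through $\cpsh(\xi)$ by uniform approximation.
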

\begin{proof}
If $\psi\in\fs(\xi)$, then this is a consequence of Lemma~\ref{lem-fs-finite-qm}. Arguing as in the previous corollary, this is true for $\psi\in \cpsh(\xi)$ since $\cpsh(\xi)$ is the closure of $\fs(\xi)$ under local uniform convergence. In general, pick a decreasing net $(\psi_i)$ in $\cpsh(\xi)$ with $\psi_i\to \psi$. Then $\varphi\leq \psi\leq \psi_i$ on $X^{\qm}$ implies $\varphi\leq \psi_i$ on $X^{\an}$ and hence $\varphi\leq \psi$ on $X^{\an}$.
\end{proof}

Finally, we define the Monge--Amp\`ere energy for general psh functions.
\begin{defn}
Let $\varphi\in \psh(\xi)$. The Monge--Amp\`ere energy for $\varphi$ is defined as
\[\E(\varphi) = \E_\xi(\varphi)\coloneqq \inf\{\E(\psi):  \varphi\leq \psi, \psi\in\cpsh(\xi)\}.\]
\end{defn}
In view of Proposition~\ref{prop-psh-fs}, we can also take $\psi\in \cH(\xi)$ in the above definition. The next proposition summarizes the main properties of $\E$.
\begin{prop}\label{chap4-prop-ma-energy-psh}
The Monge-Amp\`ere energy $E: \psh(\xi)\to \mathbb{R}\cup \{-\infty\}$ satisfies the following properties:
\begin{enumerate}
    \item $\E$ is non-decreasing.
    \item $\E(\varphi+c) = E(\varphi)+c$.
    \item $\E$ is upper semicontinuous.
    \item $\E$ is continuous along decreasing nets.
    \item $\E$ is concave.
\end{enumerate}
\end{prop}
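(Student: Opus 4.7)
The plan proceeds in the order (1), (2), (3), (4), (5). Items (1) and (2) are immediate from the envelope definition of $\E$, combined with the fact that $\E$ is non-decreasing and translation-equivariant on $\cpsh(\xi)$ (Proposition~\ref{chap4-prop-ma-properties}(5),(7)): if $\varphi\leq\psi$ then $\{\eta\in\cpsh:\eta\geq\psi\}\subset\{\eta\in\cpsh:\eta\geq\varphi\}$, and the $c$-shift bijects the two envelopes of $\varphi$ and $\varphi+c$ while shifting the energy by $c$. Item (5) will follow from (4) by a decreasing-net approximation.

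For (3), the plan is to establish the dual formula
\[
    \E(\varphi) \;=\; \inf_{\psi\in\cH(\xi)}\bigl[\E(\psi)+\sup_{X^{\an}\setminus\{o\}}(\varphi-\psi)\bigr].
\]
The inequality $\leq$ is clear: for any $\psi\in\cH(\xi)$, the function $\psi+\sup(\varphi-\psi)$ is a $\cpsh$-majorant of $\varphi$, and its energy equals $\E(\psi)+\sup(\varphi-\psi)$. For the inequality $\geq$, given any $\cpsh$-majorant $\eta\geq\varphi$, approximate $\eta$ locally uniformly from above by FS functions $\psi_k\in\cH(\xi)$ via Proposition~\ref{prop-psh-fs}(4); then $\sup(\varphi-\psi_k)\leq 0$ and $\E(\psi_k)\to\E(\eta)$, so the infimum over $\cH(\xi)$ is $\leq \E(\eta)$ for every such $\eta$. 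By Corollary~\ref{usc_lem}, $\varphi\mapsto\sup(\varphi-\psi)$ is continuous on $\psh(\xi)$ for each fixed $\psi\in\cH(\xi)$; hence $\E$ is an infimum of a family of continuous functionals and is therefore upper semicontinuous.

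For (4), let $(\varphi_i)$ be a decreasing net in $\psh(\xi)$ with limit $\varphi$. Monotonicity gives $\lim\E(\varphi_i)\geq\E(\varphi)$. For the reverse, fix $\eta\in\cpsh(\xi)$ with $\eta\geq\varphi$ and $\varepsilon>0$. On the compact space $X_0$, $(\varphi_i-\eta)$ is a decreasing net of usc functions (difference of a usc and a continuous function) with pointwise limit $\varphi-\eta\leq 0$. A Dini-type argument via a finite subcover of the open sets $\{\varphi_i-\eta<\varepsilon\}$ then yields $\sup_{X_0}(\varphi_i-\eta)\leq\varepsilon$ eventually. Since $\varphi_i-\eta$ is $\xi$-invariant and every $\xi$-orbit in $X^{\an}\setminus\{o\}$ meets the link $X_0=\{\varphi_\xi=0\}$, this bound propagates to $X^{\an}\setminus\{o\}$. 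Hence $\varphi_i\leq\eta+\varepsilon$ eventually, so $\E(\varphi_i)\leq\E(\eta)+\varepsilon$; taking $\limsup$, letting $\varepsilon\to 0$, and then taking the infimum over $\eta$ gives $\lim\E(\varphi_i)\leq\E(\varphi)$.

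Finally for (5), given $\varphi_1,\varphi_2\in\psh(\xi)$ and $t\in[0,1]$, pick decreasing nets $\psi_{j,i}\downarrow\varphi_j$ in $\cpsh(\xi)$ by definition. By Proposition~\ref{chap4-prop-cpsh-vector-space}(5), each convex combination $t\psi_{1,i}+(1-t)\psi_{2,i}$ lies in $\cpsh(\xi)$, and the resulting net decreases pointwise to $t\varphi_1+(1-t)\varphi_2$. Applying (4) together with the concavity of $\E$ on $\cpsh(\xi)$ (Proposition~\ref{chap4-prop-ma-properties}(6)) and passing to the limit yields
\[
    \E(t\varphi_1+(1-t)\varphi_2) \;=\; \lim_i \E\bigl(t\psi_{1,i}+(1-t)\psi_{2,i}\bigr) \;\geq\; \lim_i\bigl[t\E(\psi_{1,i})+(1-t)\E(\psi_{2,i})\bigr] \;=\; t\E(\varphi_1)+(1-t)\E(\varphi_2).
\]
The main technical obstacle is the Dini step in (4), where psh functions are merely upper semicontinuous (not continuous), so the classical Dini argument must be adapted via open subcovers of level sets, and one must also verify that the resulting bound on the compact link $X_0$ propagates back to $X^{\an}\setminus\{o\}$ through $\xi$-equivariance. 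Everything else reduces essentially to results already established on $\cpsh(\xi)$.
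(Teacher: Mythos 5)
Your proposal is correct, and for items (1), (2), (3), (5) it follows essentially the same route as the paper: (1) and (2) from the envelope definition, (3) via the continuity of $\varphi\mapsto\sup_{X^{\an}}(\varphi-\psi)$ on $\psh(\xi)$ (Corollary~\ref{usc_lem}) — your explicit dual formula $\E(\varphi)=\inf_{\psi\in\cH(\xi)}[\E(\psi)+\sup(\varphi-\psi)]$ is just a cleaner packaging of the paper's ``choose a majorant $\psi$ with $\E(\psi)<t-\varepsilon$ and a neighborhood where $\varphi'\leq\psi+\varepsilon$'' argument — and (5) by passing concavity on $\cpsh(\xi)$ through decreasing nets. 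The one place you diverge is (4): the paper deduces it in one line from (3) together with monotonicity (a decreasing net converges in the topology of pointwise convergence on quasi-monomial points, so usc gives $\limsup_i\E(\varphi_i)\leq\E(\varphi)$, while $\varphi_i\geq\varphi$ gives the reverse), whereas you re-prove it from scratch with a Dini-type argument for decreasing nets of usc functions on the compact link $X_0$, plus propagation by $\xi$-invariance. Your Dini argument is valid (the sets $\{\varphi_i-\eta<\varepsilon\}$ are open by upper semicontinuity of $\varphi_i$, increase along the directed index set, and cover $X_0$), and it has the virtue of being independent of the topology placed on $\psh(\xi)$; but given that you have already established (3), it is redundant work — the short deduction from (3) is available to you and is what the paper uses.
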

\begin{proof}
The first two assertions are obvious, and (4) is a consequence of upper semicontinuity and monotonicity. (5) follows from (4) and the fact that $E$ is concave on $\cpsh(\xi)$.
It remains to show upper semicontinuity. The proof goes along the same lines as in~\cite[Theorem 7.1]{BJ18}. Suppose that $\varphi\in \psh(\xi)$ and $t\in \mathbb{R}$ satisfies $E(\varphi)<t$. We can choose $\psi\in \cpsh(\xi)$ and $\varepsilon>0$ such that $\varphi\leq \psi$ and $E(\psi)<t-\varepsilon$. By Proposition~\ref{usc_lem}, we can find an open neighborhood $U$ of $\varphi$ in $\psh(\xi)$ such that $\varphi'\leq \psi+\varepsilon$ for all $\varphi'\in U$. Monotonicity then gives
\[E(\varphi')\leq E(\psi+\varepsilon) = t-\varepsilon+\varepsilon = t,\]
for all $\varphi'\in U$. This proves (3).
\end{proof}

\section{Functions of finite energy and mixed MA measures}\label{section: ExtendMA}
As suggested by the work of Berman--Boucksom--Jonsson~\cite{BBJ}, the non-Archimedean version of the finite energy class can play an important role in geometric applications. In the local case, we have a similar theory of functions of finite energy.
\begin{defn}
We say $\varphi\in \psh(\xi)$ is a function of \emph{finite energy} if $\E(\varphi)>-\infty$. The space of functions of finite energy is denoted by $\mathcal{E}^1(\xi).$
\end{defn}
The following is a direct consequence of Proposition~\ref{chap4-prop-ma-energy-psh}. 
\begin{prop}
The space $\mathcal{E}^1(\xi)$ satisfies the following:
\begin{enumerate}
    \item If $\varphi\in \mathcal{E}^1(\xi)$ and $c\in\mathbb{R}$, then $\varphi+c\in \mathcal{E}^1(\xi).$
    \item $\mathcal{E}^1(\xi)$ is convex.
    \item If $\varphi\leq \psi$ and $\varphi\in \mathcal{E}^1(\xi)$, then $\psi\in \mathcal{E}^1(\xi)$.
    \item $\cpsh(\xi)\subset \mathcal{E}^1(\xi).$
\end{enumerate}
\end{prop}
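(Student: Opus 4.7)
The proposition is flagged as a direct consequence of Proposition~\ref{chap4-prop-ma-energy-psh}, and indeed each of the four items reduces quickly to a single property of $\E$ on $\psh(\xi)$. My plan is to dispatch the four items in turn, the first three essentially formal and the fourth requiring one small consistency check.

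For item (1), I would invoke translation invariance $\E(\varphi+c)=\E(\varphi)+c$ from part (2) of Proposition~\ref{chap4-prop-ma-energy-psh}: if $\E(\varphi)>-\infty$ then $\E(\varphi+c)>-\infty$. For item (2), convexity of $\psh(\xi)$ (Proposition~\ref{prop-psh-fs}) places $t\varphi+(1-t)\psi$ in $\psh(\xi)$ for $t\in[0,1]$, and concavity of $\E$ (part (5) of Proposition~\ref{chap4-prop-ma-energy-psh}) yields
\[
\E(t\varphi+(1-t)\psi)\geq t\E(\varphi)+(1-t)\E(\psi)>-\infty.
\]
For item (3), monotonicity of $\E$ (part (1) of Proposition~\ref{chap4-prop-ma-energy-psh}) gives $\E(\psi)\geq\E(\varphi)>-\infty$.

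Item (4) is the only step that is not purely formal, since the definition of $\E$ used on $\psh(\xi)$ is an infimum over majorizing cpsh functions, and one must verify it agrees with the original functional of Section~\ref{section: EnergyOnPSH} when restricted to $\cpsh(\xi)$. The plan here is to observe that for $\varphi\in\cpsh(\xi)$ the test class $\{\psi\in\cpsh(\xi):\psi\geq\varphi\}$ contains $\varphi$ itself, and monotonicity of $\E$ on $\cpsh(\xi)$ (which follows from Proposition~\ref{chap4-prop-ma-properties}(5)) forces the infimum to be attained at $\psi=\varphi$. Hence the extended $\E(\varphi)$ equals the original expression, which is a finite sum of integrals of the bounded continuous function $\varphi-\varphi_\xi$ against Radon probability measures on $X_0$, and in particular lies in $\RR$.

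The main obstacle, if any, is the routine but necessary reconciliation of the two definitions of $\E$ in item (4); once monotonicity on $\cpsh(\xi)$ is invoked this is immediate, and the remaining items follow mechanically from Proposition~\ref{chap4-prop-ma-energy-psh}.
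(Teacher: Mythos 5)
Your proof is correct and matches the paper's intent exactly: the paper simply records the proposition as a direct consequence of Proposition~\ref{chap4-prop-ma-energy-psh}, and your item-by-item reduction (translation invariance, concavity, monotonicity) is precisely that argument spelled out. Your extra check in item (4) that the extended $\E$ restricts to the original functional on $\cpsh(\xi)$, via monotonicity and the membership of $\varphi$ in its own test class, is a worthwhile clarification but not a departure from the paper's route.
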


\subsection{Mixed Monge--Amp\`ere measures}
We are now ready to establish an analogue of~\cite[Theorem 6.9]{BJ18v1}, and the proof is the same. We include a proof for the sake of completeness. Here is the version of the theorem we will prove. This in particular is the main part of Theorem~\ref{main-thm-pluripotential}.
\begin{thm}\label{chap4-thm-mixed-ma}
Let $p$ be any integer such that $0\leq p\leq n-1$.
For any given $(n-1)$-tuple $(\varphi_1,\cdots,\varphi_{n-1})$ with $\varphi_1, \cdots, \varphi_p\in \mathcal{E}^1(\xi)$, and $\varphi_{p+1}, \cdots, \varphi_{n-1}\in \fs(\xi)$, 
there exists a unique Radon probability measure $\ma(\varphi_1, \cdots, \varphi_{n-1})$ on $X^{\an}\setminus\{0\}$ supported on $X_0$ such that the following are true:
\begin{enumerate}[(1)]
    \item If $\varphi_1,\cdots, \varphi_{p}\in \cpsh(\xi),$
    $$\ma(\varphi_1, \cdots, \varphi_{n-1}) =\vol(\xi)^{-1} d'd''\varphi_1\wedge\cdots \wedge d'd''\varphi_{n-1}\wedge d'd''\varphi_\xi^+.$$
    \item $\int_{X^{\an}} (\psi-\varphi_\xi) \ma(\varphi_1, \cdots, \varphi_{n-1}) >-\infty$ when $\psi, \varphi_1, \cdots, \varphi_{p}\in \mathcal{E}^1(\xi).$
    \item the pairing
    \[(\psi, \varphi_1, \cdots, \varphi_{p})\mapsto 
    \int_{X^{\an}} \psi \ma (\varphi_1,\cdots, \varphi_{n-1})\]
    is continuous along decreasing nets in $\mathcal{E}^1(\xi).$
\end{enumerate}
In particular, when $p=n-1$, the theorem defines a mixed Monge--Amp\`ere measure on $\cE^1(\xi)$.
\end{thm}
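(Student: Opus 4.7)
The plan is to construct the measure $\ma(\varphi_1,\ldots,\varphi_{n-1})$ by monotone approximation from continuous psh functions, following the strategy of~\cite[Theorem 6.9]{BJ18v1}. Uniqueness is almost immediate from (3): the pairing $\int\psi\,\ma(\varphi_1,\ldots,\varphi_{n-1})$ is determined for all $\psi\in\cpsh(\xi)$ by continuity along decreasing nets, and since $\dcpsh_\xi(X)$ is dense in $C^0(X_0)$ by Theorem~\ref{density}, the measure itself is uniquely determined.

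For existence, normalize $\sup\varphi_i=0$ and use Proposition~\ref{prop-psh-fs}(4) to choose decreasing nets $\varphi_i^{(k)}\in\cH(\xi)$ with $\varphi_i^{(k)}\searrow\varphi_i$ for $i\le p$. Set
\[
\mu_k := \ma(\varphi_1^{(k)},\ldots,\varphi_p^{(k)},\varphi_{p+1},\ldots,\varphi_{n-1}),
\]
a Radon probability measure on $X_0$ by Definition-Proposition~\ref{chap4-def-prop-MA-meas}. Testing against $u=u_1-u_2\in\dcpsh_\xi(X)$ with $u_j\in\cpsh(\xi)$, and applying Proposition~\ref{prop-holder-cpsh} one variable at a time, one bounds $|\int u\,(\mu_k-\mu_{k'})|$ by a sum of terms of the form $\I(\varphi_i^{(k)},\varphi_i^{(k')})^{\alpha_n}$ times a $\J$-factor. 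Since $\E(\varphi_i^{(k)})\searrow\E(\varphi_i)>-\infty$ (Proposition~\ref{chap4-prop-ma-energy-psh}) and the normalization gives $\varphi_i^{(k)}\le 0$, we control $\J(\varphi_i^{(k)})$ uniformly in $k$, while $\I(\varphi_i^{(k)},\varphi_i^{(k')})\to 0$ as $k,k'\to\infty$ by the same energy convergence. Hence $\int u\,\mu_k$ is Cauchy; by weak-$*$ compactness of probability measures on the compact set $X_0$ and a diagonal argument, the net $(\mu_k)$ converges weakly to a probability measure $\ma(\varphi_1,\ldots,\varphi_{n-1})$ independent of the approximation.

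Property (1) holds by construction. For (2), approximate $\psi$ from above by a decreasing net $\psi^{(k)}\in\cpsh(\xi)$ with $\E(\psi^{(k)})\searrow\E(\psi)>-\infty$; integration by parts (Proposition~\ref{prop-int-by-parts}) and Cauchy--Schwarz (Corollary~\ref{Cor-CSInequality}) bound $\int(\psi^{(k)}-\varphi_\xi)\mu_k$ by sums of products $\|\psi^{(k)}-\varphi_\xi\|\cdot\|\varphi_i^{(k)}-\varphi_\xi\|$, all controlled by the finite energies; finiteness persists in the limit by monotone convergence. For (3), continuity along decreasing nets in $\cE^1(\xi)$ follows by combining the Hölder estimate of Proposition~\ref{prop-holder-cpsh} with monotonicity and the continuity of $\E$ along decreasing nets in Proposition~\ref{chap4-prop-ma-energy-psh}(4).

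The main obstacle is obtaining uniform control of the $\J$-functional along decreasing nets approximating a merely finite-energy function, and iterating Proposition~\ref{prop-holder-cpsh} one variable at a time, since the estimate there only compares Monge--Amp\`ere measures differing in a single factor. A second technical point is reducing the test-function class from $C^0(X_0)$ to $\dcpsh_\xi(X)$ while preserving weak convergence; this is precisely where Theorem~\ref{density} is essential, and it is what guarantees the limit measure is intrinsic to $(\varphi_1,\ldots,\varphi_{n-1})$ rather than an artifact of the chosen approximations.
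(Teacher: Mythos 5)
Your proposal takes a genuinely different route from the paper. The paper argues by induction on $p$: the pairing $\int(\psi-\varphi_\xi)\,\ma(\varphi_1,\dots,\varphi_{n-1})$ for $\psi\in\cH(\xi)$ is \emph{defined} by an integration-by-parts recursion that exchanges $\varphi_p$ with $\psi$ (thereby reducing to $p-1$ finite-energy slots), the measure then exists by density of $\dcpsh_\xi(X)$ in $C^0(X_0)$, property (2) follows from the uniform lower bound $\int\varphi_0\,\ma(\varphi_1,\dots,\varphi_{n-1})\ge\int\varphi_0\,\ma(\varphi_\xi)-C_n\max_i\J(\varphi_i)$ of Lemma~\ref{chap4-lem-finite-on-E1}, and property (3) is proved softly: the $\limsup$ half from upper semicontinuity of $\mu\mapsto\int\psi\,\mu$ under weak convergence for usc $\psi$ (\cite[Corollary 2.25]{BFJ15}), the $\liminf$ half from the explicit monotonicity inequality of Lemma~\ref{chap4-lem-cts-dec-nets}. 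You instead build the measure as a weak-$*$ limit controlled by quantitative H\"older estimates. That is a legitimate alternative (and your Cauchy claim is sound: for a decreasing net in $\cH(\xi)$ one has $\I(\varphi^{(k)},\varphi^{(k')})\le n\,|\E(\varphi^{(k)})-\E(\varphi^{(k')})|$ by Proposition~\ref{prop-IJ-formula}, which tends to $0$ since $\E(\varphi^{(k)})\searrow\E(\varphi)>-\infty$), but two points need repair.

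First, Proposition~\ref{prop-holder-cpsh} is stated only for the non-mixed measures $\ma(\psi)$, $\ma(\psi')$; your ``one variable at a time'' comparison of $\mu_k$ and $\mu_{k'}$ requires a mixed version, i.e.\ a bound on $\int u\,d'd''(\psi-\psi')\wedge d'd''\tau_1\wedge\cdots\wedge d'd''\tau_{n-3}\wedge d'd''\varphi_\xi^+$ with distinct entries in the remaining slots. This does follow from Corollary~\ref{Cor-CSInequality} together with Corollary~\ref{cor-norm-holder}, but it is not in the paper and must be stated and checked. Second, and more seriously, your proof of (3) is circular as written: the decreasing nets in (3) live in $\cE^1(\xi)$, not in $\cH(\xi)$, so invoking H\"older estimates for $\I(\varphi_i^j,\varphi_i)$ and for $\int\psi^j(\mu^j-\mu)$ presupposes that $\I$, $\J$ and the mixed measures are already defined and estimated on $\cE^1(\xi)$ --- which is precisely the content of the theorem (these extensions only appear in Proposition~\ref{prop-IJ-E1}, \emph{after} the theorem). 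The argument can be untangled by first proving existence, (1), (2) and continuity along decreasing nets \emph{in $\cH(\xi)$} via your Cauchy scheme, then extending the estimates to $\cE^1(\xi)$ by approximation, and only then deducing (3); but this ordering has to be made explicit. Note also that the $\liminf$ half of (3) does not follow from ``monotonicity'' alone --- some substitute for Lemma~\ref{chap4-lem-cts-dec-nets} (or the extended H\"older estimate) is genuinely needed there, since $\int\psi\,\mu^j\to\int\psi\,\mu$ can fail from below for a merely usc $\psi$ under weak convergence.
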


Before proving the theorem, we need the following auxilliary lemma. 
\begin{lem}\label{chap4-lem-finite-on-E1}
    Let $\varphi_0, \cdots, \varphi_{n-1}\in \cH(\xi)$. Then
    \[\int_{X^{\an}} \varphi_0 \ma(\varphi_1, \cdots, \varphi_{n-1})\geq \int_{X^{\an}}\varphi_0\ma(\varphi_\xi) -C_n \max \J(\varphi_i)\]
    for some positive dimensional constant $C_n$, where 
    $\J(\varphi) = \int_{X^{\an}} \varphi\ma(\varphi_\xi) =\varphi(v_{\triv})-\E(\varphi)$.
\end{lem}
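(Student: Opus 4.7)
The plan is to write the difference
\[
\int_{X^{\an}} \varphi_0\,\ma(\varphi_1,\ldots,\varphi_{n-1})-\int_{X^{\an}} \varphi_0\,\ma(\varphi_\xi)
\]
as a telescoping sum obtained by swapping one $\varphi_k$ at a time for $\varphi_\xi$, and to bound each summand via the Cauchy--Schwarz Inequality and the seminorm estimates of the previous section. Setting
\[
\mu_k:=\ma(\varphi_1,\ldots,\varphi_k,\underbrace{\varphi_\xi,\ldots,\varphi_\xi}_{n-1-k}),
\]
the multilinearity of the mixed MA measure gives
\[
\int\varphi_0(\mu_{n-1}-\mu_0)=\sum_{k=1}^{n-1}\int\varphi_0(\mu_k-\mu_{k-1}),
\]
where each summand equals $\vol(\xi)^{-1}\int\varphi_0\, d'd''(\varphi_k-\varphi_\xi)\wedge\omega_k$ for the natural form
\[
\omega_k=d'd''\varphi_1\wedge\cdots\wedge d'd''\varphi_{k-1}\wedge(d'd''\varphi_\xi)^{n-1-k}\wedge d'd''\varphi_\xi^+.
\]

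Next I would exploit that every $\mu_k$ is supported on $X_0=\{\varphi_\xi=0\}$, so the signed measure $\mu_k-\mu_{k-1}$ is supported there as well. Writing $u_i:=\varphi_i-\varphi_\xi$, each $u_i$ is $\xi$-invariant and lies in $\dcpsh_\xi(X)$, and the support observation gives $\int\varphi_0(\mu_k-\mu_{k-1})=\int u_0(\mu_k-\mu_{k-1})$. Applying the Cauchy--Schwarz Inequality (Corollary~\ref{Cor-CSInequality}) then yields
\[
\Bigl|\int u_0\, d'd'' u_k\wedge\omega_k\Bigr|\le \|u_0\|_{\omega_k}\,\|u_k\|_{\omega_k},
\]
which is applicable since $\omega_k$ uses only functions in $\cpsh(\xi)$.

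The final step is to estimate each seminorm via Corollary~\ref{cor-norm-holder}: for $i\in\{0,k\}$,
\[
\|u_i\|^2_{\omega_k}\;\lesssim\;\vol(\xi)\,\I(\varphi_i,\varphi_\xi)^{\beta_n}\,\max\bigl\{\J(\varphi_i),\J(\varphi_\xi),\J(\varphi_1),\ldots,\J(\varphi_{k-1})\bigr\}^{1-\beta_n}.
\]
Since $\J(\varphi_\xi)=\int\varphi_\xi\,\ma(\varphi_\xi)=0$ and $\I(\varphi_i,\varphi_\xi)\approx\J(\varphi_i)$ by Proposition~\ref{prop-IJ-formula}(3), the elementary bound $a^{\alpha}b^{1-\alpha}\le\max(a,b)$ gives $\|u_i\|^2_{\omega_k}\lesssim \vol(\xi)\max_j\J(\varphi_j)$, hence each telescoping term is bounded in absolute value by $C\max_j\J(\varphi_j)$. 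Summing the $n-1$ terms produces
\[
\Bigl|\int\varphi_0\,\ma(\varphi_1,\ldots,\varphi_{n-1})-\int\varphi_0\,\ma(\varphi_\xi)\Bigr|\le C_n\max_j\J(\varphi_j),
\]
from which the claimed lower bound follows at once.

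The only real subtlety is the move from the $\xi$-equivariant $\varphi_0$ to the $\xi$-invariant $u_0$: without passing to $\dcpsh_\xi(X)$ neither the Cauchy--Schwarz Inequality nor Corollary~\ref{cor-norm-holder} would apply. The support property $\text{supp}(\mu_k)\subseteq X_0$ (together with $\varphi_\xi\equiv 0$ on $X_0$) is what makes this substitution free of charge, and I expect this to be the only point in the argument that requires more than bookkeeping; everything else is a routine combination of the tools already developed.
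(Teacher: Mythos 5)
Your proof is correct, but it follows a genuinely different route from the paper's. The paper normalizes $\sup_{X_0}\varphi_i=0$, forms the average $\phi=\tfrac1n\sum_{i=0}^{n-1}\varphi_i$, and argues in two steps: concavity of $\E$ gives $\E(\phi)\ge\min_i\E(\varphi_i)=-\max_i\J(\varphi_i)$, while the monotonicity of the energy terms (Proposition~\ref{chap4-prop-ma-properties}(2)) combined with the current domination $d'd''\varphi_i\le n\,d'd''\phi$ and the sign $\varphi_0\le 0$ compares $\E(\phi)$ with a dimensional multiple of $\int\varphi_0\,\ma(\varphi_1,\dots,\varphi_{n-1})$. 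You instead telescope through the mixed measures $\mu_k$ and control each increment by Cauchy--Schwarz (Corollary~\ref{Cor-CSInequality}) and the H\"older-type seminorm bound (Corollary~\ref{cor-norm-holder}), after replacing $\varphi_0$ by $u_0=\varphi_0-\varphi_\xi$ via the support property $\supp(\mu_k)\subset X_0=\{\varphi_\xi=0\}$ --- a substitution you rightly flag as the one non-routine point, and which is valid. The remaining steps check out: $\omega_k$ carries exactly $n-2$ factors from $\cpsh(\xi)$ besides $d'd''\varphi_\xi^+$, so both corollaries apply, and $\I(\varphi_i,\varphi_\xi)\approx\J(\varphi_i)$ (Proposition~\ref{prop-IJ-formula}(3)) together with $\J(\varphi_\xi)=0$ gives $\|u_i\|^2_{\omega_k}\lesssim\vol(\xi)\max_j\J(\varphi_j)$. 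What your route buys is a two-sided bound $\bigl|\int\varphi_0(\ma(\varphi_1,\dots,\varphi_{n-1})-\ma(\varphi_\xi))\bigr|\lesssim\max_j\J(\varphi_j)$, stronger than the stated one-sided inequality and closer in spirit to Proposition~\ref{prop-holder-cpsh}, and it requires no normalization since $\I$, $\J$ and the seminorms are translation-invariant; what the paper's route buys is independence from the H\"older machinery, resting only on concavity of $\E$ and positivity of currents.
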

\begin{proof}
    We may assume $\max \varphi_i =0$ on $X_0$ for all $i$. Write $\phi\coloneqq \frac 1n \sum_{i=0}^{n-1}\varphi_i$. Since $E$ is concave, we have
    \[\E(\varphi)\geq \frac 1n \sum_{i=0}^{n-1} \E(\varphi_i)\geq \min_i\E(\varphi_i)=\min_i\{\int_{X^{\an}} \varphi_i\ma(\varphi_\xi)-\J(\varphi_i)\}\geq-\max_i\J(\varphi_i).\]
    On the other hand, by Proposition~\ref{chap4-prop-ma-properties} (2), 
    \begin{align*}
        \E(\varphi)&=\frac{1}{n\vol(\xi)}\sum_{j=0}^n \int_{X^{\an}} \varphi (d'd''\varphi)^j\wedge (d'd''\varphi_\xi)^{n-1-j}\wedge d'd''\varphi^+\\
        &\leq \frac{1}{\vol(\xi)}\int_{X^{\an}} \varphi (d'd''\varphi)^{n-1}\wedge d'd''\varphi^+
        \leq \frac{1}{n\vol(\xi)}\int_{X^{\an}} \varphi_0(d'd''\varphi)^{n-1}\wedge d'd''\varphi^+\\
        &\leq \frac{n!}{n^{n+1}}\int_{X^{\an}} \varphi_0\ma(\varphi_1, \cdots, \varphi_{n-1}).
    \end{align*}
\end{proof}

\begin{proof}[Proof of Theorem~\ref{chap4-thm-mixed-ma}]
As in~\cite{BJ18v1}, we prove existence by induction on $p$.
When $p=0$, $\ma(\varphi_1, \cdots, \varphi_n)$ is defined as in (1) and it suffices to show that for any $\psi\in \psh(\xi)$, $\psi>-\infty$ on the set of quasi-monomial valuations in $X_0$. This was proved in Corollary~\ref{chap4-cor-qm-linear-growth}.

Now assume the theorem is true for $p-1$. Define $\ma (\varphi_1, \cdots, \varphi_{n-1})$ inductively as follows: 
\begin{align*}
\int_{X^{\an}} (\psi-\varphi_\xi) \ma (\varphi_1, \cdots, \varphi_{n-1})&: = \int_{X^{\an}} (\varphi_p-\varphi_\xi) \ma (\varphi_1, \cdots, \varphi_{p-1}, \psi, \varphi_{p+1}, \cdots, \varphi_{n-1} )\\
&+ \int_{X^{\an}} (\psi-\varphi_p) \ma (\varphi_1, \cdots, \varphi_{p-1}, \varphi_\xi, \varphi_{p+1}, \cdots, \varphi_{n-1} )
\end{align*}
for all $\psi\in \fs(\xi)$.
The right hand side is well-defined by induction.
Thus the measure $\ma(\varphi_1, \cdots,\varphi_{n-1})$ is a well-defined Radon probability measure by density of $\dcpsh(X)$ in $C^0(X_0)$. It is clear that when $\varphi_1, \cdots, \varphi_{p}\in\cpsh(\xi)$, the above equality holds as a result of integration by parts. Furthermore, the measure is continuous along decreasing nets of $(\varphi_1, \cdots, \varphi_p)$.

To prove (2), we may assume $\psi, \varphi_i\leq 0$. Set 
\[B\coloneqq \max\{-\E(\psi), -\E(\varphi_1), \cdots, -\E(\varphi_{n-1})\}<\infty.\]
Let $(\psi^j), (\varphi_i^j), 1\leq i\leq p$ be decreasing nets of FS functions converging to $\psi$ and $\varphi_i$ respectively. Then $J(\psi^j)\leq B$, and $J(\varphi_i^j)\leq B$ for $1\leq i\leq n-1$. Fix an index $j_0$, and for $j\geq j_0$ we can apply Lemma~\ref{chap4-lem-finite-on-E1} to get
\begin{align*}
    \int_{X^{\an}} \psi^{j_0}\ma(\varphi_1^j, \cdots, \varphi_p^j, \varphi_{p+1}, \cdots, \varphi_{n-1})&\geq 
    \int_{X^{\an}} \psi^{j}\ma(\varphi_1^j, \cdots, \varphi_p^j, \varphi_{p+1}, \cdots, \varphi_{n-1})\\
    &\geq \int_{X^{\an}}\psi^j\ma(\varphi_\xi) -C_nB.
\end{align*}
Letting $j\to \infty$ we get 
\[\int_{X^{\an}} \psi^{j_0}\ma(\varphi_1^j, \cdots, \varphi_p^j, \varphi_{p+1}, \cdots, \varphi_{n-1})\geq \int_{X^{\an}}\psi^{j_0}\ma(\varphi_\xi)-C_nB>-\infty.\]
Now by~\cite[Proposition 7.12]{folland} we have
\begin{align*}
    &\int_{X^{\an}} \psi\ma(\varphi_1^j, \cdots, \varphi_p^j, \varphi_{p+1}, \cdots, \varphi_{n-1}) \\
    &= \lim_{j_0\to \infty}\int_{X^{\an}} \psi^{j_0}\ma(\varphi_1^j, \cdots, \varphi_p^j, \varphi_{p+1}, \cdots, \varphi_{n-1})>-\infty.
\end{align*}

We now prove (3). Let $(\psi^j), (\varphi_i^j), 1\leq i\leq p$ be decreasing nets in $\cE^1(\xi)$ converging to $\psi$ and $\varphi_i$ respectively. Set $\mu^j = \ma(\varphi_1^j, \cdots, \varphi_p^j, \varphi_{p+1},\cdots, \varphi_{n-1})$. Then $(\mu^j)$ is a net of Radon probability measures converging weakly to $\mu\coloneqq \ma(\varphi_1, \cdots, \varphi_{n-1})$. Thus by~\cite[Corollary 2.25]{BFJ15}, we have
\[\limsup_j \int_{X^{\an}} \psi^j\mu^j\leq \int_{X^{\an}}\psi \mu.\]
The reverse direction is given by Lemma~\ref{chap4-lem-cts-dec-nets} below. Indeed, the lemma shows that for each $j$,
\begin{align*}
    &\int_{X^{\an}}\psi\mu^j\geq \int_{X^{\an}}\psi\mu^j\\
    &\geq \int_{X^{\an}}\psi \mu+\sum_{i=1}^p\int_{X^{\an}}(\varphi_i-\varphi_i^j)\ma(\varphi_1, \cdots, \varphi_{i-1}, \varphi_\xi,  \varphi_{i+1}^j, \cdots,\varphi_p^j, \varphi_{p+1}, \cdots, \varphi_{n-1})
\end{align*}
By the inductive hypothesis, the sum on the right hand side goes to $0$ as $j\to\infty$. Thus
\[\liminf_j \int_{X^{\an}}\psi^j\mu^j\geq \int_{X^{\an}}\psi\mu.\]
This completes the proof of the theorem.
\end{proof}
\begin{lem}\label{chap4-lem-cts-dec-nets}
    Let $\psi, \varphi_i'\geq \varphi_i, i=1, \cdots, p$ be functions in $\cE^1(\xi)$. Then
    \begin{align*}
        &\int_{X^{\an}} \psi\ma(\varphi_1', \cdots, \varphi_p', \varphi_{p+1},\cdots, \varphi_{n-1})\\
        &\geq \int_{X^{\an}} \psi\ma(\varphi_1, \cdots, \varphi_p, \varphi_{p+1},\cdots, \varphi_{n-1})\\
        &\indent+\sum_{i=1}^p \int_{X^{\an}}(\varphi_i-\varphi_i')\ma(\varphi_1, \cdots, \varphi_{i-1}, \varphi_\xi, \varphi_{i+1}', \cdots, \varphi_p', \varphi_{p+1}, \cdots, \varphi_{n-1}).
    \end{align*}
\end{lem}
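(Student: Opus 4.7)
The plan is to first prove the inequality when $\psi,\varphi_i,\varphi_i'$ all lie in $\cpsh(\xi)$ by a telescoping integration-by-parts argument together with positivity, and then extend it to $\cE^1(\xi)$ via an approximation that respects the inductive structure of Theorem~\ref{chap4-thm-mixed-ma}. In the continuous psh case, define intermediate measures
\[
\mu_j\coloneqq\ma(\varphi_1,\ldots,\varphi_j,\varphi_{j+1}',\ldots,\varphi_p',\varphi_{p+1},\ldots,\varphi_{n-1})
\]
for $0\leq j\leq p$, so that $\mu_0$ and $\mu_p$ are the measures appearing on the left- and right-hand sides of the lemma. Multilinearity (Definition-Proposition~\ref{chap4-def-prop-MA-meas}) yields
\[
\mu_{j-1}-\mu_j=\vol(\xi)^{-1}d'd''(\varphi_j'-\varphi_j)\wedge A_j,
\]
where $A_j$ collects the remaining $n-2$ factors together with $d'd''\varphi_\xi^+$. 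Since every $\mu_i$ is supported on $X_0=\{\varphi_\xi=0\}$, we may freely replace $\psi$ by the $\xi$-invariant function $\psi-\varphi_\xi\in\dcpsh_\xi(X)$; together with $\varphi_j'-\varphi_j\in\dcpsh_\xi(X)$, Proposition~\ref{prop-int-by-parts} gives
\[
\int\psi(\mu_{j-1}-\mu_j)=\vol(\xi)^{-1}\int(\varphi_j'-\varphi_j)\,d'd''(\psi-\varphi_\xi)\wedge A_j.
\]
Splitting the right-hand side into a $d'd''\psi$-contribution and a $d'd''\varphi_\xi$-contribution, the first is nonnegative because $\varphi_j'-\varphi_j\geq 0$ and $d'd''\psi\wedge A_j$ is a positive measure (a wedge of positive $(1,1)$-currents coming from $\cpsh(\xi)$ functions and $d'd''\varphi_\xi^+$), while the second reassembles precisely into $-\int(\varphi_j'-\varphi_j)\,\nu_j$ for the measure $\nu_j$ in the statement. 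Summing over $j$ yields the inequality in this case.

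To extend to general $\varphi_i,\varphi_i'\in\cE^1(\xi)$ with $\varphi_i'\geq\varphi_i$, I would choose decreasing nets $\varphi_i^k\downarrow\varphi_i$ and $\varphi_i'^{,k}\downarrow\varphi_i'$ in $\cpsh(\xi)$, replacing $\varphi_i'^{,k}$ by $\max\{\varphi_i'^{,k},\varphi_i^k\}\in\cpsh(\xi)$ to preserve monotonicity at each level. The measures $\mu,\mu'$, although at level $p$, are continuous along such decreasing nets in the $\varphi_i$'s by the inductive construction of the mixed MA measure inside Theorem~\ref{chap4-thm-mixed-ma} (which does not rely on part~(3)); the measures $\nu_j$ have one slot occupied by $\varphi_\xi$, so they live at level $p-1$, and continuity for them follows from the inductive hypothesis. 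Finally, approximate $\psi$ by a decreasing net $\psi^k\downarrow\psi$ in $\cpsh(\xi)$ and invoke monotone convergence, valid because the relevant integrals are finite by part~(2) of Theorem~\ref{chap4-thm-mixed-ma}.

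The main subtlety is avoiding circularity: the lemma is used in the proof of part~(3) of Theorem~\ref{chap4-thm-mixed-ma} at level $p$, so during the approximation only the weaker continuity of $\ma$ along decreasing nets of the $\varphi_i$'s may be invoked at level $p$, and this is already part of the inductive definition; the full joint continuity is needed only for the lower-level measures $\nu_j$, where the inductive hypothesis supplies it. A secondary, purely technical, point is that the inner factors of $A_j$ lie a priori in $\cpsh(\xi)$ rather than $\dcpsh(X,\xi)$; but any $\varphi\in\cpsh(\xi)$ can be written as $(\varphi+\rho)-\rho$ for suitable $\rho\in\cpsh(b\xi)$ with $\varphi+\rho\in\cpsh(a\xi)$, $a=b/(1+b)$, placing $\varphi$ in $\dcpsh(X,\xi)$ so that Proposition~\ref{prop-int-by-parts} applies.
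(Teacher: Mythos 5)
Your argument is essentially the paper's own proof: reduce to FS/continuous data by decreasing nets, telescope through the intermediate mixed measures, and use integration by parts to trade $d'd''(\varphi_j'-\varphi_j)$ against $d'd''(\psi-\varphi_\xi)$, with the $d'd''\psi$ contribution discarded by positivity and the $d'd''\varphi_\xi$ contribution reassembling into the stated correction terms. Your extra care about preserving $\varphi_i'^{,k}\geq\varphi_i^k$ via a max, about $\cpsh(\xi)\subset\dcpsh(X,\xi)$, and about which continuity statements are available without circularity are points the paper's proof leaves implicit, but the route is the same.
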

\begin{proof}
    We may assume $\psi\in \cH(\xi)$ since Monge--Amp\`ere measures are Radon probability measures. Since these measures are also continuous along decreasing nets in the first $p$ arguments, we may in addition assume that all $\varphi_i, \varphi_i'$ are FS functions. Then integration by parts gives 
    \begin{align*}
        &\int_{X^{\an}}\psi\ma(\varphi_1', \cdots, \varphi_p', \varphi_{p+1},\cdots, \varphi_{n-1}) - \int_{X^{\an}}\psi\ma(\varphi_1,\varphi_2', \cdots, \varphi_p', \varphi_{p+1},\cdots, \varphi_{n-1})\\
        &=\int_{X^{\an}}(\varphi_1'-\varphi_1)\ma(\psi,\varphi_2', \cdots, \varphi_p', \varphi_{p+1},\cdots, \varphi_{n-1}) \\
        &\indent - \int_{X^{\an}}(\varphi_1'-\varphi_1)\ma(\varphi_\xi, \varphi_2',\cdots, \varphi_p', \varphi_{p+1},\cdots, \varphi_{n-1}).\\
    \end{align*}
    Thus we have
    \begin{align*}
        &\int_{X^{\an}}\psi\ma(\varphi_1', \cdots, \varphi_p', \varphi_{p+1},\cdots, \varphi_{n-1}) \geq \int_{X^{\an}}\psi\ma(\varphi_1,\varphi_2', \cdots, \varphi_p', \varphi_{p+1},\cdots, \varphi_{n-1})\\
        & +\int_{X^{\an}}(\varphi_1-\varphi_1')\ma(\varphi_\xi,\varphi_2', \cdots, \varphi_p', \varphi_{p+1},\cdots, \varphi_{n-1}).
    \end{align*}
    Iterating this argument for $\int_{X^{\an}}\psi\ma(\varphi_1,\varphi_2', \cdots, \varphi_p', \varphi_{p+1},\cdots, \varphi_{n-1})$ we get the desired result.
\end{proof}

The following is an immediate corollary of the previous theorem.
\begin{cor}
    Given $\varphi, \psi\in \cE^1(\xi)$, we have
    \begin{enumerate}
        \item The Monge--Amp\`ere energy of $\varphi$ satisfies
        \[\E(\varphi) = \frac{1}{n}\sum_{i=0}^{n-1}\int_{X^{\an}} \varphi \ma(\varphi^{[i]}; \varphi_\xi^{[n-1-i]}).\]
        Here $\ma(\varphi^{[i]}; \varphi_\xi^{[n-1-i]}) = \ma(\varphi, \cdots, \varphi, \varphi_\xi, \cdots, \varphi_\xi)$, where there are $i$ copies of $\varphi$, and $n-1-i$ copies of $\varphi_\xi$;
        \item $\frac{d}{dt}|_{t=0}E(t\varphi+(1-t)\psi)=\int (\varphi-\psi)\ma(\psi)$;
        \item $\E$ is homogeneous in $\xi$ of degree $-1$;
        \item When $\xi$ is rational, the finite energy class and the Monge--Amp\`ere energy recover the ones in~\cite[Section 6]{BJ18v1} and~\cite[Section 7]{BJ18}.
    \end{enumerate}
\end{cor}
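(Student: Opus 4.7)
The plan is to deduce each item from the corresponding identity already established on $\cH(\xi)$ or $\cpsh(\xi)$ in Sections~\ref{section: cpsh}--\ref{section: EnergyOnPSH}, extending to $\cE^1(\xi)$ via approximation by decreasing nets of FS functions together with the continuity-along-decreasing-nets statements of Theorem~\ref{chap4-thm-mixed-ma}(3) and Proposition~\ref{chap4-prop-ma-energy-psh}(4).

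For (1), when $\varphi\in\cH(\xi)$ the identity is just the defining expansion \eqref{eqn:ma} of $\E(\varphi)=\E(\varphi,\varphi_\xi)$ combined with Proposition~\ref{chap4-prop-ma-properties}(4). For general $\varphi\in\cE^1(\xi)$, Proposition~\ref{prop-psh-fs}(4) supplies a decreasing net $\varphi_j\in\cH(\xi)$ with $\varphi_j\searrow\varphi$. The left-hand side $\E(\varphi_j)\searrow\E(\varphi)$ by Proposition~\ref{chap4-prop-ma-energy-psh}(4), while each summand $\int \varphi_j\,\ma(\varphi_j^{[i]};\varphi_\xi^{[n-1-i]})$ on the right converges to the analogous expression in $\varphi$ by Theorem~\ref{chap4-thm-mixed-ma}(3), applied with $p=i$ arguments in $\cE^1(\xi)$ and $n-1-i$ FS arguments $\varphi_\xi$. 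Taking limits yields the formula.

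For (2), set $\varphi_t:=t\varphi+(1-t)\psi$, which lies in $\cE^1(\xi)$ for $t\in[0,1]$ by convexity, and let $g(t):=\E(\varphi_t)$. Substituting (1) and using multilinearity of mixed MA measures gives
\[
g(t)=\tfrac{1}{n}\sum_{i=0}^{n-1}\int \varphi_t\,\ma(\varphi_t^{[i]};\varphi_\xi^{[n-1-i]}),
\]
which on FS approximants is polynomial in $t$ of degree at most $n$. The first-variation computation in Proposition~\ref{chap4-prop-ma-properties}(3), combined with the integration-by-parts formula of Proposition~\ref{prop-int-by-parts} applied to each cross term, shows that at $t=0$ all contributions telescope to $\int(\varphi-\psi)\ma(\psi)$; this is exactly the FS/cpsh case. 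For general $\varphi,\psi\in\cE^1(\xi)$, approximate by decreasing nets $\varphi_j,\psi_j\in\cH(\xi)$ and pass to the limit. Concavity of $g$ (Proposition~\ref{chap4-prop-ma-energy-psh}(5)) ensures $g'(0)$ exists, and Theorem~\ref{chap4-thm-mixed-ma}(3) guarantees $\int(\varphi_j-\psi_j)\ma(\psi_j)\to\int(\varphi-\psi)\ma(\psi)$.

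For (3), Proposition~\ref{chap4-prop-cpsh-vector-space}(4) gives the bijection $\psi\leftrightarrow \tfrac{1}{a}\psi$ between $\cpsh(\xi)$ and $\cpsh(a\xi)$, and Proposition~\ref{chap4-prop-ma-properties}(8) says $\E_{a\xi}(\tfrac{1}{a}\psi)=\tfrac{1}{a}\E_\xi(\psi)$ on this class. Since $\varphi\le\psi$ iff $\tfrac{1}{a}\varphi\le\tfrac{1}{a}\psi$, the infimum defining $\E$ on $\cE^1(\xi)$ inherits homogeneity of degree $-1$ in $\xi$. For (4), when $\xi$ is rational, Proposition~\ref{prelim-prop-quotient-log-fano} realizes $X$ as the affine cone over a polarized pair $(V,B_V;L)$; the identification of $\cH(\xi)$ with FS metrics on $L$ and of the MA measure with the global MA measure, carried out in~\cite{wu}, shows that the defining procedure (infimum over cpsh majorants) produces the same $\cE^1$ and the same energy as in~\cite[Section~6]{BJ18v1} and~\cite[Section~7]{BJ18}.

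The main obstacle is (2): one must justify that differentiation of $g(t)$ commutes with the net-limit in the approximation $\varphi_j\searrow\varphi,\psi_j\searrow\psi$, which is not automatic from concavity alone. The route above leverages the polynomial structure of $g$ on FS inputs coming from multilinearity of mixed MA measures, so the derivative is rewritten as an integral to which Theorem~\ref{chap4-thm-mixed-ma}(3) applies directly. Parts (1), (3), (4) are comparatively routine extensions by continuity and bookkeeping.
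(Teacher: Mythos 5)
The paper offers no argument for this corollary beyond the phrase ``immediate corollary of the previous theorem,'' and your proposal supplies essentially the intended derivation: everything follows from Theorem~\ref{chap4-thm-mixed-ma} by approximating with decreasing nets in $\cH(\xi)$ and invoking continuity along decreasing nets of both $\E$ and the pairings $\int\psi\,\ma(\cdots)$. Parts (1), (3) and (4) are fine as you present them; in (1) you should just note in passing that the defining expansion~\eqref{eqn:ma} has integrand $\varphi-\varphi_\xi$ while the corollary has $\varphi$, and that these agree because each mixed Monge--Amp\`ere measure is supported on $X_0=\{\varphi_\xi=0\}$.

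The one place that needs tightening is (2), and it is exactly the spot you flag yourself. Convergence of the endpoint derivatives $g_j'(0^+)\to\int(\varphi-\psi)\ma(\psi)$ together with pointwise convergence $g_j\to g$ of concave functions does not by itself give $g'(0^+)=\lim_j g_j'(0^+)$ (for concave nets one only gets $\liminf_j g_j'(0^+)\ge g'(0^+)$ for free), so ``pass to the limit in $j$ of the derivative'' is not a complete step. The clean assembly of the ingredients you already have is to avoid interchanging the derivative with the net limit altogether: for fixed $t\in(0,1]$ the two-sided bound
\[
\int(\varphi-\psi)\,\ma(\varphi_t)\;\le\;\frac{\E(\varphi_t)-\E(\psi)}{t}\;\le\;\int(\varphi-\psi)\,\ma(\psi),\qquad \varphi_t:=t\varphi+(1-t)\psi,
\]
holds for $\cE^1$ inputs, since each inequality is an identity-free consequence of Proposition~\ref{chap4-prop-ma-properties}(2) for FS inputs and passes to the limit along decreasing nets by Theorem~\ref{chap4-thm-mixed-ma}(2)--(3) (this is the same claim used in the proof of Proposition~\ref{chap4-prop-E-duality}). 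Multilinearity of $(\varphi_1,\dots,\varphi_{n-1})\mapsto\ma(\varphi_1,\dots,\varphi_{n-1})$ with respect to convex combinations, which survives the inductive construction in Theorem~\ref{chap4-thm-mixed-ma} by the same limiting argument, makes $t\mapsto\int(\varphi-\psi)\ma(\varphi_t)$ a polynomial, hence continuous at $t=0$ with value $\int(\varphi-\psi)\ma(\psi)$; letting $t\to0^+$ squeezes the difference quotient and yields (2). With that substitution your proof is complete.
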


\subsection{$\I$ and $\J$ functionals on $\cE^1$}\label{subsec: quasimetric on E^1}
To simplify notation, we will write $\ma(\varphi) = \ma(\varphi_1, \cdots, \varphi_{n-1})$ provided $\varphi = \varphi_1=\cdots =\varphi_{n-1}$.
\begin{defn}
    For $\varphi, \psi \in \cE^1(\xi)$, set
        \[\I(\varphi, \psi)\coloneqq \int_{X^{\an}} (\varphi-\psi)(\ma(\psi)-\ma(\varphi)),\]
    and 
    \[\J_{\psi}(\varphi)\coloneqq \int_{X^{\an}} (\varphi-\psi) \ma(\psi) - \E(\varphi, \psi).\]
    We will simply write $\I(\varphi), \J(\varphi)$ if $\psi = \varphi_\xi$.
\end{defn}

As is for continuous psh functions, the $\I$ and $\J$ functionals satisfy the same properties on $\cE^1(\xi)$, which we summarize in the following proposition. In particular, the $\I$ functional defines a quasi-pseudometric on $\cE^1(\xi)$.
\begin{prop}\label{prop-IJ-E1}
     For $\varphi, \varphi', \psi\in \cE^1(\xi)$, we have
    \begin{enumerate}
        \item $\I(\varphi, \psi)=\vol(\xi)^{-1} \sum_{j=0}^{n-2} \|\varphi-\psi\|^2_{(\varphi^j, \psi^{n-2-j})};$
        \item $\J_\psi(\varphi) = \vol(\xi)^{-1} \sum_{j=0}^{n-2}\frac{j+1}{n} \|\varphi-\psi\|^2_{(\varphi^j, \psi^{n-2-j})};$
        \item $\frac{1}{n} \I(\varphi, \psi)\leq \J_\psi(\varphi)\leq \frac{n-1}{n} \I(\varphi,\psi).$
        \item $\I(\varphi, \varphi')\lesssim \I(\varphi, \psi)+\I(\psi, \varphi')$;
        \item Set $\alpha_n = \frac{1}{2^{n-1}}$. We have
    \[\left|\int (\varphi-\varphi')(\ma(\psi)-\ma(\psi'))\right|\lesssim \I(\varphi, \varphi')^{\alpha_n}\I(\psi, \psi')^{\frac 12}\max\{\J(\varphi), \J(\varphi'), \J(\psi), \J(\psi')\}^{\frac 12-\alpha_n}.\]
    \end{enumerate}
\end{prop}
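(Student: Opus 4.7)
The plan is to reduce all five statements to the corresponding identities and inequalities for continuous psh functions proved in Proposition~\ref{prop-IJ-formula}, Theorem~\ref{thm-quasi-triangle-inequality}, Corollary~\ref{cor-norm-holder}, and Proposition~\ref{prop-holder-cpsh}, using the approximation scheme supplied by Proposition~\ref{prop-psh-fs}(4) and the continuity along decreasing nets built into Theorem~\ref{chap4-thm-mixed-ma}. The first task is to make sense of the seminorm $\|\varphi-\psi\|^2_{(\varphi^j,\psi^{n-2-j})}$ on $\cE^1(\xi)$: expanding $d'd''(\varphi-\psi) = d'd''\varphi - d'd''\psi$ inside the defining integral writes it as a finite linear combination of integrals of $\varphi$ and $\psi$ against mixed Monge--Amp\`ere measures whose arguments all lie in $\{\varphi,\psi,\varphi_\xi\}\subset\cE^1(\xi)$. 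Each such integral is finite by Theorem~\ref{chap4-thm-mixed-ma}(2), and therefore defines the seminorm unambiguously on $\cE^1(\xi)$.

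For (1) and (2), I pick decreasing nets $\varphi_i,\psi_i\in\cH(\xi)$ with $\varphi_i\searrow\varphi$ and $\psi_i\searrow\psi$. Proposition~\ref{prop-IJ-formula} gives the two identities at the level of $(\varphi_i,\psi_i)$. Both sides are continuous along these decreasing nets: the left-hand sides of (1) and (2) are linear combinations of pairings of the form $\int\chi\,\ma(\chi_1,\dots,\chi_{n-1})$ with $\chi,\chi_k\in\{\varphi_i,\psi_i,\varphi_\xi\}$, to which Theorem~\ref{chap4-thm-mixed-ma}(3) applies, while $\E(\varphi_i,\psi_i)=\E(\varphi_i)-\E(\psi_i)$ passes to the limit by Proposition~\ref{chap4-prop-ma-energy-psh}(4); the right-hand sides pass to the limit by the same Theorem~\ref{chap4-thm-mixed-ma}(3) applied to the expanded form of the seminorms. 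This establishes (1) and (2), and (3) is then immediate.

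For (4) and (5), I repeat the $\cpsh$-level arguments of Lemma~\ref{lem-norm-holder}, Corollary~\ref{cor-norm-holder}, Theorem~\ref{thm-quasi-triangle-inequality}, and Proposition~\ref{prop-holder-cpsh} verbatim on $\cE^1(\xi)$: they only rely on the identity (1), the bounds (3), the Cauchy--Schwarz inequality (which on $\cE^1(\xi)$ follows from (1)--(3) by polarization of the seminorms, once they are recognized as positive semidefinite), concavity of $\E$, and convexity of the underlying class, all of which are now at our disposal on $\cE^1(\xi)$. The key step of iterating the mixed-norm estimate by induction on $p$ in Lemma~\ref{lem-norm-holder} and then deducing the quasi-triangle inequality by applying it to the midpoint $\tfrac12(\varphi_1+\varphi_2)\in\cE^1(\xi)$ uses only these ingredients; statement (5) then follows by the same Cauchy--Schwarz/H\"older chain as in Proposition~\ref{prop-holder-cpsh}. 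Alternatively, one can approximate all of $\varphi,\varphi',\psi,\psi'$ by decreasing nets in $\cH(\xi)$, apply the corresponding inequalities there, and pass to the limit once more using Theorem~\ref{chap4-thm-mixed-ma}(3).

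The main potential obstacle is checking that the various approximations do not break the seminorm identities: functions in $\cE^1(\xi)$ may take the value $-\infty$ outside the locus of quasi-monomial valuations, so one must be sure that all integrals appearing are integrals against measures supported on $X_0$ where the integrands are finite by Corollary~\ref{chap4-cor-qm-linear-growth}, and that the convergence statements of Theorem~\ref{chap4-thm-mixed-ma}(3) apply to each piece in the expanded seminorm. Once this bookkeeping is done, the rest of the argument is a formal transcription of the $\cpsh$-level proofs.
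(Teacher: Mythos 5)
Your proposal is correct and matches the paper's (implicit) approach: the paper states Proposition~\ref{prop-IJ-E1} without proof, relying on exactly the limiting argument you describe---approximate by decreasing nets in $\cH(\xi)$, invoke Proposition~\ref{prop-IJ-formula}, Theorem~\ref{thm-quasi-triangle-inequality} and Proposition~\ref{prop-holder-cpsh} at that level, and pass to the limit using the continuity along decreasing nets and finiteness of the mixed pairings from Theorem~\ref{chap4-thm-mixed-ma}. Of the two routes you offer for (4)--(5), rely on the approximation route rather than repeating the $\cpsh$-level proofs ``verbatim'' on $\cE^1(\xi)$, since the latter would require current-level statements (e.g.\ $d'd''\varphi\leq 2d'd''\phi$ and positivity of the bilinear form on differences) for non-continuous functions, which the framework only supplies through the limiting process.
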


\begin{cor}
    For $\varphi,\varphi', \psi\in\cE^1(\xi)$, we have
    \[\left|\J_\psi(\varphi)-\J_\psi(\varphi')\right|\lesssim \I(\varphi, \varphi')^{\alpha_n}\max\{\J(\varphi), \J(\varphi'), \J(\psi)\}^{1-\alpha_n},\]
    where $\alpha_n = \frac{1}{2^{n-1}}$.
\end{cor}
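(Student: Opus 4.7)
The plan is to reduce the estimate to the H\"older-type inequality for differences of Monge--Amp\`ere measures (Proposition~\ref{prop-IJ-E1}(5)) by writing $\J_\psi(\varphi) - \J_\psi(\varphi')$ as an integral along the segment $\varphi_t := t\varphi + (1-t)\varphi'$, $t \in [0,1]$, which lies in $\cE^1(\xi)$ by convexity.

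First, from the definition $\J_\psi(\varphi) = \int (\varphi - \psi)\,\ma(\psi) - \E(\varphi,\psi)$ and the identity $\E(\varphi,\psi) = \E(\varphi) - \E(\psi)$ of Proposition~\ref{chap4-prop-ma-properties}(4), one obtains
\[
\J_\psi(\varphi) - \J_\psi(\varphi') = \int (\varphi - \varphi')\,\ma(\psi) - \bigl(\E(\varphi) - \E(\varphi')\bigr).
\]
The endpoint derivative formula (item (2) of the corollary preceding Proposition~\ref{prop-IJ-E1}) gives $\frac{d}{ds}\bigr|_{s=0} \E(s\eta + (1-s)\phi) = \int (\eta - \phi)\,\ma(\phi)$ for $\eta, \phi \in \cE^1(\xi)$. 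Applying this at a base point $\varphi_{t_0}$ along the segment via the reparametrization $\varphi_t = s\varphi + (1-s)\varphi_{t_0}$ with $s = (t-t_0)/(1-t_0)$ yields $\frac{d}{dt}\E(\varphi_t) = \int (\varphi - \varphi')\,\ma(\varphi_t)$ for $t \in [0,1)$, so that $\E(\varphi) - \E(\varphi') = \int_0^1 \int (\varphi - \varphi')\,\ma(\varphi_t)\,dt$ and
\[
\J_\psi(\varphi) - \J_\psi(\varphi') = \int_0^1 \int (\varphi - \varphi')\bigl(\ma(\psi) - \ma(\varphi_t)\bigr)\,dt.
\]

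Next, by Proposition~\ref{prop-IJ-E1}(5) applied for each $t$,
\[
\left|\int (\varphi - \varphi')(\ma(\psi) - \ma(\varphi_t))\right| \lesssim \I(\varphi, \varphi')^{\alpha_n}\, \I(\psi, \varphi_t)^{\frac 12}\, \max\{\J(\varphi), \J(\varphi'), \J(\psi), \J(\varphi_t)\}^{\frac 12 - \alpha_n}.
\]
Since $\E$ is concave on $\cE^1(\xi)$ (Proposition~\ref{chap4-prop-ma-energy-psh}(5)), $\J$ is convex, so $\J(\varphi_t) \leq \max\{\J(\varphi), \J(\varphi')\}$. By the quasi-triangle inequality (Proposition~\ref{prop-IJ-E1}(4)) together with the equivalence $\I(\cdot, \varphi_\xi) \asymp \J(\cdot)$ of item (3),
\[
\I(\psi, \varphi_t) \lesssim \I(\psi, \varphi_\xi) + \I(\varphi_\xi, \varphi_t) \lesssim \J(\psi) + \J(\varphi_t) \lesssim \max\{\J(\varphi), \J(\varphi'), \J(\psi)\}.
\]
Plugging these in gives the uniform-in-$t$ bound $\I(\varphi, \varphi')^{\alpha_n}\max\{\J(\varphi), \J(\varphi'), \J(\psi)\}^{1-\alpha_n}$, and integrating over $t \in [0,1]$ yields the corollary.

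The main obstacle is the first step, justifying the integral representation of $\E(\varphi) - \E(\varphi')$ in $\cE^1(\xi)$: the cited derivative formula is stated only at $t = 0$, and the reparametrization argument above is needed to transport it to an arbitrary base point $\varphi_{t_0}$. To invoke the fundamental theorem of calculus one further needs $t \mapsto \int (\varphi - \varphi')\,\ma(\varphi_t)$ to be integrable on $[0,1]$, which should follow from the continuity of the mixed Monge--Amp\`ere operator along decreasing nets in $\cE^1(\xi)$ (Theorem~\ref{chap4-thm-mixed-ma}(3)) combined with the polynomial expansion $\ma(\varphi_t) = \sum_{j} \binom{n-1}{j} t^j (1-t)^{n-1-j}\, \ma(\varphi^{[j]}; \varphi'^{[n-1-j]})$; alternatively, one can bypass the derivative entirely by directly expanding $\E(\varphi) - \E(\varphi') = \tfrac{1}{n}\sum_j \int (\varphi - \varphi')\,\ma(\varphi^{[j]}; \varphi'^{[n-1-j]})$ and upgrading Proposition~\ref{prop-IJ-E1}(5) to mixed Monge--Amp\`ere measures by a telescoping argument.
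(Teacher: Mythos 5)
Your argument is correct, but it takes a genuinely different route from the paper's. The paper uses the single algebraic decomposition
\[
\J_\psi(\varphi)-\J_\psi(\varphi') \;=\; \J_{\varphi'}(\varphi)+\int(\varphi-\varphi')\bigl(\ma(\psi)-\ma(\varphi')\bigr),
\]
bounds the second term by Proposition~\ref{prop-IJ-E1}(5) exactly as you do (with $\varphi'$ in place of your $\varphi_t$), and then absorbs the leftover term $\J_{\varphi'}(\varphi)\approx\I(\varphi,\varphi')$ into the right-hand side by writing $\I(\varphi,\varphi')=\I(\varphi,\varphi')^{\alpha_n}\,\I(\varphi,\varphi')^{1-\alpha_n}$ and using the quasi-triangle inequality together with $\I(\cdot,\varphi_\xi)\approx\J(\cdot)$ to get $\I(\varphi,\varphi')\lesssim\max\{\J(\varphi),\J(\varphi')\}$. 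Your segment integration $\E(\varphi)-\E(\varphi')=\int_0^1\int(\varphi-\varphi')\,\ma(\varphi_t)\,dt$ eliminates that leftover term at the source, at the cost of justifying the integral representation; as you note, the cleanest justification is to bypass the derivative entirely via $\E(\varphi)-\E(\varphi')=\E(\varphi,\varphi')=\frac 1n\sum_j\int(\varphi-\varphi')\,\ma(\varphi^{[j]};\varphi'^{[n-1-j]})$ (available for $\cE^1$ by decreasing approximation and Theorem~\ref{chap4-thm-mixed-ma}(3)) combined with $\int_0^1\binom{n-1}{j}t^j(1-t)^{n-1-j}\,dt=\frac 1n$ and multilinearity of $\ma$ under convex combinations. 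Your remaining estimates --- convexity of $\J$ to control $\J(\varphi_t)$, and the quasi-triangle inequality to bound $\I(\psi,\varphi_t)\lesssim\J(\psi)+\J(\varphi_t)$ --- are all valid and yield the same exponents. The paper's version is shorter; yours trades the reparametrization/integrability bookkeeping for avoiding the final absorption step, and makes the role of the H\"older-type inequality slightly more transparent. Either way the constants are dimensional, so both arguments are sound.
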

\begin{proof}
    Direct computation shows that
    \[\J_\psi(\varphi)-\J_\psi(\varphi')= \int (\varphi-\varphi')\ma(\psi)-\E(\varphi)+\E(\varphi') = \J_{\varphi'}(\varphi)+\int(\varphi-\varphi')(\ma(\psi)-\ma(\varphi')).\]
    Hence Proposition~\ref{prop-IJ-E1} yields
    \begin{align*}
        \left|\J_\psi(\varphi)-\J_\psi(\varphi')\right|&\lesssim \I(\varphi, \varphi')+\I(\varphi, \varphi')^{\alpha_n}\I(\psi, \varphi')^{\frac 12}\max\{\J(\varphi), \J(\varphi'), \J(\psi)\}^{\frac 12-\alpha_n}\\
        &\lesssim  \I(\varphi,\varphi')^{\alpha_n} [(\I(\varphi)+\I(\varphi'))^{1-\alpha_n}+ (\I(\psi)+\I(\varphi'))^{\frac 12}\max\{\J(\varphi), \J(\varphi'), \J(\psi)\}^{\frac 12-\alpha_n} ]\\
        &\lesssim \I(\varphi, \varphi')^{\alpha_n}\max\{\J(\varphi), \J(\varphi'), \J(\psi)\}^{1-\alpha_n}.
    \end{align*}

\end{proof}

This also yields the following concavity of the Monge--Amp\`ere energy.
\begin{prop}\label{prop:E-strict-concavity}
    For $\varphi, \psi\in \cE^1$ and $t\in [0,1]$, we have
    \[\E(t\varphi+(1-t)(\psi))-t\E(\varphi)-(1-t)\E(\psi)\gtrsim t(1-t)\I(\varphi, \psi).\]
\end{prop}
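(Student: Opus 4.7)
The plan is to first establish the inequality for $\varphi,\psi\in\cpsh(\xi)$ via a direct second-variation computation, then extend to $\cE^1(\xi)$ by approximation along decreasing nets. Set $\varphi_t\coloneqq t\varphi+(1-t)\psi$, which lies in $\cpsh(\xi)$ by Proposition~\ref{chap4-prop-cpsh-vector-space}(5), and define $f(t)\coloneqq \E(\varphi_t)$ for $t\in[0,1]$. Applying the first variation formula of Proposition~\ref{chap4-prop-ma-properties}(3) with base $\varphi_{t_0}\in\cpsh(\xi)$ and perturbation $\varphi-\psi\in\dcpsh_\xi(X)$ yields
\[
f'(t)=\frac{1}{\vol(\xi)}\int_{X^{\an}}(\varphi-\psi)\,(d'd''\varphi_t)^{n-1}\wedge d'd''\varphi_\xi^+,
\]
and a second differentiation (using $\tfrac{d}{dt}(d'd''\varphi_t)^{n-1}=(n-1)(d'd''\varphi_t)^{n-2}\wedge d'd''(\varphi-\psi)$) gives
\[
f''(t)=-\frac{n-1}{\vol(\xi)}\,\|\varphi-\psi\|^2_{(\varphi_t^{n-2})}.
\]

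Since $f\in C^2([0,1])$ and $g(t)\coloneqq f(t)-tf(1)-(1-t)f(0)$ satisfies $g(0)=g(1)=0$ with $g''=f''$, the Green's function representation for $-d^2/dt^2$ on $[0,1]$ with Dirichlet boundary yields
\[
\E(\varphi_t)-t\E(\varphi)-(1-t)\E(\psi)=\frac{n-1}{\vol(\xi)}\int_0^1 G(t,s)\,\|\varphi-\psi\|^2_{(\varphi_s^{n-2})}\,ds,
\]
where $G(t,s)=\min\{s(1-t),t(1-s)\}\ge 0$. Expanding $d'd''\varphi_s = s\,d'd''\varphi+(1-s)\,d'd''\psi$ by multilinearity,
\[
\|\varphi-\psi\|^2_{(\varphi_s^{n-2})}=\sum_{j=0}^{n-2}\binom{n-2}{j}s^j(1-s)^{n-2-j}\,\|\varphi-\psi\|^2_{(\varphi^j,\psi^{n-2-j})}.
\]
Restricting the $s$-integration to $[1/4,3/4]$, the elementary bounds $G(t,s)\ge t(1-t)/4$ and $s^j(1-s)^{n-2-j}\gtrsim 1$ both hold; combined with Proposition~\ref{prop-IJ-formula}(1), which states $\I(\varphi,\psi)=\vol(\xi)^{-1}\sum_j\|\varphi-\psi\|^2_{(\varphi^j,\psi^{n-2-j})}$, this delivers the desired bound $\gtrsim t(1-t)\I(\varphi,\psi)$ for $\varphi,\psi\in\cpsh(\xi)$.

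To extend to $\cE^1(\xi)$, approximate $\varphi,\psi\in\cE^1(\xi)$ by decreasing nets $(\varphi^k),(\psi^k)\subset\cH(\xi)$ using Proposition~\ref{prop-psh-fs}(4). Since convex combinations of decreasing nets remain decreasing, $\varphi^k_t\searrow \varphi_t$, and continuity of $\E$ along decreasing nets (Proposition~\ref{chap4-prop-ma-energy-psh}(4)) handles each term on the left-hand side. For the right-hand side, expand $\I(\varphi^k,\psi^k)=\int(\varphi^k-\psi^k)\bigl(\ma(\psi^k)-\ma(\varphi^k)\bigr)$ into four mixed Monge--Amp\`ere pairings and apply the continuity property of Theorem~\ref{chap4-thm-mixed-ma}(3) to each, giving $\I(\varphi^k,\psi^k)\to \I(\varphi,\psi)$. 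The main technical subtlety is precisely this last step: unlike $\E$, the functional $\I$ is not monotone along monotone nets, so one cannot simply invoke monotone convergence and must treat each of the four pairings separately.
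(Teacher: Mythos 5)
Your argument is correct, but it takes a genuinely different route from the paper. The paper's proof is a three-line affair entirely inside $\cE^1(\xi)$: it applies the inequality $\tfrac1n\I\leq\J$ from Proposition~\ref{prop-IJ-E1}(3) at the midpoint, i.e.\ $\tfrac1n\I(\varphi,\varphi_t)\leq\J_{\varphi_t}(\varphi)$ and $\tfrac1n\I(\varphi_t,\psi)\leq\J_{\varphi_t}(\psi)$, takes the convex combination $t(\cdot)+(1-t)(\cdot)$ so that the terms $\int(\,\cdot\,-\varphi_t)\ma(\varphi_t)$ cancel, and finishes with the quasi-triangle inequality $\I(\varphi,\varphi_t)+\I(\varphi_t,\psi)\gtrsim\I(\varphi,\psi)$. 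Since the $\I$ and $\J$ machinery has already been extended to $\cE^1$, no approximation is needed. Your proof instead works from first principles on $\cpsh(\xi)$: the second-variation formula, the Green's function identity
$g(t)=\frac{n-1}{\vol(\xi)}\int_0^1 G(t,s)\|\varphi-\psi\|^2_{(\varphi_s^{n-2})}\,ds$, the binomial expansion of $(d'd''\varphi_s)^{n-2}$, and the localization to $s\in[1/4,3/4]$ are all sound, and the passage to $\cE^1$ via decreasing nets is handled correctly — in particular you rightly identify that $\I(\varphi^k,\psi^k)\to\I(\varphi,\psi)$ must be checked term-by-term through the four pairings using Theorem~\ref{chap4-thm-mixed-ma}(3), since $\I$ is not monotone. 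What your approach buys is an exact integral representation of the concavity defect with an explicit, transparent constant that does not route through the quasi-triangle inequality (whose implicit constant comes from the H\"older estimates of Lemma~\ref{lem-norm-holder}); what it costs is the extra approximation step and the need to justify differentiating under the integral, both of which the paper's argument sidesteps entirely.
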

\begin{proof}
    Set $\varphi_t\coloneqq t\varphi+(1-t)\psi$. By the Proposition~\ref{prop-IJ-E1}, we have
    \[\frac 1n \I(\varphi, \varphi_t)\leq \int(\varphi-\varphi_t)\ma(\varphi_t)-\E(\varphi)+\E(\varphi_t),\]
    and similarly
    \[\frac 1n \I(\varphi_t, \psi)\leq \int(\psi-\varphi_t)\ma(\psi)+\E(\varphi_t)-\E(\psi).\]
    Combining these together, with~\cite[Lemma 7.29]{BJ18}, we have
    \[\E(\varphi_t)-t\E(\varphi)-(1-t)\E(\psi)\gtrsim t\I(\varphi, \varphi_t)+(1-t)\I(\varphi_t, \psi)\geq t(1-t)(\I(\varphi, \varphi_t)+\I(\varphi_t, \psi))\gtrsim t(1-t)\I(\varphi, \psi),\]
    where the last inequality again follows from Proposition~\ref{prop-IJ-E1}.
\end{proof}


\section{Measures of finite energy}\label{section: FiniteEnergyMeasure}
Finally, we begin the study of measures of finite energy. 
Fix a polarized affine cone $(X; \xi)$. Denote by $\cM = \cM(X_0)$ the space of Radon probability measures on $X_0$.
\begin{defn}
    The energy of $\mu\in \cM$ with respect to the polarization $\xi$ is defined by 
    \[\E^\vee(\mu;\xi)\coloneqq \sup_{\varphi\in \cE^1(\xi)}\left(\E(\varphi)-\int_{X_0}\varphi\mu\right)\in \RR\cup \{+\infty\}.\]
    A measure $\mu\in \cM$ is said to have \emph{finite energy} if $\E^\vee(\mu;\xi)<+\infty$. Denote by $\cM^1(\xi)\subset \cM$ the space of measures of finite energy with polarization $\xi$. 
\end{defn}
By definition, if $\mu$ has finite energy, then $\int \varphi\mu$ is finite for any $\varphi\in \cE^1(\xi)$. We also note that if $\xi$ is rational and primitive, then the above definition agrees with the one in~\cite[Definition 9.1]{BJ18}. In general, it is homogeneous in $\xi$ of degree $-1$. Alternatively, following the strategy in~\cite{BJNASyn}, one can define measures of finite energy using only continuous psh functions, and the results in this section will remain valid on $\cpsh(\xi)$.
\begin{lem}\label{chap4-lem-sup-over-fs}
The $\sup$ in the above definition can also be taken over FS functions:
    \[\E^\vee(\mu;\xi)= \sup_{\varphi\in \cH(\xi)}\left(\E(\varphi)-\int_{X_0}\varphi\mu\right).\]
\end{lem}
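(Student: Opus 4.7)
The inequality $\sup_{\varphi\in\cH(\xi)}(\E(\varphi)-\int_{X_0}\varphi\,\mu)\le \E^\vee(\mu;\xi)$ is immediate, since $\cH(\xi)\subset\cpsh(\xi)\subset\cE^1(\xi)$. The content of the lemma is the reverse inequality, so the plan is, for an arbitrary $\varphi\in\cE^1(\xi)$, to exhibit a net $(\varphi_i)$ in $\cH(\xi)$ along which both $\E(\varphi_i)\to \E(\varphi)$ and $\int_{X_0}\varphi_i\,\mu\to\int_{X_0}\varphi\,\mu$, so that the pairing $\E(\varphi_i)-\int_{X_0}\varphi_i\,\mu$ realizes the value $\E(\varphi)-\int_{X_0}\varphi\,\mu$ in the limit.

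The natural choice is to invoke Proposition~\ref{prop-psh-fs}(4), which produces a decreasing net $(\varphi_i)\subset\cH(\xi)$ with $\varphi_i\searrow\varphi$ pointwise on $X^{\an}$. Since $\E$ is monotone and continuous along decreasing nets by Proposition~\ref{chap4-prop-ma-energy-psh}(4), we automatically have $\E(\varphi_i)\searrow \E(\varphi)>-\infty$. For the measure term, each $\varphi_i$ is continuous on the compact set $X_0$ and hence bounded, while $\varphi$ is usc on $X_0$ (as a decreasing pointwise limit of continuous functions) and bounded above; so all integrals $\int_{X_0}\varphi_i\,\mu$ are finite and the limit $\int_{X_0}\varphi\,\mu\in[-\infty,+\infty)$ is well-defined. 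Monotone convergence for the decreasing net then gives $\int_{X_0}\varphi_i\,\mu\searrow\int_{X_0}\varphi\,\mu$, and combining the two convergences shows that $\E(\varphi)-\int_{X_0}\varphi\,\mu$ is a limit of quantities of the form $\E(\varphi_i)-\int_{X_0}\varphi_i\,\mu$ with $\varphi_i\in\cH(\xi)$. Taking sup over $\varphi\in\cE^1(\xi)$ yields $\E^\vee(\mu;\xi)\le\sup_{\varphi\in\cH(\xi)}(\E(\varphi)-\int_{X_0}\varphi\,\mu)$.

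The main delicate step is the monotone convergence $\int_{X_0}\varphi_i\,\mu\to\int_{X_0}\varphi\,\mu$: a decreasing net of bounded functions against a finite Radon measure need not satisfy the monotone convergence theorem without extra argument. The cleanest way is to observe that since $\mu$ is Radon on the compact Hausdorff space $X_0$ and $\varphi$ is usc and bounded above,
\[\int_{X_0}\varphi\,\mu \;=\;\inf\Bigl\{\int_{X_0}\psi\,\mu \;\Bigm|\; \psi\in C(X_0),\ \psi\ge\varphi\Bigr\},\]
so it suffices to show that for any such $\psi$ and $\varepsilon>0$ we eventually have $\int_{X_0}\varphi_i\,\mu\le\int_{X_0}\psi\,\mu+\varepsilon$. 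Writing $\varphi_i-\psi$ as a decreasing net of continuous functions with pointwise infimum $\le 0$, this is a standard Dini-type fact for Radon measures; alternatively, since the construction of approximating nets in~\cite[Lemma 4.6]{BJ18} (cited in the proof of Proposition~\ref{prop-psh-fs}) can be performed so that the indexing set is countable, one may reduce to the classical sequential monotone convergence theorem. Either route establishes the claim and closes the proof.
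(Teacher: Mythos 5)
Your proof is correct and follows essentially the same route as the paper: approximate $\varphi\in\cE^1(\xi)$ by a decreasing net in $\cH(\xi)$, use monotonicity/continuity of $\E$ along decreasing nets, and pass to the limit in $\int\varphi_i\,\mu$ via monotone convergence for decreasing nets of usc functions against a Radon measure (the paper simply cites \cite[Lemma 7.17]{BJ18} for this last step, which is exactly the Dini-type compactness argument you spell out; your ``alternative'' reduction to countable index sets is the only shaky aside, but it is not needed).
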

\begin{proof}
    The direction $\geq$ is clear. For any given $\varphi\in \cE^1(\xi)$, pick a decreasing net $\varphi_j\to \varphi$ in $\cH(\xi)$. Then Theorem~\ref{chap4-thm-mixed-ma} implies that 
    \[\E(\varphi_j)-\int\varphi_j\mu\geq \left(\E(\varphi)-\int\varphi\mu\right) + \int (\varphi-\varphi_j)\mu\]
    for all $j$. We are done by letting $j\to \infty$ and the monotone convergence theorem (see e.g.~\cite[Lemma 7.17]{BJ18}).
\end{proof}
\begin{prop}\label{chap4-prop-E-duality}
    For any $\varphi\in \cE^1(\xi)$, 
    \[\E(\varphi)= \inf_{\mu\in \cM^1(\xi)}\left(\E^\vee(\mu)+\int \varphi\mu\right).\]
\end{prop}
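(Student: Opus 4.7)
The direction $\E(\varphi) \le \inf_\mu\bigl(\E^\vee(\mu) + \int \varphi\,\mu\bigr)$ is immediate from the definition of $\E^\vee$ applied to the test function $\varphi$ itself: for every $\mu \in \cM^1(\xi)$, $\E^\vee(\mu) + \int \varphi\,\mu \ge \E(\varphi)$. My plan for the reverse inequality is to exhibit $\mu \coloneqq \ma(\varphi) \in \cM(X_0)$ as a minimizer, motivated by the fact that $\ma(\varphi)$ is the first variation of $\E$ at $\varphi$ and the formula is a Legendre-type self-duality.

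The key step is the concavity--derivative inequality
\[
\E(\psi) - \E(\varphi) \;\le\; \int_{X_0}(\psi - \varphi)\,\ma(\varphi) \quad \text{for every } \psi \in \cE^1(\xi).
\]
I would derive this from two ingredients already in place: the scalar function $g(t) \coloneqq \E\bigl((1-t)\varphi + t\psi\bigr)$ is a real-valued concave function on $[0,1]$ (using convexity of $\cE^1(\xi)$ and concavity of $\E$ from Proposition~\ref{chap4-prop-ma-energy-psh}(5)), and its right derivative at $0$ equals $\int(\psi - \varphi)\,\ma(\varphi)$ by the first-variation formula in the corollary following Theorem~\ref{chap4-thm-mixed-ma}. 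Concavity then yields $g(1) - g(0) \le g'(0^+)$, which is exactly the claimed inequality. Rearranging and taking the supremum over $\psi \in \cE^1(\xi)$ gives $\E^\vee\bigl(\ma(\varphi)\bigr) \le \E(\varphi) - \int \varphi\,\ma(\varphi)$.

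To finish, I would verify that $\int \varphi\,\ma(\varphi)$ is finite: upper-boundedness of $\varphi$ on the compact set $X_0$ (as a decreasing limit of continuous functions) gives finiteness from above, while Theorem~\ref{chap4-thm-mixed-ma}(2) applied with $\varphi_1 = \cdots = \varphi_{n-1} = \varphi$, together with the observation that $\varphi_\xi \equiv 0$ on the support $X_0 = \{v(\fm) = 0\}$ of $\ma(\varphi)$, gives finiteness from below. Since $\E(\varphi)$ is also finite on $\cE^1(\xi)$, this simultaneously shows $\ma(\varphi) \in \cM^1(\xi)$ and closes the chain
\[
\inf_{\mu \in \cM^1(\xi)} \Bigl(\E^\vee(\mu) + \int \varphi\,\mu\Bigr) \;\le\; \E^\vee\bigl(\ma(\varphi)\bigr) + \int \varphi\,\ma(\varphi) \;\le\; \E(\varphi).
\]
The only nontrivial input behind the plan is the first-variation formula for $\E$ along the whole segment $(1-t)\varphi + t\psi$ with both endpoints in $\cE^1(\xi)$; once this is granted, the rest is formal, amounting to the first-order optimality condition at $\mu = \ma(\varphi)$ for the Legendre-type transform $\E^\vee$, and no further approximation is needed.
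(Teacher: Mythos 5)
Your proposal is correct and follows essentially the same route as the paper: both directions hinge on realizing the infimum at $\mu=\ma(\varphi)$ via the gradient inequality $\E(\psi)-\E(\varphi)\le\int(\psi-\varphi)\,\ma(\varphi)$ for all $\psi\in\cE^1(\xi)$, which gives $\E^\vee(\ma(\varphi))=\E(\varphi)-\int\varphi\,\ma(\varphi)$. The only (inessential) difference is that you obtain this inequality from concavity of $t\mapsto\E((1-t)\varphi+t\psi)$ together with the first-variation formula, whereas the paper deduces it from the monotonicity of the terms in the energy expansion for Fubini--Study functions followed by a decreasing-limit argument; both inputs are already established in the text.
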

\begin{proof}
    By definition, if $\mu \in \cM^1(\xi)$, then
    \[\E(\varphi)\leq \E^\vee(\mu)+ \int \varphi\mu.\]
    For the other direction, we first claim that for any $\psi\in \cE^1(\xi)$, 
    \[\E(\varphi)-\E(\psi)\leq \int (\varphi-\psi) \ma(\varphi).\]
    Indeed, if $\varphi, \psi$ are both FS functions, then this is a direct consequence of (2) and (4) in Proposition~\ref{chap4-prop-ma-properties}. In general, $\varphi, \psi$ are decreasing limits of FS functions, and the claim follows by Theorem~\ref{chap4-thm-mixed-ma}.
    With the claim, we have
    \[\E^\vee(\ma(\varphi); \xi) = \sup_{\psi\in \cE^1(\xi))}\left(\E(\psi)-\int\psi\ma(\varphi)\right) = \E(\varphi)-\int\varphi\ma(\varphi).\]
    This proves the equality.
\end{proof}

\begin{prop}\label{chap4-prop-energy-S-duality}
    Let $v$ be a $\TT$-invariant quasi-monomial valuation in $X_0$. Then $v\in \cM^1(\xi)$, and 
    \[\E^\vee(\delta_v;\xi)\leq \frac{n+1}{n}S(v;\xi),\]
    where $\delta_v$ is the dirac mass at $v$.
\end{prop}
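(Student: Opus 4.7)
The plan has three steps. By Lemma~\ref{chap4-lem-sup-over-fs} the supremum defining $\E^\vee(\delta_v;\xi)$ can be taken over $\cH(\xi)$, so the task reduces to proving
\[
\E(\varphi) - \varphi(v) \leq \tfrac{n+1}{n}\,S(v;\xi) \qquad \text{for every } \varphi \in \cH(\xi).
\]
Both sides are invariant under $\varphi \mapsto \varphi+c$ by Proposition~\ref{chap4-prop-ma-properties}(7) and (homogeneously in $\xi$) continuous in $\xi$, combining Lemma~\ref{lem-UnifConv} with Definition-Proposition~\ref{chap3-prop-T-formula}. I would therefore first reduce to $\xi$ rational, in which case $X$ is the affine cone over a polarized pair by Proposition~\ref{prelim-prop-quotient-log-fano} and the ordinary graded-algebra tools become available.

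In the rational case I would attach to each FS function $\varphi \in \cH(\xi)$ its $\TT$-invariant linearly bounded filtration $\cF_\varphi$ on $R$. From the explicit form $\varphi = \max_j (\log|f_j|+\lambda_j)/\langle\xi,\alpha_j\rangle$ one reads off directly the lower bound
\[
\varphi(v) \;\geq\; -T(v,\cF_\varphi;\xi) \coloneqq -\sup\bigl\{\tfrac{v(f_\alpha)-\lambda}{\langle\xi,\alpha\rangle}: \alpha\neq 0,\ f_\alpha \in \cF_\varphi^\lambda R_\alpha\bigr\},
\]
and unwinding the Chambert--Loir--Ducros construction of $\ma(\varphi)$ from~\cite{wu} should identify $\E(\varphi)$ with the first moment of the Duistermaat--Heckman measure of $\cF_\varphi$ on $R_m = \bigoplus_{\langle \xi,\alpha\rangle\leq m}R_\alpha$. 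Granting this, the problem becomes the purely filtration-theoretic estimate
\[
E(\cF;\xi) + T(v,\cF;\xi) \;\leq\; \tfrac{n+1}{n}\,S(v;\xi)
\]
for every linearly bounded $\TT$-invariant filtration $\cF$. This is the local analog of the computation in~\cite[\S9]{BJ18}; I would prove it by a Fubini-type rearrangement on the jumping numbers, writing both $E(\cF;\xi)+T(v,\cF;\xi)$ and $S(v;\xi)$ as limits of the same normalized sum $(m\dim R_m)^{-1}\sum a_{m,j}$ against the two filtrations $\cF$ and $\cF_v$, and exploiting that the extremal value is realized by $\cF = \cF_v$ itself.

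The main obstacle is the identification $\E(\varphi) = E(\cF_\varphi;\xi)$. In the projective setting this is essentially the content of~\cite[Example 3.9]{BJ18v1} and is transparent; here one must match the analytic definition of $\ma(\varphi)$ on the punctured cone (with its truncation $\varphi_\xi^+$) with the graded-algebra data, keeping track of the fact that the measure is supported on $X_0$ and that the Reeb grading interacts with $\cF_\varphi$. I expect the identification to follow by a slicing argument along the Reeb flow, but this reconciliation is the technical heart of the proof; once it is in hand, the filtration inequality proceeds as in~\cite{BJ18} and the passage from rational to irrational $\xi$ is a routine continuity argument.
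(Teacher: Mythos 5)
Your reduction steps match the paper's in outline: the paper also invokes Lemma~\ref{chap4-lem-sup-over-fs} (to get lower semicontinuity of $\E^\vee(\delta_v;\cdot)$ in $\xi$), approximates $\xi$ by rational $\xi_j$, and uses continuity of $S(v;\cdot)$ to pass to the limit. But in the rational case the paper does \emph{not} re-derive the inequality: it simply quotes~\cite[Theorem 7.22]{BJNA1}, which for the polarized pair of Proposition~\ref{prelim-prop-quotient-log-fano} gives the exact identity $\E^\vee(\delta_v;\xi_j)=\frac{n+1}{n}S(v;\xi_j)$. The membership $\delta_v\in\cM^1(\xi)$ is also handled by a softer, separate argument: since $\E(\varphi)\le\sup_{X_0}\varphi$, one has $\E^\vee(\delta_v;\xi)\le\sup_{\varphi\in\psh(\xi)}\bigl(\sup_{X_0}\varphi-\varphi(v)\bigr)=T(v;\xi)$ by Corollary~\ref{chap4-cor-T-psh-duality}, which is finite by Corollary~\ref{chap4-cor-qm-linear-growth}.

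The genuine gap is in your filtration-theoretic endgame, and it is not merely a matter of missing details: the ``purely filtration-theoretic estimate'' you reduce to is false. The bound $\varphi(v)\ge -T(v,\cF_\varphi;\xi)$ replaces $\varphi(v)=\max_j\bigl(-v(f_j)+\lambda_j\bigr)/\langle\xi,\alpha_j\rangle$, a maximum over the finitely many generators, by minus a supremum over the entire filtration, and this loss is fatal. Test it on the trivial filtration: there $\varphi=\varphi_\xi$, $E(\cF;\xi)=0$ and $T(v,\cF;\xi)=T(v;\xi)$, so your inequality would assert $T(v;\xi)\le\frac{n+1}{n}S(v;\xi)$. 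But in general only a Fujita-type bound $S(v;\xi)\gtrsim T(v;\xi)/(n+1)$ holds, and the ratio $T/S$ genuinely approaches $n+1$ (the volume function of $\cF_v$ can be essentially $(1-t/T)^n$, e.g.\ for the order of vanishing along a divisor in the polarization class), so the claim fails for every $n\ge 2$. The correct extremal computation must retain $\varphi(v)$ itself and identify the optimizing $\varphi$ as the one attached to the filtration $\cF_v$ of $v$ --- which is precisely the content of the cited theorem. In addition, the identification $\E(\varphi)=E(\cF_\varphi;\xi)$ that you describe as the technical heart is left as an expectation rather than proved. Both issues disappear if, after reducing to rational $\xi$, you follow the paper and invoke the known global result directly.
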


\begin{proof}
    We have 
    \begin{align*}
        \E^\vee(\delta_v;\xi) &= \sup_{\varphi\in \cE^1(\xi)}\left(\E(\varphi)-\varphi(v)\right)\leq \sup_{\varphi\in \cE^1(\xi)}\left(\sup_{X_0}\varphi-\varphi(v)\right)\\
        &\leq\sup_{\varphi\in \psh(\xi)}\left(\sup_{X_0}\varphi-\varphi(v)\right)=T(v,\xi),
    \end{align*}
    where the last equality follows from Corollary~\ref{chap4-cor-T-psh-duality}. Since $v$ has linear growth as shown in Corollary~\ref{chap4-cor-qm-linear-growth}, we have $v\in \cM^1(\xi)$.
    To prove the second assertion, pick $\xi_j\to \xi$ with $\xi_j$ rational. Then~\cite[Theorem 7.22]{BJNA1} in our notation reads
    \[E^\vee(\delta_v;\xi_j) = \frac{n+1}{n}S(v;\xi_j)\]
    for all $j$. By Lemma~\ref{chap4-lem-sup-over-fs}, $E^\vee(\delta_v; \cdot)$ is lower-semicontinuous in $\xi$. So the proof is complete by letting $j\to \infty$.
\end{proof}

\subsection{$\I$ and $\J$ functionals on $\cM^1$}
In a similar manner as in Section~\ref{subsec: quasimetric on E^1}, one can dually define $\I^\vee$ and $\J^\vee$ functionals on $\cM^1(\xi)$. The duality characterization below will finish the proof of Theorem~\ref{main-thm-pluripotential}.
\begin{defn}
    For any $\mu\in\cM^1(\xi)$, we define $\J_\mu: \cE^1(\xi)\to [0, \infty)$ by
    \[\J_\mu(\varphi)\coloneqq \E^\vee(\mu)-\E(\varphi)+\int \varphi\mu, \]
    and for any two $\mu, \mu'\in \cM^1(\xi)$ we define
    \[\I^\vee(\mu, \mu')\coloneqq \inf_{\varphi\in \cE^1}(\J_\mu(\varphi)+\J_{\mu'}(\varphi)).\]
\end{defn}

\begin{lem}
    For all $\varphi, \varphi'\in \cE^1(\xi)$, we have
    \[\I(\varphi, \varphi')\approx \inf_{\mu\in\cM^1}(\J_\mu(\varphi)+\J_\mu(\varphi').\]
    In particular, for all $\varphi\in \cE^1$ and $\mu\in \cM^1(\xi)$,
    \[\J(\varphi)\lesssim \J_\mu(\varphi)+\E^\vee(\mu).\]
\end{lem}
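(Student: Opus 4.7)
The plan is to prove the two inequalities $\I(\varphi,\varphi')\lesssim \inf_\mu(\J_\mu(\varphi)+\J_\mu(\varphi'))$ and $\inf_\mu(\J_\mu(\varphi)+\J_\mu(\varphi'))\lesssim \I(\varphi,\varphi')$ separately, and then derive the ``in particular'' statement by specializing $\varphi'=\varphi_\xi$.

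For the easy direction, I would test the infimum against the specific measure $\mu = \ma(\varphi')$. The identity $\E^\vee(\ma(\psi)) = \E(\psi)-\int \psi\,\ma(\psi)$ established in the proof of Proposition~\ref{chap4-prop-E-duality} shows that $\J_{\ma(\psi)}(\varphi) = \J_\psi(\varphi)$ for any $\psi,\varphi\in\cE^1(\xi)$. Thus taking $\mu=\ma(\varphi')$ gives $\J_\mu(\varphi')=\J_{\varphi'}(\varphi')=0$ and $\J_\mu(\varphi)=\J_{\varphi'}(\varphi)\le \tfrac{n-1}{n}\I(\varphi,\varphi')$ by Proposition~\ref{prop-IJ-E1}(3), which yields $\inf_{\mu\in\cM^1}(\J_\mu(\varphi)+\J_\mu(\varphi'))\lesssim \I(\varphi,\varphi')$.

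For the reverse inequality, the main tool is the quantitative concavity of $\E$ from Proposition~\ref{prop:E-strict-concavity}. Set $\varphi_{1/2}\coloneqq \tfrac12(\varphi+\varphi')\in\cE^1(\xi)$. Then Proposition~\ref{prop:E-strict-concavity} with $t=1/2$ gives
\[
\I(\varphi,\varphi')\;\lesssim\; \E(\varphi_{1/2})-\tfrac12\E(\varphi)-\tfrac12\E(\varphi').
\]
On the other hand, for any $\mu\in\cM^1(\xi)$, the definition of $\E^\vee$ gives $\E(\varphi_{1/2})\le \E^\vee(\mu)+\int \varphi_{1/2}\,\mu$, and splitting $\int \varphi_{1/2}\,\mu=\tfrac12\int\varphi\,\mu+\tfrac12\int\varphi'\,\mu$ yields
\[
\E(\varphi_{1/2})-\tfrac12\E(\varphi)-\tfrac12\E(\varphi')\;\le\; \tfrac12\bigl(\J_\mu(\varphi)+\J_\mu(\varphi')\bigr).
\]
Combining these inequalities gives $\I(\varphi,\varphi')\lesssim \J_\mu(\varphi)+\J_\mu(\varphi')$ for every $\mu\in\cM^1(\xi)$, as required.

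For the final assertion, specialize to $\varphi'=\varphi_\xi$. Since the reference function $\varphi_\xi$ vanishes identically on $X_0$ (the defining condition being $v(\fm)=0$) and every measure $\mu\in\cM^1(\xi)$ is supported on $X_0$, we have $\int \varphi_\xi\,\mu=0$; together with $\E(\varphi_\xi)=0$ this gives $\J_\mu(\varphi_\xi)=\E^\vee(\mu)$. Finally, the inequality $\J(\varphi)=\J_{\varphi_\xi}(\varphi)\le \tfrac{n-1}{n}\I(\varphi,\varphi_\xi)$ from Proposition~\ref{prop-IJ-E1}(3), combined with the quasi-triangle type bound just proved, gives $\J(\varphi)\lesssim \J_\mu(\varphi)+\E^\vee(\mu)$.

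I do not anticipate a real obstacle: the whole argument is a clean dualization of the concavity estimate for $\E$ on $\cE^1(\xi)$, and every ingredient (the strict concavity of $\E$, the sandwich $\tfrac1n\I\le \J_\psi\le \tfrac{n-1}{n}\I$, and the identification $\J_{\ma(\psi)}=\J_\psi$) is already available. The only mild point of care is the bookkeeping of the normalizations $\E(\varphi_\xi)=0$ and $\varphi_\xi|_{X_0}\equiv 0$ when deriving the ``in particular'' statement.
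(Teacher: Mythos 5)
Your proposal is correct and follows essentially the same route as the paper: test the infimum against a Monge--Amp\`ere measure of one of the two potentials for the upper bound (using $\J_{\ma(\psi)}=\J_\psi$), and use the quantitative concavity of $\E$ from Proposition~\ref{prop:E-strict-concavity} at the midpoint $\tfrac12(\varphi+\varphi')$ together with the defining inequality for $\E^\vee$ for the lower bound. Your explicit bookkeeping of $\E(\varphi_\xi)=0$ and $\varphi_\xi|_{X_0}\equiv 0$ in the ``in particular'' step is a harmless elaboration of the paper's one-line remark.
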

\begin{proof}
    First note that 
    \[\inf_{\mu\in\cM^1(\xi)}(\J_\mu(\varphi)+\J_\mu(\varphi'))\leq \J_{\ma(\varphi)}(\varphi)+\J_{\ma(\varphi)}(\varphi')=\J_\varphi(\varphi')\approx \I(\varphi, \varphi'),\]
    by Proposition~\ref{prop-IJ-E1}. On the other hand, for $\tau=\frac{\varphi+\varphi'}{2}$, by Proposition~\ref{prop:E-strict-concavity} we have
    \[\E^\vee(\mu)\geq \E(\tau)-\int \tau\mu\geq \frac 12(\E(\varphi)+\E(\varphi'))-\frac 12\int(\varphi+\varphi')\mu+C(n)\I(\varphi, \varphi')\]
    for some dimensional constant $C(n)$. Hence
    \[\inf_{\mu\in\cM^1}(\J_\mu(\varphi)+\J_\mu(\varphi'))\gtrsim \I(\varphi, \varphi').\]
    The second claim follows by taking $\varphi'=\varphi_\xi$.
\end{proof}

\begin{lem}
    For $\varphi, \varphi'\in \cE^1(\xi)$ and $\mu=\ma(\varphi), \mu'=\ma(\varphi')$, we have
    \[\I(\varphi,\varphi')\approx \I^\vee(\mu, \mu').\]
\end{lem}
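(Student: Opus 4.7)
The plan is to reduce the lemma to a direct comparison of the dual functional $\J_\mu$ with the primal functional $\J_\varphi$ in the special case $\mu = \ma(\varphi)$, and then invoke the inequalities already established in Proposition~\ref{prop-IJ-E1}. The key starting identity is
\[
\E^\vee(\ma(\varphi);\xi) = \E(\varphi) - \int \varphi\,\ma(\varphi),
\]
which was recorded in the proof of Proposition~\ref{chap4-prop-E-duality}. Substituting into the definition of $\J_\mu$ gives, for every $\psi \in \cE^1(\xi)$,
\[
\J_{\ma(\varphi)}(\psi) = \E(\varphi) - \E(\psi) + \int (\psi - \varphi)\,\ma(\varphi) = \J_\varphi(\psi),
\]
and analogously $\J_{\ma(\varphi')}(\psi) = \J_{\varphi'}(\psi)$. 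Once this identification is in place, both inequalities become formal.

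For the upper bound $\I^\vee(\mu, \mu') \lesssim \I(\varphi, \varphi')$, I would test the infimum in the definition of $\I^\vee$ at the specific choice $\psi = \varphi$. Then $\J_\mu(\varphi) = \J_\varphi(\varphi) = 0$, while $\J_{\mu'}(\varphi) = \J_{\varphi'}(\varphi) \leq \tfrac{n-1}{n}\I(\varphi, \varphi')$ by Proposition~\ref{prop-IJ-E1}(3). This gives the desired estimate with an explicit dimensional constant.

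For the lower bound $\I(\varphi, \varphi') \lesssim \I^\vee(\mu, \mu')$, take an arbitrary $\psi \in \cE^1(\xi)$ and use the identity together with Proposition~\ref{prop-IJ-E1}(3) (and symmetry of $\I$) to obtain
\[
\J_\mu(\psi) + \J_{\mu'}(\psi) \;=\; \J_\varphi(\psi) + \J_{\varphi'}(\psi) \;\geq\; \tfrac{1}{n}\bigl(\I(\varphi, \psi) + \I(\psi, \varphi')\bigr).
\]
The quasi-triangle inequality in Proposition~\ref{prop-IJ-E1}(4) bounds the right-hand side below by a dimensional multiple of $\I(\varphi, \varphi')$, uniformly in $\psi$. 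Taking the infimum over $\psi \in \cE^1(\xi)$ concludes.

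There is no serious obstacle at this stage: the substantive work, namely the duality identity $\E^\vee(\ma(\varphi)) = \E(\varphi) - \int \varphi\,\ma(\varphi)$, the comparison $\J_\psi \asymp \I$, and the quasi-triangle inequality, has all been done in the preceding sections. What is mildly subtle, and what I would double-check carefully, is that the identity $\J_{\ma(\varphi)}(\psi) = \J_\varphi(\psi)$ holds for every $\psi \in \cE^1(\xi)$ (not just psh-approachable test functions), since $\J_\mu$ is defined via a supremum over $\cE^1(\xi)$; but this is immediate from the finiteness of $\int\psi\,\ma(\varphi)$ provided by Theorem~\ref{chap4-thm-mixed-ma}.
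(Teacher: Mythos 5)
Your proposal is correct and follows essentially the same route as the paper: the paper's proof also unwinds the definition via the identity $\J_{\ma(\varphi)}(\psi)=\J_\varphi(\psi)$ (coming from $\E^\vee(\ma(\varphi))=\E(\varphi)-\int\varphi\,\ma(\varphi)$) and then applies Proposition~\ref{prop-IJ-E1} together with the quasi-triangle inequality to compare $\inf_\psi(\I(\varphi,\psi)+\I(\varphi',\psi))$ with $\I(\varphi,\varphi')$. Your write-up simply makes explicit the two one-sided estimates (testing at $\psi=\varphi$ for the upper bound, quasi-triangle for the lower) that the paper compresses into a single line.
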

\begin{proof}
    This follows from unwinding definitions and Proposition~\ref{prop-IJ-E1}:
    \[\I^\vee(\mu, \mu')=\inf_{\psi\in \cE^1} (\J_\varphi(\psi)+\J_{\varphi'}(\psi))\approx \inf_{\psi\in \cE^1} (\I(\varphi, \psi)+\I(\varphi', \psi))\approx \I(\varphi, \varphi'). \]
\end{proof}

As a direct consequence, we have
\begin{cor}
     The Monge--Amp\`ere operator defines a bi-Lipschitz map 
     \[\ma: (\cE^1(\xi), \I)\to (\cM^1(\xi), \I^\vee).\]
\end{cor}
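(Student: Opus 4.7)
The plan is to deduce the corollary almost immediately from the preceding lemma, with a single preliminary verification. The substantive inequality has already been done: the lemma establishes
\[
\I(\varphi, \varphi') \approx \I^\vee(\ma(\varphi), \ma(\varphi'))
\qquad \text{for all } \varphi, \varphi' \in \cE^1(\xi),
\]
which by the convention for $\approx$ fixed in the notation section means that there exist dimensional constants $c_1, c_2 > 0$ such that
\[
c_1\,\I(\varphi, \varphi') \;\leq\; \I^\vee(\ma(\varphi), \ma(\varphi')) \;\leq\; c_2\,\I(\varphi, \varphi').
\]
In the setting of quasi-pseudometrics this is precisely the bi-Lipschitz property of the assignment $\varphi \mapsto \ma(\varphi)$.

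Before invoking this, I would verify that the map $\ma: \cE^1(\xi) \to \cM^1(\xi)$ is actually well-defined, i.e.\ that $\E^\vee(\ma(\varphi); \xi) < +\infty$ whenever $\varphi \in \cE^1(\xi)$. This is built into the proof of Proposition~\ref{chap4-prop-E-duality}, which gives the identity
\[
\E^\vee(\ma(\varphi); \xi) \;=\; \E(\varphi) - \int_{X^{\an}} \varphi\,\ma(\varphi).
\]
The first term is finite by the definition of $\cE^1(\xi)$, and the second is finite by Theorem~\ref{chap4-thm-mixed-ma}(2) applied with $\psi = \varphi_1 = \cdots = \varphi_{n-1} = \varphi$. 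Hence $\ma(\varphi) \in \cM^1(\xi)$.

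With $\ma$ well-defined as a map $\cE^1(\xi) \to \cM^1(\xi)$, the previous lemma delivers the bi-Lipschitz bound in both directions, completing the proof. There is no genuine obstacle in this step: the real work is absorbed into the previous two lemmas, which reduce the dual comparison to the primal one via a judicious choice of test function (taking $\mu = \ma(\varphi)$ in one direction and using strict concavity of $\E$ from Proposition~\ref{prop:E-strict-concavity} together with $\tau = (\varphi+\varphi')/2$ in the other).
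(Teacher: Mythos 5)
Your proposal is correct and matches the paper's treatment: the paper also derives the corollary as an immediate consequence of the preceding lemma, which already encodes the two-sided Lipschitz estimate $\I(\varphi,\varphi')\approx \I^\vee(\ma(\varphi),\ma(\varphi'))$. Your extra verification that $\ma(\varphi)\in\cM^1(\xi)$ via the identity $\E^\vee(\ma(\varphi);\xi)=\E(\varphi)-\int\varphi\,\ma(\varphi)$ from Proposition~\ref{chap4-prop-E-duality} is a sensible (and correct) piece of bookkeeping that the paper leaves implicit.
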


\bibliographystyle{alpha}
\bibliography{bibliography}

\newcommand{\etalchar}[1]{$^{#1}$}
\begin{thebibliography}{BKMS15}

\bibitem[AH06]{AH06}
Klaus Altmann and J\"{u}rgen Hausen.
\newblock Polyhedral divisors and algebraic torus actions.
\newblock {\em Math. Ann.}, 334(3):557--607, 2006.

\bibitem[AHS08]{AHS08}
Klaus Altmann, J\"{u}rgen Hausen, and Hendrik S\"{u}ss.
\newblock Gluing affine torus actions via divisorial fans.
\newblock {\em Transform. Groups}, 13(2):215--242, 2008.

\bibitem[AIP{\etalchar{+}}12]{AIPSV}
Klaus Altmann, Nathan~Owen Ilten, Lars Petersen, Hendrik S\"{u}\ss, and Robert Vollmert.
\newblock The geometry of {$T$}-varieties.
\newblock In {\em Contributions to algebraic geometry}, EMS Ser. Congr. Rep., pages 17--69. Eur. Math. Soc., Z\"{u}rich, 2012.

\bibitem[BBE{\etalchar{+}}19]{BBEGZ}
Robert~J Berman, Sebastien Boucksom, Philippe Eyssidieux, Vincent Guedj, and Ahmed Zeriahi.
\newblock K{\"a}hler--{E}instein metrics and the {K}{\"a}hler--{R}icci flow on log {F}ano varieties.
\newblock {\em J. Reine Angew. Math. (Crelles Journal)}, 2019:27--89, 2019.

\bibitem[BBGZ13]{BBGZ}
Robert~J. Berman, S\'{e}bastien Boucksom, Vincent Guedj, and Ahmed Zeriahi.
\newblock A variational approach to complex {M}onge-{A}mp\`ere equations.
\newblock {\em Publ. Math. Inst. Hautes \'{E}tudes Sci.}, 117:179--245, 2013.

\bibitem[BBJ21]{BBJ}
Robert~J. Berman, S\'{e}bastien Boucksom, and Mattias Jonsson.
\newblock A variational approach to the {Y}au-{T}ian-{D}onaldson conjecture.
\newblock {\em J. Amer. Math. Soc.}, 34(3):605--652, 2021.

\bibitem[BFJ15]{BFJ15}
S\'{e}bastien Boucksom, Charles Favre, and Mattias Jonsson.
\newblock Solution to a non-{A}rchimedean {M}onge-{A}mp\`ere equation.
\newblock {\em J. Amer. Math. Soc.}, 28(3):617--667, 2015.

\bibitem[BG08]{sasaki}
Charles~P. Boyer and Krzysztof Galicki.
\newblock {\em Sasakian geometry}.
\newblock Oxford Mathematical Monographs. Oxford University Press, Oxford, 2008.

\bibitem[BJ18]{BJ18v1}
S\'{e}bastien Boucksom and Mattias Jonsson.
\newblock Global pluripotential theory over a trivially valued field.
\newblock {\em \url{arXiv: 1801.08229v1}}, 2018.

\bibitem[BJ20]{BlumJonsson}
Harold Blum and Mattias Jonsson.
\newblock Thresholds, valuations, and {K}-stability.
\newblock {\em Adv. Math.}, 365:107062, 57, 2020.

\bibitem[BJ21]{BJNA1}
S{\'e}bastien Boucksom and Mattias Jonsson.
\newblock A non-{A}rchimedean approach to {K}-stability, {I}: {M}etric geometry of spaces of test configurations and valuations.
\newblock {\em \url{arXiv:2107.11221}. To appear in Ann. Inst. Fourier}, 2021.

\bibitem[BJ22]{BJ18}
S\'{e}bastien Boucksom and Mattias Jonsson.
\newblock Global pluripotential theory over a trivially valued field.
\newblock {\em Ann. Fac. Sci. Toulouse Math. (6)}, 31(3):647--836, 2022.

\bibitem[BJ23]{BJNASyn}
S{\'e}bastien Boucksom and Mattias Jonsson.
\newblock Measures of finite energy in pluripotential theory: A synthetic approach.
\newblock {\em \url{arXiv:2307.01697}}, 2023.

\bibitem[BKMS15]{vallinear}
S\'{e}bastien Boucksom, Alex K\"{u}ronya, Catriona Maclean, and Tomasz Szemberg.
\newblock Vanishing sequences and {O}kounkov bodies.
\newblock {\em Math. Ann.}, 361(3-4):811--834, 2015.

\bibitem[BLQ24]{BLQ}
Harold Blum, Yuchen Liu, and Lu~Qi.
\newblock Convexity of multiplicities of filtrations on local rings.
\newblock {\em Compos. Math.}, 160(4):878--914, 2024.

\bibitem[CLD12]{CLD12}
Antoine Chambert-Loir and Antoine Ducros.
\newblock Formes diff{\'e}rentielles r{\'e}elles et courants sur les espaces de {B}erkovich.
\newblock {\em \url{arXiv:1204.6277}}, 2012.

\bibitem[CS18]{CSIrregular}
Tristan~C. Collins and G\'{a}bor Sz\'{e}kelyhidi.
\newblock K-semistability for irregular {S}asakian manifolds.
\newblock {\em J. Differential Geom.}, 109(1):81--109, 2018.

\bibitem[CS19]{CS19}
Tristan~C. Collins and G\'{a}bor Sz\'{e}kelyhidi.
\newblock Sasaki-{E}instein metrics and {K}-stability.
\newblock {\em Geom. Topol.}, 23(3):1339--1413, 2019.

\bibitem[Fol99]{folland}
Gerrald~B. Folland.
\newblock {\em Real analysis}.
\newblock Pure and Applied Mathematics (New York). John Wiley \& Sons, Int., New York, second edition, 1999.
\newblock Modern techniques and their applications, A Wiley-Interscience Publication.

\bibitem[Gub98]{Gubler}
Walter Gubler.
\newblock Local heights of subvarieties over non-{A}rchimedean fields.
\newblock {\em J. Reine Angew. Math.}, 498:61--113, 1998.

\bibitem[HL21]{HeLi}
Weiyong He and Jun Li.
\newblock Geometric pluripotential theory on {S}asaki manifolds.
\newblock {\em J. Geom. Anal.}, 31(2):1093--1179, 2021.

\bibitem[JM12]{JM12}
Mattias Jonsson and Mircea Musta\c{t}\u{a}.
\newblock Valuations and asymptotic invariants for sequences of ideals.
\newblock {\em Ann. Inst. Fourier (Grenoble)}, 62(6):2145--2209 (2013), 2012.

\bibitem[Kol04]{kollar}
J{\'a}nos Koll{\'a}r.
\newblock Seifert {G}m-bundles.
\newblock {\em \url{arXiv:math/0404386}}, 2004.

\bibitem[Li21]{Li21}
Chi Li.
\newblock Notes on weighted {K}{\"a}hler-{R}icci solitons and application to {R}icci-flat {K}{\"a}hler cone metrics.
\newblock {\em \url{arXiv:2107.02088}}, 2021.

\bibitem[Li22]{Li19}
Chi Li.
\newblock {$G$}-uniform stability and {K}\"{a}hler-{E}instein metrics on {F}ano varieties.
\newblock {\em Invent. Math.}, 227(2):661--744, 2022.

\bibitem[LWX21]{LWX}
Chi Li, Xiaowei Wang, and Chenyang Xu.
\newblock Algebraicity of the metric tangent cones and equivariant {K}-stability.
\newblock {\em J. Amer. Math. Soc.}, 34(4):1175--1214, 2021.

\bibitem[LX14]{LX14}
Chi Li and Chenyang Xu.
\newblock Special test configuration and {K}-stability of {F}ano varieties.
\newblock {\em Ann. of Math. (2)}, 180(1):197--232, 2014.

\bibitem[LX18]{LX18}
Chi Li and Chenyang Xu.
\newblock Stability of valuations: higher rational rank.
\newblock {\em Peking Math. J.}, 1(1):1--79, 2018.

\bibitem[MSY08]{MSY}
Dario Martelli, James Sparks, and Shing-Tung Yau.
\newblock Sasaki-{E}instein manifolds and volume minimisation.
\newblock {\em Comm. Math. Phys.}, 280(3):611--673, 2008.

\bibitem[PS11]{PS}
Lars Petersen and Hendrik S\"{u}ss.
\newblock Torus invariant divisors.
\newblock {\em Israel J. Math.}, 182:481--504, 2011.

\bibitem[RT11]{RossThomas}
Julius Ross and Richard Thomas.
\newblock Weighted projective embeddings, stability of orbifolds, and constant scalar curvature {K}\"{a}hler metrics.
\newblock {\em J. Differential Geom.}, 88(1):109--159, 2011.

\bibitem[Sun23]{Sun23}
Song Sun.
\newblock Bubbling of {K}{\"a}hler-{E}instein metrics.
\newblock {\em \url{arXiv:2303.11309}}, 2023.

\bibitem[Wu22]{wu}
Yueqiao Wu.
\newblock Volume and {M}onge-{A}mp\`ere energy on polarized affine varieties.
\newblock {\em Math. Z.}, 301(1):781--809, 2022.

\bibitem[XZ21]{XuZhuang20}
Chenyang Xu and Ziquan Zhuang.
\newblock Uniqueness of the minimizer of the normalized volume function.
\newblock {\em Camb. J. Math.}, 9(1):149--176, 2021.

\bibitem[Yau78]{Yau78}
Shing~Tung Yau.
\newblock On the {R}icci curvature of a compact {K}\"{a}hler manifold and the complex {M}onge-{A}mp\`ere equation. {I}.
\newblock {\em Comm. Pure Appl. Math.}, 31(3):339--411, 1978.

\end{thebibliography}

\end{document}